\newcommand{\idc}{\mathbf{1}}
\newcommand{\Px}{ \mathbb{P} }
\newcommand{\Qx}{ \mathbb{Q} }
\newcommand{\N}{ \mathbb{N} }
\newcommand{\Ex}{ \mathbb{E} }
\def\esssup_#1{\underset{#1}{\Xi}}
\def\essinf_#1{\underset{#1}{\mathrm{ess\,inf\, }}}
\def\argmax_#1{\underset{#1}{\mathrm{arg\,max\, }}}
\def\argmin_#1{\underset{#1}{\mathrm{arg\,min\, }}}
\newcommand{\Hx}{\mathbb{H}}
\newcommand{\Fx}{\mathbb{F} }
\newcommand{\cO}{\mathcal{O}}
\newcommand{\F}{\mathcal{F}}
\newcommand{\Lip}{{\rm Lip}}
\newcommand{\R}{\mathds{R}}
\newcommand{\Asa}{{\bf(A$_{s1}$)}}
\newcommand{\Asb}{{\bf(A$_{s2}$)}}
\newcommand{\Ath}{{\bf(A$_{\Theta}$)}}
\numberwithin{equation}{section}
\newtheorem{theorem}{Theorem}[section]
\newtheorem{definition}{Definition}[section]
\newtheorem{proposition}[theorem]{Proposition}
\newtheorem{remark}[theorem]{Remark}
\newtheorem{lemma}[theorem]{Lemma}
\newtheorem{corollary}[theorem]{Corollary}
\definecolor{Red}{rgb}{1.00, 0.00, 0.00}
\definecolor{DRed}{rgb}{0.5, 0.00, 0.00}
\definecolor{Blue}{rgb}{0.00, 0.00, 1.00}
\definecolor{Green}{rgb}{0.0, 0.4, 0.0}
\begin{document}

\title{Centralized systemic risk control in the interbank system: Weak formulation and Gamma-convergence}
\author{Lijun Bo \thanks{Email: lijunbo@xidian.edu.cn, School of Mathematics and Statistics, Xidian University, Xi'an 710126, P.R. China.}
\and
Tongqing Li \thanks{Email: ltqing@mail.ustc.edu.cn, School of Mathematics and Statistics, Xidian University, Xi'an 710126, P.R. China.}
\and
Xiang Yu \thanks{Email: xiang.yu@polyu.edu.hk, Department of Applied Mathematics, The Hong Kong Polytechnic University, Hung Hom, Kowloon, Hong Kong.}
}

\date{\vspace{-5ex}}

\maketitle

\begin{abstract}
This paper studies a systemic risk control problem by the central bank, which dynamically plans monetary supply to stabilize the interbank system with borrowing and lending activities. Facing both heterogeneity among banks and the common noise, the central bank aims to find an optimal strategy to minimize the average distance between log-monetary reserves of all banks and the benchmark of some target steady levels. A weak formulation is adopted, and an optimal randomized control can be obtained in the system
with finite banks by applying Ekeland's variational principle. As the number of banks grows large, we prove the convergence of optimal strategies using the Gamma-convergence argument, which yields an optimal weak control in the mean field model. It is shown that this mean field optimal control is associated to the solution of a stochastic Fokker-Planck-Kolmogorov (FPK) equation, for which the uniqueness of the solution is established under some mild conditions.

  \vspace{0.4 cm}

  \noindent{\textbf{AMS subject classifications}:\ \ 93E20, 60H30, 60F05}

  \vspace{0.2 cm}

  \noindent{\textbf{Keywords}:}\ \ Interbank system; weak formulation; mean field control; stochastic FPK equation; Gamma-convergence
\end{abstract}
\vspace{0.5cm}

\section{Introduction}

Systemic risk has been an important topic in quantitative finance and financial economics, which measures the chance that a large-scale of the interconnected system fails entirely or sequentially. It has attracted more attention after the outbreak of the global financial crisis of 2008, and abundant research works can be found in various market models by proposing different types of systemic risk measures and addressing their assessment methods, stability analysis and regulations. A comprehensive introduction to recent developments in systemic risk can be found in \cite{ForqueLangsam13}. Due to the complex risk exposures of the interacting bank network, the study of systemic risk in the interbank system has become one main focus in academic research. To describe the interacting bank system, \cite{FouqueSun13} propose the log-monetary reverse model for banks, in which the interactions among banks occur in a way that each reserve process mean-reverts to the average of the entire system with the same borrowing and lending rate.  Later, \cite{Granier-P-Y13} extend this model by allowing each bank to have two stable states, namely one normal state and one failure state. It is shown that the stronger cooperation increases the stability of the individual bank but also the overall systemic risk. \cite{BoCapponi15} study the similar systemic risk when monetary reserves are governed by a jump-diffusion system, in which the interactions among banks come from both the mean field interaction and the sensitivity of each entity via the banking sector indicator. Recently, \cite{Capponi-S-Y20} further propose a refined model when banks are organized into clusters to dynamically adjust their monetary reserves to meet some prescribed reserve requirements. In particular, to understand the systemic risk in a large interbank network, \cite{Capponi-S-Y20} characterize the asymptotic behaviour of the empirical measures of monetary reserve processes when the number of banks tends to infinity.

On the other hand, in addition to purely investigating various systemic risk measures for the interbank system, it is also of great importance to maintain the dynamic system in some steady states by managing the systemic risk actively. In practice, some financial activities such as adjustments in borrowing and lending may help to reduce the ripple effect of massive failures. Along this direction, some existing works have formulated and studied the dynamic borrowing and lending control by each component bank to minimize the systemic risk. For example, \cite{Carmona-F-S15} derive the explicit closed-loop and open-loop Nash equilibria for the mean field game (MFG) in the context of a large interbank system, based on the methodology developed in the seminal works of \cite{LasryLions07} and \cite{Huang-M-C06}. In a similar fashion, \cite{Sun18} obtains a closed-form Markov Nash equilibrium in a mean field coupled Feller's monetary reserves model with a unique idiosyncratic noise. Recently, \cite{Sun22} studies a system of heterogeneous lending and borrowing based on the relative average of log-capitalization given by the linear combination of the average within groups and ensemble average. The existence of closed-loop and open-loop Nash equilibria for the two-group case and the existence of an approximate Nash equilibrium of the MFGs in $d$ heterogeneous groups are respectively established.

As opposed to previous studies, we are interested in providing a new perspective on systemic risk control by focusing on the role of the central bank. We formulate a tractable centralized monetary supply problem by the central bank, which aims to stabilize the operation of the entire system by planning the monetary supply such as the rate of money creation or the supply of liquidity. To attain the goal of stabilization, it is assumed that the central bank initially sets up some ideal steady reverse levels for all component banks as some tracking benchmarks to exercise its monetary policy. The interactions between banks are captured by fixed borrowing and lending activities with a common noise disturbing the system. The specific systemic risk is then measured by the average distance between the log-monetary reserves and the target steady reserve levels of component banks. The mathematical task of the central bank is to adjust the monetary supply, subjecting to some quadratic control costs, such that the described systemic risk is minimized.

Moreover, motivated by the large population of commercial banks in the interbank network, we are particularly interested in the mean field model when there are infinitely many banks. It is well known that the common noise and the centralized control feature complicate the analysis of the associated McKean-Vlasov control problem; see some related work in \cite{Carmona-D-L13}, \cite{Pham16} and \cite{MottePham2021}. Furthermore, it is a challenging task to prove the strong convergence of optimal controls from the system with $N$ banks to its mean field counterpart. As a remedy, we choose to work in the weak formulation; see \cite{ElKaroui-N-J87}, \cite{Bahlali-D-M07}, \cite{AhmedCharalambous17}, \cite{Lacker16} and \cite{Barbu-R-Z18}. Our mathematical contribution is twofold. Firstly, we attempt a control problem of money supply by the central bank and establish the equivalence between the strong formulation (see problem \eqref{eq:control-problem}) and the weak formulation (see problem \eqref{eq:objectfuncRex}) with $N$ banks. In particular, we deduce the existence of an optimal weak control using the Ekeland's variational principle; see Proposition \ref{prop:ex-op-relaxed} and Propostion \ref{prop:optimal-existence}. Secondly, to examine the mean field model with infinite banks, we apply the Gamma-convergence arguments and conclude the convergence of optimizers in the Wasserstein metric (see \cite{Villani03}) from the model with $N$ heterogeneous banks when $N$ grows to infinity, which is new to the literature. The limiting weak control is shown to be the optimal solution in the mean field model that is related to the solution of a stochastic FPK equation; see Theorem \ref{thm:MainThm}. We also obtain a technical auxiliary result on the uniqueness of the solution to the stochastic FPK equation in Proposition \ref{prop:conv-3}, which guarantees the uniqueness of the limiting value functional in the mean field model.

The rest of the paper is organized as follows. In Section \ref{sec:themodel}, we first introduce the conventional strong formulation and its equivalent weak formulation of the inter-bank controlled system with finite number of banks. In Section \ref{sec:lim-ch}, we investigate the weak formulation of the mean field control problem of the sizable interbank system with infinitely many banks. In Section~\ref{sec:gamma-conv}, we show the convergence of the optimal value functionals and the associated minimizers by means of the Gamma-convergence arguments. It is proved that the limiting point is a weak optimal control in the mean field model. The proofs of some auxiliary results are collected in Appendix~\ref{app:proof1}.

\section{Model Setup and Problem Formulation}\label{sec:themodel}

\subsection{The financial model with $N$ banks}

Let $(\Omega,\F,\Px)$ be a complete probability space with the filtration $\Fx=(\F_t)_{t\in[0,T]}$ satisfying the usual conditions, where $T>0$ refers to the finite time horizon. We consider an interbank system consisting of $N$ heterogeneous banks with lending and borrowing interactions. To be precise, let us first introduce the following notations:
\begin{itemize}
    \item $a_i>0$ denotes the heterogenous constant rate of borrowing and lending of the bank $i$;
    \item $\sigma_i>0$ denotes the volatility of the idiosyncratic noise of the bank $i$;
    \item $u_i>0$ denotes the heterogenous constant intensity of monetary supply to the bank $i$;
    \item $\sigma_0\geq0$ denotes the volatility of the common noise of the interbank system;
    \item $(\theta_t)_{t\in[0,T]}$ is an $\Fx$-adapted process that captures the normalized rate of the log-monetary supply to the interbank system controlled by the central bank.
\end{itemize}
For $i=1,\ldots, N$, the log-monetary reserve of the bank $i$ in the system is assumed to satisfy the SDE that
\begin{align}\label{eq:systemmodel}
\begin{cases}
\displaystyle dX^{\theta,i}_t =  \frac{a_i}{N}\sum_{j=1}^N(X^{\theta,j}_t-X^{\theta,i}_t)dt + u_i\theta_tdt + \sigma_idW^i_t+\sigma_0dW^0_t,\quad t\in(0,T],\\[1.5em]
\displaystyle X^{\theta,i}_0=X^i_0,
\end{cases}
\end{align}
where $W^j=(W_t^j)_{t\in[0,T]}$, $j=0,1,\ldots,N$, are $N+1$ independent $\Fx$-Brownian motions and $W^0$ stands for the common noise affecting the interbank system. Similar to \cite{Carmona-F-S15} and \cite{Sun18}, the default risk is not considered in the present paper. 

As documented in the existing literature (see, e.g. \cite{Blinder08}), various tools have been used by central banks to control the money supply including setting bank reserve requirements, influencing interest rates, printing money, open market operations, and etc. Among them, setting minimum reserve requirements for domestic banks has been an important instrument in regulating the money circulation and stabilizing the economy, see \cite{Gray11} and \cite{Chang2019}. On the other hand, as discussed in \cite{J12}, some unconventional tools have been employed recently in monetary policies. For example, some target steady reserve levels resulting from the target inflation rates or target interest rates have been considered in recent studies such that falling below target reserve levels triggers the injection or lending of money by the central bank, see \cite{FatoneMariani19} and \cite{Capponi-S-Y20}. In the present paper, we also consider the target steady reserve level of the bank $i$ (higher than the conventional minimum reserve level), denoted by the $\mathcal{F}_0$-measurable and square integrable random variable $Y^i$. However, for tractability, we set each target level $Y^i$ as a benchmark into the measure of systemic risk instead of incorporating it directly into the reserve dynamics as in \cite{FatoneMariani19} and \cite{Capponi-S-Y20}. As the target steady reverse levels are defined in the objective functional, the optimal money supply is exercised by the central bank such that all bank reserves evolve closely around these ideal and steady reserve levels, resulting in a desired stable economy.

Let $\mathbf{X}^{\theta,N}_t:=(X^{\theta,1}_t,\ldots,X^{\theta,N}_t)^\top$, $t\in[0,T]$, and $\mathbf{Y}^N:=(Y^1,\ldots,Y^N)^\top$. The terminal cost function $L_N:\R^N\times\R^N\to\R$ and the running cost function $R_N:\R^N\times\R^N\times\R\to\R$ are defined respectively by
\begin{equation}\label{eq:LNRN}
  L_N(\mathbf{x},\mathbf{y}):=\frac{\alpha}{N}\sum_{i=1}^NL(x_i,y_i),\quad R_N(\mathbf{x},\mathbf{y};\theta):=\frac{\beta}{N}\sum_{i=1}^NL(x_i,y_i)+\lambda \theta^2,
\end{equation}
where $\alpha,\beta,\lambda>0$ are weighting parameters of different costs, ${\bf x}:=(x_1,\ldots,x_N)^\top\in\R^N$, ${\bf y}:=(y_1,\ldots,y_N)^\top\in\R^N$, and the distance function $L(x_i,y_i):=|x_i-y_i|^2$. In particular, the term $\lambda\theta^2$ in $R_N(\mathbf{x},\mathbf{y};\theta)$ stands for the quadratic cost of the control effort. That is, the central bank aims to dynamically steer the controlled log-monetary reserves of all banks as close as possible to the benchmark steady levels when the control effort is costly. Under each control $\theta=(\theta_t)_{t\in[0,T]}$, the objective functional of the central bank is then defined by
\begin{align}\label{eq:objectfunc}
J_N(\theta)=\Ex\left[L_N(\mathbf{X}^{\theta,N}_T,\mathbf{Y}^N)+\int_0^T R_N(\mathbf{X}^{\theta,N}_t,\mathbf{Y}^N;\theta_t)dt\right].
\end{align}


For the future formulation in the limiting model, let us define the type vector of the interbank system \eqref{eq:systemmodel} by
\begin{align}\label{eq:parameters}
\xi^i:=(a_i,u_i,\sigma_i)^\top\in\mathcal{O}:=\R_+^3.
\end{align}
Let $\Theta\subset\R$ be the given policy space. We denote $\mathbb{H}^2$ the space of all $\mathbb{F}$-adapted and real-valued processes $\theta=(\theta_t)_{t\in[0,T]}$ satisfying $\Ex[\int_0^T|\theta_t|^2dt]<\infty$. The set of admissible controls, denoted by $\mathbb{U}^{\Px,\Fx}$, contains all processes $\theta\in \mathbb{H}^2$ such that $\theta_t\in\Theta$, $\Px(d\omega)\otimes dt\text{-a.s.}$. We also endow the set $\mathbb{U}^{\Px,\Fx}$ with the norm $\|\theta\|_{\Hx^2}:=\sqrt{\Ex[ \int_0^T|\theta_t|^2dt]}$.
Given the admissible control set $\mathbb{U}^{\Px,\Fx}$, the centralized optimal stabilization problem by the central bank is given by
\begin{align}\label{eq:control-problem}
\inf_{\theta\in\mathbb{U}^{\Px,\Fx}}J_N(\theta).
\end{align}
We refer \eqref{eq:control-problem} as the strong formulation of the optimal 
control problem. 
The following assumptions are imposed throughout the paper:
\begin{itemize}
\item[{\Asa}] There exists a global finite constant $K$ such that $|\xi^i| \leq K$ for all $i\geq1$, and $\sup_{i\in\N}\Ex[|X_0^i|^{2+\varrho}]\leq K$ for some $\varrho>0$.
\item[{\Ath}] The policy space $\Theta\subset\R$ is a (nonempty) compact and convex set.
\end{itemize}

\begin{remark}
The motivation of the problem \eqref{eq:control-problem} is to provide a new perspective on the interbank systemic risk, featuring the authority role of the central bank in stabilizing the operation with some benchmark reserve targets. However, it is well understood that the monetary policy by the central bank is a challenging financial and economic issue involving many aspects of the domestic market and sophisticated decisions by commercial banks. The simple model and objective functional in \eqref{eq:control-problem} only serve as a first step to understand the decision making by the central bank and develop some tools to investigate more realistic models and other systemic risk measures in future studies. For example, by incorporating active borrowing and lending decisions by component banks, we may formulate a Stackelberg mean field game problem and generalize the methodology in the present paper when the central bank can intervene the interactions of component banks to keep the domestic economy stable and growing.
\end{remark}

\subsection{Optimal control under strong formulation}\label{subsec:op-con}

This section establishes the well-posedness of the strong optimal 
control problem \eqref{eq:control-problem} with finite banks. Let us first rewrite the system \eqref{eq:systemmodel} in a compact form that
\begin{equation}\label{eq:finite-x}
 d\mathbf{X}^{\theta}_t=\mathbf{b}(\mathbf{X}^{\theta}_t,\theta_t)dt + \Sigma d\mathbf{W}_t^0,\ \ t\in[0,T],
\end{equation}
where $\mathbf{X}_t^{\theta} =(X^{\theta,1}_t,\ldots,X^{\theta,N}_t)^\top$, $\mathbf{W}_t^0 := (W_t^0,W_t^1,\ldots,W_t^N)^\top$, and the controlled drift term is given by $ \mathbf{b}(\mathbf{X}_t^{\theta},\theta_t):=A\mathbf{X}_t^{\theta}+\theta_t\mathbf{u}$, where
{\small
\begin{equation}\label{eq:coeff-b}\begin{aligned}
A:=& \frac{1}{N}\left[\begin{array}{cccc}
   (1-N)a_1 &    a_1   & \cdots &    a_1\\
      a_2   & (1-N)a_2 & \cdots &    a_2\\
    \vdots  &  \vdots  & \ddots &  \vdots\\
      a_N   &    a_N   & \cdots & (1-N)a_N
 \end{array}\right],
\end{aligned}
\end{equation}
}
and ${\bf u}:=(u_1,\ldots,u_N)^{\top}$. The volatility matrix is given by
\begin{align}\label{eq:volSigma}
  \Sigma:= \left[
  \begin{array}{ccccc}
    \sigma_0 & \sigma_1 &    0     & \cdots & 0  \\
    \sigma_0 &    0     & \sigma_2 & \cdots & 0\\
    \vdots   & \vdots   & \vdots   & \ddots &\vdots \\
    \sigma_0 &    0     &    0     & \cdots & \sigma_N
  \end{array}
\right]_{N\times(N+1)}.
\end{align}
For ease of presentation, we omit the dependence of $A,{\bf b},{\bf u},\Sigma,{\bf X}^{\theta},{\bf W}^0$ on the number $N$.

We first present some technical results on moment estimations of the state process and its proof is given in Appendix \ref{app:proof1}.
\begin{lemma}\label{lem:estimates}
Let assumptions {\Asa} and {\Ath} hold. For any $\delta>0$, there exists a constant $D_1>0$ depending on $K$, $\Theta$ and $p$ only such that
\begin{equation}\label{eq:x-bound}
  \sup_{\theta\in\mathbb{U}^{\Px,\Fx}}\sup_{i\in\N}\Ex\left[\sup_{t\in[0,T]}\left|X_t^{\theta,i}\right|^{2+\varrho}\right]\leq D_1,\quad \lim_{\delta\to0}\sup_{\theta\in\mathbb{U}^{\Px,\Fx}}\sup_{i\in\N}\Ex\left[ \sup_{|t-s|\leq\delta} \left|X^{\theta,i}_t-X^{\theta,i}_s\right|^{2} \right]=0,
\end{equation}
where $\varrho>0$ is given in the assumption {\Asa}.
\end{lemma}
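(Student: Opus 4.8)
The plan is to establish the two bounds in \eqref{eq:x-bound} by standard SDE a priori estimates applied to the compact system \eqref{eq:finite-x}, with careful attention to getting constants that are uniform in both $\theta\in\mathbb{U}^{\Px,\Fx}$ and $i\in\N$. The key structural facts I would exploit are: (i) by assumption \Asa, the type vectors $\xi^i=(a_i,u_i,\sigma_i)^\top$ are bounded by $K$ uniformly in $i$, so $|a_i|,|u_i|,|\sigma_i|\le K$; (ii) by assumption \Ath, $\Theta$ is compact, hence $|\theta_t|\le C_\Theta$ for a deterministic constant $C_\Theta$ depending only on $\Theta$, for every admissible $\theta$; and (iii) $\sup_{i}\Ex[|X_0^i|^{2+\varrho}]\le K$. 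Write $p:=2+\varrho$.

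For the first bound, I would work directly with the scalar equation \eqref{eq:systemmodel}. The drift of $X^{\theta,i}_t$ is $\frac{a_i}{N}\sum_{j=1}^N(X^{\theta,j}_t-X^{\theta,i}_t)+u_i\theta_t$, whose absolute value is bounded by $a_i\bigl(\frac1N\sum_j|X^{\theta,j}_t|+|X^{\theta,i}_t|\bigr)+u_i|\theta_t|\le K\bigl(\frac1N\sum_j|X^{\theta,j}_t|+|X^{\theta,i}_t|\bigr)+KC_\Theta$. Apply Itô's formula to $|X^{\theta,i}_t|^{p}$ (or first to $(\varepsilon+|X^{\theta,i}_t|^2)^{p/2}$ and let $\varepsilon\downarrow0$ if one wants to avoid non-smoothness at the origin), take $\sup_{s\le t}$, then expectations; handle the stochastic integral term by the Burkholder--Davis--Gundy inequality and absorb the resulting term using Young's inequality. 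This yields, after summing over $i$ and dividing by $N$ to control the mean-field term $\frac1N\sum_j|X^{\theta,j}_s|^{p}$,
\[
\Ex\Bigl[\sup_{s\le t}\tfrac1N\sum_{i=1}^N|X^{\theta,i}_s|^{p}\Bigr]\le C_1+C_2\int_0^t\Ex\Bigl[\sup_{r\le s}\tfrac1N\sum_{i=1}^N|X^{\theta,i}_r|^{p}\Bigr]ds,
\]
with $C_1,C_2$ depending only on $K$, $\Theta$, $p$, $T$ (the $\sigma_0$ and $\sigma_i$ contributions are bounded by $K$); Grönwall's inequality then bounds $\Ex[\sup_{s\le T}\frac1N\sum_i|X^{\theta,i}_s|^{p}]$. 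A parallel and slightly simpler estimate applied to a single index $i$ (using the already-obtained bound on the empirical $p$-th moment to control the interaction term) gives the pointwise-in-$i$ bound $\sup_i\Ex[\sup_{t\le T}|X^{\theta,i}_t|^{p}]\le D_1$, uniformly in $\theta$. Taking the supremum over $\theta\in\mathbb{U}^{\Px,\Fx}$ on the left is harmless since $D_1$ does not depend on $\theta$.

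For the second bound, fix $s\le t$ with $t-s\le\delta$ and write
\[
X^{\theta,i}_t-X^{\theta,i}_s=\int_s^t\Bigl(\tfrac{a_i}{N}\textstyle\sum_{j=1}^N(X^{\theta,j}_r-X^{\theta,i}_r)+u_i\theta_r\Bigr)dr+\sigma_i(W^i_t-W^i_s)+\sigma_0(W^0_t-W^0_s).
\]
Estimate each piece: the drift integral contributes at most $\delta\cdot K\bigl(\frac1N\sum_j\sup_{r\le T}|X^{\theta,j}_r|+\sup_{r\le T}|X^{\theta,i}_r|+C_\Theta\bigr)$ in $L^2$ after using the first bound and Jensen, which is $O(\delta)$ uniformly in $\theta$ and $i$; the two Brownian increments contribute $(\sigma_i^2+\sigma_0^2)\delta\le 2K^2\delta$ in $L^2$. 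Taking $\sup_{|t-s|\le\delta}$ requires a bit more care for the drift and martingale terms — I would either use a chaining/modulus-of-continuity argument for the Brownian motions (the modulus of continuity of Brownian motion on $[0,T]$ has an $L^2$ bound that vanishes with $\delta$, uniformly, since $W^0,W^i$ do not depend on $\theta$ or on the bank index in law) or, more cleanly, bound $\sup_{|t-s|\le\delta}|X^{\theta,i}_t-X^{\theta,i}_s|^2\le C\delta^2\sup_{r\le T}(\cdots)^2+4\sup_{|t-s|\le\delta}|\sigma_i(W^i_t-W^i_s)+\sigma_0(W^0_t-W^0_s)|^2$ and note the last term has expectation bounded by $8K^2\,\Ex[\sup_{|t-s|\le\delta}(|W^i_t-W^i_s|^2+|W^0_t-W^0_s|^2)]$, which tends to $0$ as $\delta\to0$ by dominated convergence (the $\sup$ is bounded by the $L^2$ modulus, which is integrable and goes to $0$ pointwise), uniformly in $i$ and $\theta$. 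Combining gives the second claim.

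**Main obstacle.** The only real subtlety is keeping every constant genuinely uniform in $\theta$ \emph{and} in $i$ simultaneously: the interaction coefficient matrix $A$ has operator norm that does not blow up with $N$ (each row sums to $0$ and entries are $O(a_i/N)$ with at most one $O(a_i)$ diagonal entry), so one must argue through the empirical $p$-th moment rather than naively bounding $\|A\|$, and one must use compactness of $\Theta$ to make the $u_i\theta_t$ term bounded uniformly over all admissible controls rather than merely square-integrable. Once the empirical-moment Grönwall estimate is in place, the passage to the pointwise-in-$i$ bound and to the modulus-of-continuity estimate is routine.
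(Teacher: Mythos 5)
Your proposal is correct and follows essentially the same route as the paper: sum the $p$-th moment estimate over $i$ and divide by $N$ to control the empirical moment via Gr\"onwall, plug this back in to get the bound uniform in $i$ and $\theta$, and then handle the increments by an $O(\delta)$ drift bound plus the vanishing $L^2$ modulus of continuity of the Brownian motions (dominated convergence with Doob's inequality). The only cosmetic difference is that you invoke It\^o's formula and BDG for the first bound, whereas the paper avoids this entirely because the noise is additive with constant coefficients, so the terms $\sigma_i W^i_t+\sigma_0 W^0_t$ can be bounded directly and Doob's maximal inequality on $\sup_t|W_t|^{2+\varrho}$ suffices.
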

As a preparation, let us introduce a parameterized HJB equation of $V(t,\mathbf{x};\mathbf{y})$ for $(t,\mathbf{x})\in[0,T]\times\R^N$ and the parameter $\mathbf{y}\in\R^N$ that
\begin{equation}\label{eq:HJB}
\left\{\begin{aligned}
  &-\frac{\partial V}{\partial t}(t,\mathbf{x};\mathbf{y}) -\mathcal{H}(t,\mathbf{x},\nabla_{\mathbf{x}}V(t,\mathbf{x};\mathbf{y}), \nabla_{\mathbf{x}}^2V(t,\mathbf{x};\mathbf{y});\mathbf{y})=0,\\[0.4em]
  &~V(T,\mathbf{x};\mathbf{y})=L_N(\mathbf{x},\mathbf{y}),
\end{aligned}\right.
\end{equation}
where the parameterized Hamiltonian is defined by, for $(t,\mathbf{x})\in[0,T]\times\R^{N}$, $\mathbf{p}\in\R^N$, $M\in\R^{N\times N}$ and the parameter  $\mathbf{y}\in\R^N$,
\[\mathcal{H}(t,\mathbf{x},\mathbf{p},M;\mathbf{y}) :=\inf_{\theta\in\Theta}\left\{\mathbf{b}(\mathbf{x},\theta)^{\top}\mathbf{p} +\frac{1}{2}\mathrm{tr}(\Sigma\Sigma^\top M)+R_N(\mathbf{x},\mathbf{y};\theta)\right\}.\]
The next auxiliary result gives the well-posedness of the HJB equation, and its proof is deferred to Appendix \ref{app:proof1}.

\begin{lemma}\label{lem:wellpo-HJB}
Let assumptions {\Asa} and {\Ath} hold. For any $\mathbf{y}\in\R^N$, the parameterized HJB equation \eqref{eq:HJB} has a unique classical solution $V(\cdot,\cdot;\mathbf{y})\in C^{1,2}([0,T)\times\R^N)\cap C([0,T]\times\R^N)$. Moreover, it also satisfies the quadratic growth condition, i.e., there exists $D_2>0$ such that $|V(t,\mathbf{x};\mathbf{y})|\leq D_2(1+|\mathbf{x}|^2)$.
\end{lemma}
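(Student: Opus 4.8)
The plan is to reduce \eqref{eq:HJB} to a genuinely (nondegenerately) parabolic semilinear terminal value problem with a gradient‑Lipschitz nonlinearity, to produce a quadratic‑growth solution by a stochastic control / Feynman--Kac representation, and then to bootstrap interior regularity and settle uniqueness in the quadratic‑growth class. Since $\mathbf{b}(\mathbf{x},\theta)=A\mathbf{x}+\theta\mathbf{u}$ and $R_N(\mathbf{x},\mathbf{y};\theta)=\frac{\beta}{N}\sum_{i=1}^N|x_i-y_i|^2+\lambda\theta^2$ by \eqref{eq:LNRN}, only $\lambda\theta^2+\theta(\mathbf{u}^\top\mathbf{p})$ depends on the minimizing variable, so that
\[
\mathcal{H}(t,\mathbf{x},\mathbf{p},M;\mathbf{y})=(A\mathbf{x})^\top\mathbf{p}+\tfrac12\,\mathrm{tr}(\Sigma\Sigma^\top M)+\tfrac{\beta}{N}\sum_{i=1}^N|x_i-y_i|^2+G(\mathbf{u}^\top\mathbf{p}),
\qquad G(z):=\inf_{\theta\in\Theta}\bigl\{\lambda\theta^2+\theta z\bigr\}.
\]
Using {\Ath}, the infimum is attained at the projection $\theta^\ast(z)=\mathrm{Proj}_\Theta(-z/(2\lambda))$; hence $G$ is concave, of linear growth, globally Lipschitz with constant $\sup_{\theta\in\Theta}|\theta|$, and in fact $G\in C^{1,1}(\R)$ with $G'=\theta^\ast$. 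Thus the nonlinear term $G(\mathbf{u}^\top\nabla_{\mathbf{x}}V)$ is $C^{1,1}$ in the gradient, the transport coefficient $A\mathbf{x}$ is affine, and the diffusion matrix is the constant matrix $\Sigma\Sigma^\top=\sigma_0^2\,\mathbf{1}\mathbf{1}^\top+\mathrm{diag}(\sigma_1^2,\dots,\sigma_N^2)$, which satisfies $v^\top\Sigma\Sigma^\top v\ge(\min_{1\le i\le N}\sigma_i^2)|v|^2$; by {\Asa} and $\sigma_i>0$ this lower bound is strictly positive, so \eqref{eq:HJB} is uniformly (nondegenerate) parabolic.

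For existence and the growth bound I would take $V(t,\mathbf{x};\mathbf{y})$ to be the value function of the auxiliary stochastic control problem $\inf_{\theta\in\mathbb{U}^{\Px,\Fx}}\Ex[L_N(\mathbf{X}^{t,\mathbf{x},\theta}_T,\mathbf{y})+\int_t^T R_N(\mathbf{X}^{t,\mathbf{x},\theta}_s,\mathbf{y};\theta_s)ds]$, with $\mathbf{X}^{t,\mathbf{x},\theta}$ solving \eqref{eq:finite-x} from $(t,\mathbf{x})$; equivalently, one may introduce the reference linear SDE $d\bar{\mathbf{X}}_s=A\bar{\mathbf{X}}_s\,ds+\Sigma\,d\mathbf{W}^0_s$ (which has moments of every order) together with the Markovian BSDE having terminal value $L_N(\bar{\mathbf{X}}_T,\mathbf{y})$ and driver $\frac{\beta}{N}\sum_i|\bar X^i_s-y_i|^2+G(\mathbf{u}^\top(\Sigma\Sigma^\top)^{-1}\Sigma\,z)$, which is Lipschitz in $z$ and of quadratic growth in the state, so the standard theory of Lipschitz BSDEs gives a unique solution whose $Y$‑component is a deterministic function of $(s,\bar{\mathbf{X}}_s)$. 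The dynamic programming principle (the infimum is attained since $\Theta$ is compact) shows this function is a viscosity solution of \eqref{eq:HJB}, all terms are nonnegative so $V\ge0$, and moment estimates of the type in Lemma~\ref{lem:estimates} applied with a fixed constant control $\bar\theta\in\Theta$, together with the quadratic form of $L$ and Gronwall's inequality, give $V(t,\mathbf{x};\mathbf{y})\le D_2(1+|\mathbf{x}|^2)$. To upgrade to a classical solution I would localize: smoothly approximate $G$, solve the Dirichlet problems on cylinders $[0,T)\times B_R$, apply interior $L^p$ and Schauder estimates — available because the operator is uniformly parabolic with coefficients smooth in $t$, polynomial in $\mathbf{x}$, and $C^{1,1}$ in the gradient slot — to obtain local $C^{1+\alpha/2,\,2+\alpha}$ bounds independent of $R$, and pass to the limit using the quadratic‑growth a priori bound; this yields $V(\cdot,\cdot;\mathbf{y})\in C^{1,2}([0,T)\times\R^N)\cap C([0,T]\times\R^N)$ with $V(T,\cdot;\mathbf{y})=L_N$.

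Uniqueness within the quadratic‑growth class follows either from a verification/comparison argument for \eqref{eq:HJB}, obtained by testing the difference of two solutions against a weight such as $e^{\mu t}(1+|\mathbf{x}|^2)$ with $\mu$ large enough to absorb the affine drift and the Lipschitz nonlinearity and to annihilate the boundary contributions at spatial infinity, or from the uniqueness of the BSDE solution above combined with its Feynman--Kac identification. The main obstacle is precisely the combination of the unbounded state space $\R^N$, the linearly growing drift $A\mathbf{x}$, and the quadratically growing terminal datum $L_N$: existence, regularity and uniqueness must all be carried out inside the weighted (quadratic‑growth) class rather than quoted from textbook statements on bounded domains. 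What makes this feasible are the two structural features isolated in the first step — the nondegeneracy of $\Sigma\Sigma^\top$, which delivers classical interior regularity, and the fact that, because $\Theta$ is compact and the control is penalized quadratically, the Hamiltonian is not merely Lipschitz but $C^{1,1}$ in $\nabla_{\mathbf{x}}V$.
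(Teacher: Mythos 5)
Your argument is correct in outline, but it takes a genuinely different and much more self-contained route than the paper's. The paper's proof is a verification-of-hypotheses argument: after observing, exactly as you do, that $\Sigma\Sigma^\top$ is strictly positive definite (so one may write $\Sigma\Sigma^\top=DD^\top$ with $D$ invertible and recast the drift as $\mathbf{b}(\mathbf{x},\theta)=\tilde{\mathbf{b}}(\mathbf{x})+D\kappa(\theta)$ with $\kappa(\theta)=D^{-1}\theta\mathbf{u}$), it simply checks that the action space $\Theta$ is compact, that $\nabla_{\mathbf{x}}\tilde{\mathbf{b}}$ is bounded, and that $L_N$ and $R_N$ are quadratic in $\mathbf{x}$, and then quotes Theorem VI.6.2 of \cite{FlemingRishel75}, which delivers in one stroke the unique classical solution with quadratic growth. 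You instead reprove (a special case of) that theorem from scratch: you compute the Hamiltonian explicitly through $G(z)=\inf_{\theta\in\Theta}\{\lambda\theta^2+\theta z\}$, correctly identify $G\in C^{1,1}$ with $G'(z)=\Pi_\Theta(-z/(2\lambda))$ (the same projection structure that the paper uses later in \eqref{eq:optimalMar}), exploit uniform parabolicity of $\Sigma\Sigma^\top=\sigma_0^2\mathbf{1}\mathbf{1}^\top+\mathrm{diag}(\sigma_1^2,\ldots,\sigma_N^2)$, construct the candidate solution by DPP or a Lipschitz Markovian BSDE (your driver rewriting via $\mathbf{u}^\top(\Sigma\Sigma^\top)^{-1}\Sigma z$ is the right way to recover $\mathbf{u}^\top\nabla_{\mathbf{x}}V$ from the martingale integrand), obtain the quadratic bound from moment estimates with a constant control (note your $D_2$ then also depends on $\mathbf{y}$, which is consistent with the parameterized statement), bootstrap interior $W^{2,1}_p$/Schauder regularity, and settle uniqueness by verification or comparison in the quadratic-growth class. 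What the paper's route buys is brevity and a single citation; what yours buys is transparency and independence from the exact hypotheses of the quoted theorem, at the price of several steps asserted as standard (uniform-in-$R$ interior estimates, passage to the limit, continuity at $t=T$ with terminal datum $L_N$) that would have to be written out in full for a complete proof, though none of them hides a genuine obstruction in this uniformly parabolic, gradient-Lipschitz setting.
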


By applying Remark IV.3.2 (Chapter IV) of \cite{FlemingSoner06} with Lemma \ref{lem:wellpo-HJB} and the assumption {\Ath}, there exists a continuous function $f^*:[0,T]\times\R\to\Theta$ such that
\begin{align}\label{eq:optimalMar}
f^*(t,\mathbf{x};\mathbf{y})&:=\mathop{\arg\min}_{\theta\in\Theta}\left\{\mathbf{b}(\mathbf{x},\theta)^{\top}\nabla_{\mathbf{x}}V(t,\mathbf{x};\mathbf{y}) +\frac{1}{2}\mathrm{tr}(\Sigma\Sigma^\top \nabla_{\mathbf{x}}^2V(t,\mathbf{x};\mathbf{y}))+R_N(\mathbf{x},\mathbf{y};\theta)\right\}\nonumber\\
&=\Pi_{\Theta}\left(-\frac{1}{2\lambda}\sum_{j=1}^{N}u_j\frac{\partial V(t,\mathbf{x};\mathbf{y})}{\partial x_j}\right),\quad  (t,\mathbf{x})\in[0,T]\times\R^N,
\end{align}
where $V(t,\mathbf{x};\mathbf{y})$ is the unique classical solution of the parameterized HJB equation \eqref{eq:HJB}, and $\Pi_\Theta$ denotes the usual (single-valued) Euclidean projection, i.e., for any $x\in\R$, $\Pi_{\Theta}(x)$ is the unique point such that $|x-\Pi_{\Theta}(x)|=d_{\Theta}(x):=\inf_{y\in\Theta}|x-y|$.

The next result readily follows.
\begin{lemma}\label{lem:ex-op-strict}
Let $f^*(t,\mathbf{x};\mathbf{y})$ be given by \eqref{eq:optimalMar}. Then, the following SDE:
\begin{equation}\label{eq:Op-SDE}
\mathbf{X}^{*}_t=\mathbf{X}_0+\int_0^t\mathbf{b}(\mathbf{X}^{*}_s,f^*(s,\mathbf{X}^{*}_s;\mathbf{Y}))ds + \Sigma \mathbf{W}^0_t,~~t\in[0,T]
\end{equation}
has a unique strong solution. Moreover, let $\theta^*_t=f^*(t,\mathbf{X}^{*}_t;\mathbf{Y})$ for $t\in[0,T]$, then $\theta^*=(\theta^*_t)_{t\in[0,T]}\in\mathbb{ U}^{\Px,\Fx}$ is the optimal control of \eqref{eq:control-problem}, i.e., for any $\theta\in\mathbb{U}^{\Px,\Fx}$, it holds that
\[\Ex[V(0,\mathbf{X}_0;\mathbf{Y})]=J_N(\theta^*)\leq J_N(\theta).\]
\end{lemma}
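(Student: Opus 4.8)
The plan is a standard verification (dynamic programming) argument in three steps, relying on Lemmas~\ref{lem:estimates} and~\ref{lem:wellpo-HJB}.

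\emph{Well-posedness of \eqref{eq:Op-SDE} and admissibility of $\theta^*$.} First I would check that the feedback drift $\mathbf{x}\mapsto\mathbf{b}(\mathbf{x},f^*(t,\mathbf{x};\mathbf{Y}))$ is continuous in $(t,\mathbf{x})$ and locally Lipschitz in $\mathbf{x}$: indeed $\mathbf{b}(\mathbf{x},\theta)=A\mathbf{x}+\theta\mathbf{u}$ is affine, the Euclidean projection $\Pi_\Theta$ is $1$-Lipschitz since $\Theta$ is convex by {\Ath}, and $\nabla_{\mathbf{x}}V(t,\cdot;\mathbf{Y})$ is continuously differentiable, hence locally Lipschitz, by the $C^{1,2}$-regularity from Lemma~\ref{lem:wellpo-HJB}. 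Moreover, since $\Theta$ is compact, $f^*$ is bounded, so $|\mathbf{b}(\mathbf{x},f^*(t,\mathbf{x};\mathbf{Y}))|\le\|A\|\,|\mathbf{x}|+\big(\sup_{\theta\in\Theta}|\theta|\big)|\mathbf{u}|$ has linear growth in $\mathbf{x}$; together with the constant diffusion $\Sigma$, classical SDE theory (local Lipschitz plus linear growth) then yields a unique non-exploding strong solution $\mathbf{X}^*$ on $[0,T]$. For admissibility, $\theta^*_t=f^*(t,\mathbf{X}^*_t;\mathbf{Y})\in\Theta$ by definition of the projection, $\theta^*$ is $\Fx$-adapted because $\mathbf{X}^*$ is and $f^*$ is continuous, and $\Ex[\int_0^T|\theta^*_t|^2dt]\le T\sup_{\theta\in\Theta}|\theta|^2<\infty$ by compactness of $\Theta$; hence $\theta^*\in\mathbb{U}^{\Px,\Fx}$.

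\emph{Verification inequality.} Next, for an arbitrary $\theta\in\mathbb{U}^{\Px,\Fx}$ with state process $\mathbf{X}^\theta$ from \eqref{eq:finite-x}, I would apply It\^o's formula to $s\mapsto V(s,\mathbf{X}^\theta_s;\mathbf{Y})$ on $[0,T-\epsilon]$ (legitimate since $V(\cdot,\cdot;\mathbf{Y})\in C^{1,2}([0,T)\times\R^N)$), and use the HJB equation \eqref{eq:HJB} together with the pointwise inequality $\mathcal{H}(s,\mathbf{x},\nabla_{\mathbf{x}}V,\nabla_{\mathbf{x}}^2V;\mathbf{y})\le\mathbf{b}(\mathbf{x},\theta_s)^\top\nabla_{\mathbf{x}}V+\frac{1}{2}\mathrm{tr}(\Sigma\Sigma^\top\nabla_{\mathbf{x}}^2V)+R_N(\mathbf{x},\mathbf{y};\theta_s)$ to obtain
\[
dV(s,\mathbf{X}^\theta_s;\mathbf{Y})\ge-R_N(\mathbf{X}^\theta_s,\mathbf{Y};\theta_s)\,ds+\big(\nabla_{\mathbf{x}}V(s,\mathbf{X}^\theta_s;\mathbf{Y})\big)^\top\Sigma\,d\mathbf{W}^0_s.
\]
Localizing with $\tau_n:=\inf\{s\ge0:|\mathbf{X}^\theta_s|\ge n\}\wedge(T-\epsilon)$, integrating and taking expectations removes the martingale part, giving $\Ex[V(0,\mathbf{X}_0;\mathbf{Y})]\le\Ex[V(\tau_n,\mathbf{X}^\theta_{\tau_n};\mathbf{Y})]+\Ex[\int_0^{\tau_n}R_N(\mathbf{X}^\theta_s,\mathbf{Y};\theta_s)\,ds]$. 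I would then let $n\to\infty$ and $\epsilon\to0$, using the quadratic growth of $V$ (Lemma~\ref{lem:wellpo-HJB}) together with the uniform moment bound $\Ex[\sup_t|\mathbf{X}^\theta_t|^{2+\varrho}]\le D_1$ (Lemma~\ref{lem:estimates}) for uniform integrability of $V(\tau_n,\mathbf{X}^\theta_{\tau_n};\mathbf{Y})$, monotone convergence for the nonnegative running cost, and continuity of $V$ up to $T$ with $V(T,\cdot;\mathbf{Y})=L_N(\cdot,\mathbf{Y})$, to conclude $\Ex[V(0,\mathbf{X}_0;\mathbf{Y})]\le J_N(\theta)$.

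\emph{Optimality and the main obstacle.} Finally, repeating the computation with $\theta=\theta^*$: by the very definition \eqref{eq:optimalMar} of $f^*$ as the $\arg\min$ in the Hamiltonian, each of the above inequalities becomes an equality $\Px(d\omega)\otimes ds$-a.e., so the same limiting procedure yields $\Ex[V(0,\mathbf{X}_0;\mathbf{Y})]=J_N(\theta^*)$, and hence $J_N(\theta^*)\le J_N(\theta)$ for every admissible $\theta$. The delicate point I anticipate is precisely the limiting argument in the localization and terminal-time passage, i.e. showing $\Ex[V(\tau_n,\mathbf{X}^\theta_{\tau_n};\mathbf{Y})]\to\Ex[L_N(\mathbf{X}^\theta_T,\mathbf{Y})]$ and that the stochastic integral is a genuine martingale rather than merely a local one; this is where the a priori estimates of Lemma~\ref{lem:estimates} and the growth bound of Lemma~\ref{lem:wellpo-HJB} are essential (if the quadratic growth of $V$ alone is not enough, an interior gradient estimate for the uniformly parabolic HJB equation, which is non-degenerate since $\Sigma\Sigma^\top=\sigma_0^2\mathbf{1}\mathbf{1}^\top+\mathrm{diag}(\sigma_1^2,\ldots,\sigma_N^2)$ is positive definite, gives linear growth of $\nabla_{\mathbf{x}}V$ and closes the argument). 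Everything else is routine given the two lemmas.
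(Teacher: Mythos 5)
Your proposal is correct and follows essentially the same verification argument as the paper: Itô's formula applied to $V$ along the state process, the HJB inequality/equality from \eqref{eq:HJB} and \eqref{eq:optimalMar}, a localization making the stochastic integral a true martingale, and then DCT/MCT passages justified by the quadratic growth of $V$ (Lemma~\ref{lem:wellpo-HJB}) and the moment bounds of Lemma~\ref{lem:estimates}, with existence for \eqref{eq:Op-SDE} from local Lipschitz continuity plus linear growth of the projected feedback drift. The only cosmetic difference is your choice of localizing stopping times (exit times from balls and stopping before $T$, versus the paper's truncation of $\int_0^t|\nabla_{\mathbf{x}}V^\top\Sigma|^2ds$), and your fallback gradient-estimate remark is not needed since either localization already makes the stochastic integral a martingale.
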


Lemma \ref{lem:ex-op-strict} establishes the existence of the optimal control of \eqref{eq:control-problem} for each $N\geq 1$ via the classical solution of the parameterized HJB equation \eqref{eq:HJB}. Our next goals are to study $(i)$ the convergence of optimal controls as $N\to\infty$; $(ii)$ the limit is the minimizer of mean field control problem. A natural observation is to apply \eqref{eq:optimalMar} to establish thus limit as $N\to\infty$. However, we note that, the feedback control function heavily depends on the dimension $N$ coming from $\mathbf{x},\mathbf{y}$ and $V$. Therefore, it is usually a challenging task to build the rigorous connection between the problem with $N$ banks and the mean field control problem by proving the strong convergence of optimal control processes from the system with $N$ banks to the mean field counterpart. As a detour, instead of studying the control problem \eqref{eq:control-problem} directly, we focus on an equivalent weak formulation by looking for the optimal solution in an appropriate space that attains the same value function. Some notable advantages of the weak formulation reside in that: $(i)$ the convenient compactness property and weak convergence arguments that can significantly simplify the proof of the existence of an optimal solution; $(ii)$ the appropriate theoretical setting to align with the Gamma-convergence arguments (see, e.g. \cite{Braides14}), in which we are allowed to rigorously show the convergence of the optimizers under the Wasserstein metric as $N$ tends to infinity, and the limit point corresponds to an optimal solution in the model with infinitely many banks.

\subsection{Optimal control under weak formulation}

We start from the canonical space representation. Let  $\Xi:=\R^{2}$ and $\mathcal{C}_T:=C([0,T];\R)$. The canonical space is defined by an infinite product space that
\begin{align}\label{eq:OmegaNFN}
\Omega_\infty:= \Xi^{\mathbb{N}}\times\mathcal{C}_T\times \mathcal{C}_T^{\mathbb{N}}\times\mathcal{L}_T^2,\quad \F_\infty:=\mathcal{B}(\Omega_\infty),
\end{align}
where $\mathcal{L}_T^2:=L^2([0,T];\R)$, and $\mathcal{B}(\Omega_\infty)$ is the Borel sigma-algebra generated by open sets in $\Omega_\infty$. The product space $\Omega_{\infty}$ is endowed with the metric that
\begin{align}\label{eq:dinfty}
d((\gamma,\varsigma,w,\kappa),(\hat{\gamma},\hat{\varsigma},\hat{w},\hat{\kappa}))
:=d_{1}(\gamma,\hat{\gamma})+d_2(\varsigma,\hat{\varsigma})+d_3(w,\hat{w})+d_4(\kappa,\hat{\kappa}),
\end{align}
for any $(\gamma,w,\varsigma,\rho)$, $(\hat{\gamma},\hat{w},\hat{\varsigma},\hat{\rho})\in \Omega_{\infty}$. Here, the metrics $d_i$, $i=1,\ldots, 4$, are defined respectively by
\begin{align}\label{eq:metricd1234}
d_1(\gamma,\hat{\gamma})&:=\sum_{i=1}^{\infty}2^{-i}\frac{|\gamma_i-\hat{\gamma}_i|}{1+|\gamma_i-\hat{\gamma}_i|},\quad \gamma=(\gamma_i)_{i=1}^{\infty},\ \hat{\gamma}=(\hat{\gamma}_i)_{i=1}^{\infty}\in \Xi^{\mathbb{N}};\nonumber\\
d_2(\varsigma,\hat{\varsigma})&:=\|\varsigma-\hat{\varsigma}\|_T=\sup_{t\in[0,T]}\left|\varsigma_t-\hat{\varsigma}_t\right|,\quad \varsigma,\hat{\varsigma}\in\mathcal{C}_T;\\
d_3(w,\hat{w})&:=\sum_{i=1}^{\infty}2^{-i}\frac{\|w_i-\hat{w}_i\|_{T}}{1+\|w_i-\hat{w}_i\|_{T}},\quad w=(w_i)_{i=1}^{\infty},\ \hat{w}=(\hat{w}_i)_{i=1}^{\infty}\in \mathcal{C}_T^{\N};\nonumber\\
d_4(\kappa,\hat{\kappa})&:=\|\kappa-\hat{\kappa}\|_{\mathcal{L}_T^2}=\left(\int_0^T\left|\kappa_t-\hat{\kappa}_t\right|^2dt\right)^{\frac{1}{2}},\quad \kappa,\hat{\kappa}\in\mathcal{L}^2_T.\nonumber
\end{align}
We also define ${\bm\zeta}:=(\zeta^1,\zeta^2,\ldots)^\top$ with $\zeta^i:=(X_0^i,Y^i)\in\Xi$ and ${\bf W}:=(W^1,W^2,\ldots)^\top$. As a result, for any $\theta\in\mathbb{U}^{\Px,\Fx}$,  $({\bm\zeta},(W^0,{\bf W}),\theta)$ is an $\Omega_{\infty}$-valued random variable under $(\Omega,\F,\Px)$. Moreover, we use the notation $\widehat{\cal X}:=(\widehat{\bm\zeta},(\widehat{W}^0,\widehat{\bf W}),\widehat{\theta})$ as the coordinate process on $\Omega_{\infty}$, i.e., $\widehat{\cal X}(\omega)=\omega$ for all $\omega\in\Omega_{\infty}$. Let $\widehat{\Fx}=(\F^{\widehat{\cal X}}_t)_{t\in[0,T]}$ be the natural filtration generated by the coordinate process $\widehat{\cal X}$.

Given the coordinate process $\widehat{\cal X}=(\widehat{\bm\zeta},(\widehat{W}^0,\widehat{\bf W}),\widehat{\theta})$, we are ready to give the definition of admissible controls in our framework.
\begin{definition}\label{def:relax-sol}
For $\nu\in{\cal P}(\Xi^{\mathbb{N}})$ (i.e., the space of probability measures on $\Xi^{\mathbb{N}}$), we call $\mathcal{Q}(\nu)$ the set of admissible controls as a set of probability measures $Q$ on $(\Omega_{\infty},\F_{\infty})$ satisfying
\begin{description}
\item[(i)] $Q\circ\widehat{\bm\zeta}^{-1}=\nu$;
\item[(ii)] $(\widehat{W}^0,\widehat{\mathbf{W}})$ is a sequence of independent Wiener processes on $(\Omega_{\infty},\widehat{\F}_{\infty},\widehat{\Fx},Q)$;
\item[(iii)] $\widehat{\theta}\in\mathbb{U}^{Q,\widehat{\Fx}}$.
\end{description}
\end{definition}
In view of Definition~\ref{def:relax-sol}, we have that for any $Q\in{\cal Q}(\nu)$, there exists a coordinate process $(\widehat{\bm\zeta},(\widehat{W}^0,\widehat{\bf W}),\widehat{\theta})$ under $(\Omega_{\infty},\widehat{\F}_{\infty},\widehat{\Fx},Q)$ satisfying \textbf{(i)}-\textbf{(iii)} in Definition~\ref{def:relax-sol}. For $\nu\in{\cal P}(\Xi^{\mathbb{N}})$, the targeted {\it weak control problem} associated to the strong control problem \eqref{eq:control-problem} is formulated as:
\begin{align}\label{eq:objectfuncRex}
V_N^R(\nu)=\inf_{Q\in{\cal Q}(\nu)}J_N^R(Q):=\inf_{Q\in{\cal Q}(\nu)}\Ex^Q\left[L_N(\widehat{\mathbf{X}}^{\theta, N}_T,\widehat{\mathbf{Y}}^N)+\int_0^T R_N(\widehat{\mathbf{X}}^{\theta, N}_t,\widehat{\mathbf{Y}}^N;\widehat{\theta}_t)dt\right],
\end{align}
where $\Ex^Q$ is the expectation operator under $Q\in{\cal Q}(\nu)$. For $i\geq1$, the state process $\widehat{X}^{\theta,i}$ follows the SDE \eqref{eq:systemmodel} driven by the coordinate process $(\widehat{\bm\zeta},(\widehat{W}^0,\widehat{\mathbf{W}}),\widehat{\theta})$ with the initial date $\widehat{\zeta}^i=(\widehat{X}_0^i,\widehat{Y}^i)\in\Xi$.  Note that the weak control strategy actually corresponds to the mixed (or randomized) strategy in the context of Markov decision process (see, e.g. \cite{Bertsekas05} and \cite{HernandezLermaLasserre96}).

We now present the next important result on the existence of the optimal weak control and the equivalence between two problem formulations.

\begin{proposition}\label{prop:ex-op-relaxed}
Let assumptions {\Asa} and {\Ath} hold. For any $\nu\in{\cal P}(\Xi^{\N})$ with $\sup_{i\in\N}\int_{\Xi^{\N}}(|x_i|^{2+\varrho}+|y_i|^2)\nu(d({\bf x},{\bf y}))<\infty$, there exists $\widehat{Q}^*\in{\cal Q}(\nu)$ attaining the value function that
\begin{align}\label{eq:equione00}
V_N^R(\nu)=\inf_{Q\in{\cal Q}(\nu)}J_N^R(Q)=J_N^R(\widehat{Q}^*).
\end{align}
Moreover, the value function in the strong control problem \eqref{eq:control-problem} coincides with the value function in the weak control problem \eqref{eq:objectfuncRex} that
\begin{align}\label{equitwo}
\inf_{Q\in{\cal Q}(\nu)}J_N^R(Q)=\Ex[V(0,\mathbf{X}_0;\mathbf{Y})]=\inf_{\theta\in\mathbb{U}^{\Px,\Fx}}J_N(\theta).
\end{align}
\end{proposition}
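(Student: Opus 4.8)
The plan is to identify $V_N^R(\nu)$ with the value $\Ex[V(0,\mathbf X_0;\mathbf Y)]$ of the classical solution $V$ of the parameterized HJB equation \eqref{eq:HJB} by a two-sided estimate, and to exhibit an explicit optimal $\widehat Q^{*}$ by transporting the strong optimal feedback control of Lemma~\ref{lem:ex-op-strict} onto the canonical space $\Omega_{\infty}$. Throughout, let $(\mathbf X_0,\mathbf Y)=((X_0^1,\dots,X_0^N),(Y^1,\dots,Y^N))$ have joint law equal to the first $N$ coordinates of $\nu$. The moment assumption on $\nu$ together with the quadratic growth of $V$ from Lemma~\ref{lem:wellpo-HJB} makes $\Ex[V(0,\mathbf X_0;\mathbf Y)]$ finite, and a Gronwall argument in the spirit of Lemma~\ref{lem:estimates} shows $J_N^R(Q)<\infty$ for every $Q\in\mathcal Q(\nu)$.

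\emph{Lower bound.} Fix an arbitrary $Q\in\mathcal Q(\nu)$. Under $Q$, conditions (ii)--(iii) of Definition~\ref{def:relax-sol} say that $(\widehat W^0,\widehat{\mathbf W})$ is a sequence of independent $\widehat{\Fx}$-Wiener processes and $\widehat\theta\in\mathbb{U}^{Q,\widehat{\Fx}}$, so the linear SDE \eqref{eq:systemmodel} (equivalently \eqref{eq:finite-x}) driven by these data with initial value $\widehat{\mathbf X}_0$ has a unique strong solution $\widehat{\mathbf X}^{\theta,N}$ obeying the moment bounds of Lemma~\ref{lem:estimates}. Applying Itô's formula to $t\mapsto V(t,\widehat{\mathbf X}^{\theta,N}_t;\widehat{\mathbf Y}^N)$ on $[0,T-\varepsilon]$, using the $C^{1,2}$-regularity from Lemma~\ref{lem:wellpo-HJB}, letting $\varepsilon\downarrow0$ by continuity, taking $\Ex^Q$ and discarding the (true) martingale term yields
\begin{align*}
\Ex^Q\!\big[L_N(\widehat{\mathbf X}^{\theta,N}_T,\widehat{\mathbf Y}^N)\big]
&=\Ex^Q\!\big[V(0,\widehat{\mathbf X}_0;\widehat{\mathbf Y}^N)\big]\\
&\quad+\Ex^Q\!\int_0^T\!\Big(\tfrac{\partial V}{\partial t}+\mathbf b(\widehat{\mathbf X}^{\theta,N}_t,\widehat\theta_t)^{\top}\nabla_{\mathbf x}V+\tfrac12\mathrm{tr}(\Sigma\Sigma^{\top}\nabla_{\mathbf x}^2V)\Big)dt .
\end{align*}
By the definition of the Hamiltonian $\mathcal H$, the integrand is bounded below by $\partial_tV+\mathcal H(t,\widehat{\mathbf X}^{\theta,N}_t,\nabla_{\mathbf x}V,\nabla_{\mathbf x}^2V;\widehat{\mathbf Y}^N)-R_N(\widehat{\mathbf X}^{\theta,N}_t,\widehat{\mathbf Y}^N;\widehat\theta_t)$, which equals $-R_N(\widehat{\mathbf X}^{\theta,N}_t,\widehat{\mathbf Y}^N;\widehat\theta_t)$ by \eqref{eq:HJB}. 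Hence $J_N^R(Q)\ge\Ex^Q[V(0,\widehat{\mathbf X}_0;\widehat{\mathbf Y}^N)]=\Ex[V(0,\mathbf X_0;\mathbf Y)]$, the last equality because $Q\circ\widehat{\bm\zeta}^{-1}=\nu$; taking the infimum over $Q$ gives $V_N^R(\nu)\ge\Ex[V(0,\mathbf X_0;\mathbf Y)]$.

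\emph{Upper bound and existence.} Take $\theta^{*}_t=f^{*}(t,\mathbf X^{*}_t;\mathbf Y)$ and $\mathbf X^{*}$ from Lemma~\ref{lem:ex-op-strict} on $(\Omega,\F,\Px)$ and set $\widehat Q^{*}:=\Px\circ(\bm\zeta,(W^0,\mathbf W),\theta^{*})^{-1}$. Checking $\widehat Q^{*}\in\mathcal Q(\nu)$: (i) is immediate; (iii) holds since under $\widehat Q^{*}$ the coordinate $\widehat\theta$ has the law of $\theta^{*}\in\mathbb{U}^{\Px,\Fx}$, which is $\Theta$-valued, square integrable and adapted to $\widehat{\Fx}$ by construction; (ii) holds because $(W^0,\mathbf W)$ are independent $\Fx$-Wiener processes and therefore remain so for the smaller filtration generated by $(\bm\zeta,(W^0,\mathbf W),\theta^{*})$, to which they are adapted, a property that depends only on the joint law and so transports to $\widehat Q^{*}$. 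Since $\mathbf X^{*}$ is the image of $(\mathbf X_0,W^0,\mathbf W,\theta^{*})$ under the measurable strong-solution map of the linear SDE \eqref{eq:finite-x}, the state process associated with $\widehat Q^{*}$ has the law of $\mathbf X^{*}$, so $J_N^R(\widehat Q^{*})=J_N(\theta^{*})=\Ex[V(0,\mathbf X_0;\mathbf Y)]$ by Lemma~\ref{lem:ex-op-strict}. Combined with the lower bound this gives $V_N^R(\nu)=J_N^R(\widehat Q^{*})=\Ex[V(0,\mathbf X_0;\mathbf Y)]$, i.e.\ \eqref{eq:equione00}; and since $\inf_{\theta}J_N(\theta)=\Ex[V(0,\mathbf X_0;\mathbf Y)]$ by the same lemma, \eqref{equitwo} follows.

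\emph{Main obstacle.} The only genuinely delicate point lies in the lower bound: showing that the stochastic integral $\int_0^{\cdot}\nabla_{\mathbf x}V(t,\widehat{\mathbf X}^{\theta,N}_t;\widehat{\mathbf Y}^N)^{\top}\Sigma\,d\widehat{\mathbf W}^0_t$ is a true, not merely local, martingale uniformly over $Q\in\mathcal Q(\nu)$. This rests on the linear growth of $\nabla_{\mathbf x}V$ (from the quadratic growth in Lemma~\ref{lem:wellpo-HJB} plus interior gradient estimates for \eqref{eq:HJB}) together with the control-uniform moment bounds of Lemma~\ref{lem:estimates}, after which a standard localization and uniform-integrability argument and the continuity of $V$ up to $t=T$ let the terminal term through. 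Verifying condition (ii) for $\widehat Q^{*}$ is routine but has to be stated explicitly, since the weak formulation is sensitive to the filtration. I note that the existence part also admits an HJB-free proof: the subset of $\mathcal Q(\nu)$ with bounded control cost is relatively compact (the initial law and the Wiener law are fixed and $\Theta$ is compact by \Ath) and $Q\mapsto J_N^R(Q)$ is lower semicontinuous, so a minimizer exists by the direct method or, as in the mean field step of the paper, via Ekeland's variational principle.
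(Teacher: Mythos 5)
Your proposal is correct and follows essentially the same route as the paper's proof: transport the strong optimal feedback control $\theta^*$ of Lemma~\ref{lem:ex-op-strict} onto the canonical space to define $\widehat Q^*=\Px\circ(\bm\zeta,(W^0,\mathbf W),\theta^*)^{-1}$, verify conditions (i)--(iii) of Definition~\ref{def:relax-sol}, and obtain the lower bound $J_N^R(Q)\ge\Ex[V(0,\mathbf X_0;\mathbf Y)]$ for every $Q\in\mathcal Q(\nu)$ by the same It\^o/HJB verification argument used to prove \eqref{eq:V-leq}. The only cosmetic difference is that you control the stochastic integral via gradient growth and uniform integrability, whereas the paper localizes with the stopping times $\tau_m$ and applies dominated convergence; both are standard and equivalent here.
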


\begin{proof}

Recall that ${\bm\zeta}=(\zeta^{i})_{i=1}^{\infty}$ is a $\Xi^{\N}$-valued r.v. with the law $\Px\circ{\bm\zeta}^{-1}=\nu$, and $(W^0,{\bf W})$ is a sequence of independent Brownian motions under the original probability space $(\Omega,\F,\Px)$.
We then define $\widehat{Q}^*:=\Px\circ({\bm\zeta},(W^0,{\bf W}),\theta^*)^{-1}$ and check that $\widehat{Q}^*\in{\cal Q}(\nu)$. To this end, let $\widehat{\cal X}^*:=(\widehat{\bm\zeta}^*,(\widehat{W}^{*,0},{\bf \widehat{W}}^*),\widehat{\theta}^*)$ with $\widehat{\bm\zeta}^*=(\widehat{\zeta}^{*,1},\widehat{\zeta}^{*,2},\ldots)^\top$ and $\widehat{\zeta}^{*,i}:=(\widehat{X}_0^{*,i},\widehat{Y}^{*,i})$ be the coordinate process on $\Omega_{\infty}$, i.e., $\widehat{\cal X}^*(\omega)=\omega$ for all $\omega\in\Omega_{\infty}$. Moreover, let $\widehat{\Fx}^*:=(\F^{\widehat{\cal X}^*}_t)_{t\in[0,T]}$ be the natural filtration generated by this coordinate process. From the fact $\theta^*\in\mathbb{U}^{\Px,\Fx}$,  it is easy to see that
\begin{description}
  \item[(i)] $\widehat{Q}^*\circ(\widehat{{\bm\zeta}}^*)^{-1}=\Px\circ({\bm\zeta})^{-1}=\nu$.
  \item[(ii)] $(\widehat{W}^{*,0},\widehat{\bf W}^*)$ are independent Brownian motions on $(\Omega_{\infty},\widehat{\F}_{\infty}^*,\widehat{\Fx}^*,\widehat{Q}^*)$.
  \item[(iii)] $\widehat{\theta}^*\in\mathbb{U}^{\widehat{Q}^*,\widehat{\Fx}}$.
\end{description}
Furthermore, let $\widehat{\mathbf{X}}^*$ be the solution to the SDE \eqref{eq:finite-x} under $(\Omega_{\infty},\widehat{\F}_{\infty},\widehat{\Fx}^*,\widehat{Q}^*)$ w.r.t. the coordinate process $(\widehat{\bm\zeta}^*,(\widehat{W}^{*,0},{\bf \widehat{W}}^*),\widehat{\theta}^*)$. By the same arguments above, we can show that
\[\Ex^{\widehat{Q}^*}[V(T,\widehat{\mathbf{X}}^*_T;\widehat{\mathbf{Y}}^{*})]=\Ex^{\widehat{Q}^*}[V(0,\widehat{\mathbf{X}}_0^*;\widehat{\mathbf{Y}}^{*})] -\Ex^{\widehat{Q}^*}\left[\int_0^TR_N(\widehat{\mathbf{X}}^*_t,\widehat{\mathbf{Y}}^*;\widehat{\theta}_t^*)dt\right].\]
By the definition of ${\cal Q}(\nu)$, we get that $\Px\circ(\mathbf{X}_0,\mathbf{Y})^{-1}= \widehat{Q}^*\circ(\widehat{\mathbf{X}}_0^*,\widehat{\mathbf{Y}}^{*})^{-1}$, thus
\[\Ex[V(0,\mathbf{X}_0;\mathbf{Y})]=\Ex^{\widehat{Q}^*}[V(0,\widehat{\mathbf{X}}_0^*;\widehat{\mathbf{Y}}^{*})]=J_N^R(\widehat{Q}^*).\]
Similarly, for any $Q\in{\cal Q}(\nu)$, let $\widehat{\mathbf{X}}^{\theta}$ be the process satisfying the SDE \eqref{eq:finite-x} under $(\Omega_{\infty},\widehat{\F}_{\infty},\widehat{\Fx},Q)$ with respect to the coordinate process $\widehat{\cal X}=(\widehat{\bm\zeta},(\widehat{W}^0,{\bf \widehat{W}}),\widehat{\theta})$, where $\widehat{\Fx}:=(\F^{\widehat{\cal X}}_t)_{t\in[0,T]}$. It can be deduced from \eqref{eq:HJB} and the definition of ${\cal Q}(\nu)$ that
\begin{align*}
J_N^R(\widehat{Q}^*)=\Ex[V(0,\mathbf{X}_0;\mathbf{Y})]=\Ex^Q[V(0,\widehat{\mathbf{X}}_0;\widehat{\mathbf{Y}})]\leq J_N^R(Q),
\end{align*}
where the above inequality $\Ex^Q[V(0,\widehat{\mathbf{X}}_0;\widehat{\mathbf{Y}})]\leq J_N^R(Q)$ can be derived by the similar argument used in the proof of \eqref{eq:V-leq}.
Thus, we have the desired equivalence that
\[J_N^R(\widehat{Q}^*)=V_N^R(\nu)=\inf_{Q\in{\cal Q}(\nu)}J_N^R(Q)=\Ex[V(0,\mathbf{X}_0;\mathbf{Y})]=\inf_{\theta\in\mathbb{U}^{\Px,\Fx}}J_N(\theta),\]
which completes the proof.
\end{proof}

\section{Weak Control Problem in the Mean Field Model}\label{sec:lim-ch}
This section establishes, respectively, the convergence results of the empirical processes and the cost functionals under weak formulation from the system with finite banks to the mean field model.
\subsection{Convergence of the empirical processes}

To characterize the limiting behavior of the objective functional $J_N^R$ (see \eqref{eq:objectfuncRex}) in the mean field sense, we first show the convergence of the sequence of empirical processes. Let $Q_N\in\mathcal{Q}(\nu)$ and $(\widehat{\bm\zeta}^N,(\widehat{W}^{N,0},\widehat{\mathbf{W}}^N),\widehat{\theta}^N)$ be the corresponding coordinate process to $Q_N$ as in Definition \ref{def:relax-sol}. Also, we consider $\widehat{\mathbf{X}}^N:=(\widehat{X}^{N,1}_t,\ldots,\widehat{X}^{N,N}_t)^\top_{t\in[0,T]}$ that solves the following system:
\begin{equation}
  \left\{\begin{aligned}
    d\widehat{X}^{N,i}_t &= \frac{a_i}{N}\sum_{j=1}^N(\widehat{X}^{N,j}_t-\widehat{X}^{N,i}_t)dt + u_i\widehat{\theta}_t^Ndt + \sigma_id\widehat{W}^{N,i}_t+\sigma_0d\widehat{W}^{N,0}_t,\\
    \widehat{X}^{N,i}_0&=\widehat{X}^i_0,\qquad i=1,\ldots,N.
  \end{aligned}\right.
\end{equation}
Here, for ease of notation, we omit the dependence on $\widehat{\theta}^N$ in $\widehat{\mathbf{X}}^{N}$. For a metric space $(\mathcal{X},d)$ and $p\geq0$, let $\mathcal{P}_p(\mathcal{X})$ be the set of probability measures with finite moments of order $p>0$, i.e., $\mathcal{P}_p(\mathcal{X}):=\left\{\mu\in\mathcal{P}(\mathcal{X}):~ \int_{\mathcal{X}}d^p(x,x_0)\mu(dx)<\infty\right\}$, for some $x_0\in\mathcal{X}$.

Correspondingly, we denote by ${\cal W}_{{\cal X},p}(\cdot,\cdot)$ the $p$-th order Wasserstein metric on ${\cal X}$. Let $E:=\cO\times\R\times\R$, and define the $\mathcal{P}_2(E)$-valued process by
\begin{equation}\label{eq:empirical-finite}
\mu^N_t:=\frac{1}{N}\sum_{i=1}^N\delta_{(\xi^i,\widehat{Y}^{N,i},\widehat{X}^{N,i}_t)},\quad t\in[0,T].
\end{equation}
We shall see that, for $N\geq1$, $\mu^N=(\mu^N_t)_{t\in[0,T]}$ is a sequence in $S:=C([0,T];\mathcal{P}_2(E))$, which is the space of continuous $\mathcal{P}_2(E)$-valued functions on $[0,T]$. We equip this space with the uniform metric $d_{S}$ defined by
\begin{equation}\label{eq:metric-S}
  d_S(\rho,\hat{\rho}):=\sup_{t\in[0,T]}\mathcal{W}_{E,2}(\rho_t,\hat{\rho}_t),\quad \rho,\hat{\rho}\in S.
\end{equation}
We also consider the joint distribution that
\begin{align}\label{eq:bbQN}
  \Qx^N:=Q_N\circ(\mu_0^N,\widehat{\theta}^N,\mu^N)^{-1}.
\end{align}
The first result of this section is to characterise the limit of $\Qx^N$ under assumptions {\Asa} and {\Ath}, which is related to the solution of a stochastic FPK equation. To begin with, for any $\phi$ in $C_b^2(\R)$, we introduce the infinitesimal generator that
\begin{align}\label{eq:generator}
  \mathcal{A}^{m,\theta}\phi(x):=g_{a,u}^{m,\theta}(x)\phi'(x)+\frac{\sigma^2+\sigma_0^2}{2}\phi''(x),\quad x\in\R,~ m\in\mathcal{P}_2(E),~\theta\in\Theta,
\end{align}
where $g_{a,u}^{m,\theta}(x):= a\left(\int_{E} zm(d\xi,dy,dz)-x\right)+u\theta$.

Moreover, let $\hat{S}:=\mathcal{P}_2(E)\times\mathcal{L}_T^2\times S$. We endow the space $\hat{S}$ with the following metric that
\begin{align}\label{eq:metric-hatS}
d_{\hat{S}}((\nu,\theta,\rho),(\hat{\nu},\hat{\theta},\hat{\rho})):={\cal W}_{E,2}(\nu,\hat{\nu})+\|\theta-\hat{\theta}\|_{{\cal L}_T^2}+d_S(\rho,\hat{\rho}),
\end{align}
for $(\nu,\theta,\rho)$ and $(\hat{\nu},\hat{\theta},\hat{\rho})\in\hat{S}$. We then have the next result.
\begin{theorem}\label{thm:convergence-empirical}
Let assumptions {\Asa} and {\Ath} hold. We assume in addition that 
\begin{equation}\label{eq:convercondThm4-1}
Q_N\circ(\mu_0^N,\widehat{\theta}^N)^{-1}\Rightarrow\nu_0,
\end{equation}
as $N\to\infty$, for some $\nu_0\in\mathcal{P}(\mathcal{P}_2(E)\times\mathcal{L}^2_T)$. Then $\{\Qx^N\}_{N\geq 1}$ is relatively compact in $\mathcal{P}_2(\hat{S})$. Furthermore, if the law of an $\hat{S}$-valued r.v. $(\tilde{\mu}_0,\tilde{\theta},\tilde{\mu})$ defined on some probability space $(\widetilde{\Omega},\widetilde{\mathcal{F}},\widetilde{\Px})$ is the limit of a convergent subsequence of $\{\Qx^N\}_{N\geq 1}$, then $\tilde{\mu}=(\tilde{\mu}_t)_{t\in[0,T]}$ satisfies the stochastic FPK equation that
\begin{equation}\label{eq:SFPK}
\langle \tilde{\mu}_t,\phi \rangle=\langle \tilde{\mu}_0,\phi \rangle+\int_0^t \langle \tilde{\mu}_s,\mathcal{A}^{\tilde{\mu}_s,\tilde{\theta}_s}\phi \rangle ds+\sigma_0\int_0^t \langle \tilde{\mu}_s,\phi' \rangle d\widetilde{W}^0_s,\quad \forall~\phi\in C_b^2(\R),
\end{equation}
where $\widetilde{W}^0=(\widetilde{W}^0_t)_{t\in[0,T]}$ is a standard Brownian motion under $(\widetilde{\Omega},\widetilde{\mathcal{F}},\widetilde{\Px})$ and
\begin{align*}
        \langle \tilde{\mu}_t,\mathcal{A}^{\tilde{\mu}_t,\tilde{\theta}_t}\phi \rangle=\int_E \left[g_{a,u}^{\tilde{\mu}_t,\theta}(x)\phi'(x)+\frac{\sigma^2+\sigma_0^2}{2}\phi''(x)\right]\tilde{\mu}_t(d(a,u,\sigma),dy,dx).
      \end{align*}
\end{theorem}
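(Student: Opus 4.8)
The plan is to establish the two assertions separately: (i) relative compactness of $\{\Qx^N\}_{N\ge1}$ in $\mathcal{P}_2(\hat S)$, via moment and modulus-of-continuity estimates, and (ii) identification of every subsequential limit of the $\mu$-marginal as a weak solution of the stochastic FPK equation \eqref{eq:SFPK}, via Itô's formula plus a martingale-problem/Skorokhod-representation argument.

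\textbf{Relative compactness.} First I would record the two uniform bounds that drive everything, both consequences of Lemma~\ref{lem:estimates}: $\sup_{N}\Ex^{Q_N}\big[\sup_{t\in[0,T]}\int_E|z|^{2+\varrho}\mu^N_t(d\xi,dy,dz)\big]\le D_1$, and, using the synchronous coupling $\mathcal{W}_{E,2}^2(\mu^N_t,\mu^N_s)\le\frac1N\sum_{i=1}^N|\widehat X^{N,i}_t-\widehat X^{N,i}_s|^2$, that $\lim_{\delta\to0}\sup_N\Ex^{Q_N}\big[\sup_{|t-s|\le\delta}\mathcal{W}_{E,2}^2(\mu^N_t,\mu^N_s)\big]=0$. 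The first bound gives, for each fixed $t$, tightness of $\{\mu^N_t\}$ in $\mathcal{P}_2(E)$ (boundedness of $\xi^i$, tightness of $\{\widehat Y^{N,i}\}$ inherited from $\nu$, and uniform integrability of the $z$-marginal from the $(2+\varrho)$-moment); the second gives time-equicontinuity. An Arzelà--Ascoli/Aldous criterion for $C([0,T];\mathcal{P}_2(E))$ then yields tightness of $\{\mu^N\}$ in $S$, and combining with the hypothesis \eqref{eq:convercondThm4-1} and compactness of $\Theta$ gives tightness of $\{\Qx^N\}$ in $\mathcal{P}(\hat S)$. Finally I would upgrade this to relative compactness in $\mathcal{P}_2(\hat S)$ by verifying uniform integrability of $d_{\hat S}(\,\cdot\,,\text{fixed point})^2$, which again reduces to the uniform $(2+\varrho)$-moment bound on the states, the $\mathcal{L}^2_T$-bound on $\widehat\theta^N$ coming from compactness of $\Theta$, and \eqref{eq:convercondThm4-1}.

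\textbf{Characterization of the limit.} Fix $\phi\in C_b^2(\R)$. Applying Itô's formula to $\phi(\widehat X^{N,i}_t)$, averaging over $i=1,\dots,N$, and using $\frac1N\sum_j\widehat X^{N,j}_t=\int_E z\,\mu^N_t(d\xi,dy,dz)$, one gets
\[
\langle\mu^N_t,\phi\rangle=\langle\mu^N_0,\phi\rangle+\int_0^t\langle\mu^N_s,\mathcal{A}^{\mu^N_s,\widehat\theta^N_s}\phi\rangle\,ds+\sigma_0\int_0^t\langle\mu^N_s,\phi'\rangle\,d\widehat W^{N,0}_s+M^{N}_t(\phi),
\]
with $M^N_t(\phi):=\frac1N\sum_{i=1}^N\sigma_i\int_0^t\phi'(\widehat X^{N,i}_s)\,d\widehat W^{N,i}_s$, and Doob's inequality together with boundedness of $\sigma_i$ and $\phi'$ gives $\Ex^{Q_N}[\sup_t|M^N_t(\phi)|^2]\le C(\phi)/N\to0$. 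Hence $G^N_t(\phi):=\langle\mu^N_t,\phi\rangle-\langle\mu^N_0,\phi\rangle-\int_0^t\langle\mu^N_s,\mathcal{A}^{\mu^N_s,\widehat\theta^N_s}\phi\rangle\,ds$ is a $Q_N$-martingale with quadratic variation $\sigma_0^2\int_0^t\langle\mu^N_s,\phi'\rangle^2\,ds$ up to a term vanishing in $L^1$. Along a convergent subsequence I would invoke the Skorokhod representation theorem to realize $(\mu_0^N,\widehat\theta^N,\mu^N)\to(\tilde\mu_0,\tilde\theta,\tilde\mu)$ almost surely on a common space $(\widetilde\Omega,\widetilde{\mathcal F},\widetilde\Px)$, in $\mathcal{W}_{E,2}$, in $\|\cdot\|_{\mathcal{L}^2_T}$ and in $d_S$ respectively. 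Passing to the limit termwise and using uniform $L^2$-bounds on $G^N(\phi)$ (BDG inequality plus the moment estimates) for uniform integrability, the limit $\tilde G_t(\phi):=\langle\tilde\mu_t,\phi\rangle-\langle\tilde\mu_0,\phi\rangle-\int_0^t\langle\tilde\mu_s,\mathcal{A}^{\tilde\mu_s,\tilde\theta_s}\phi\rangle\,ds$ is a continuous martingale w.r.t.\ the (completed) natural filtration of $(\tilde\mu_0,\tilde\theta,\tilde\mu)$, with $\langle\tilde G(\phi_1),\tilde G(\phi_2)\rangle_t=\sigma_0^2\int_0^t\langle\tilde\mu_s,\phi_1'\rangle\langle\tilde\mu_s,\phi_2'\rangle\,ds$ obtained by the same argument applied to $G^N(\phi_1)G^N(\phi_2)-\langle G^N(\phi_1),G^N(\phi_2)\rangle$. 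This rank-one covariation structure means the whole family $\{\tilde G(\phi)\}_{\phi\in C_b^2}$ is driven by a single one-dimensional Brownian motion $\widetilde W^0$ (constructed by a Kunita--Watanabe/martingale-representation argument, on an enlarged space if necessary to absorb the random zeros of $\langle\tilde\mu_s,\phi'\rangle$; when $\sigma_0=0$ all these martingales vanish and $\widetilde W^0$ is any Brownian motion), giving $\tilde G_t(\phi)=\sigma_0\int_0^t\langle\tilde\mu_s,\phi'\rangle\,d\widetilde W^0_s$, i.e.\ precisely \eqref{eq:SFPK}.

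\textbf{Main obstacle.} I expect the hard part to be the limit passage inside $\int_0^t\langle\mu^N_s,\mathcal{A}^{\mu^N_s,\widehat\theta^N_s}\phi\rangle\,ds$ together with the identification of $\widetilde W^0$. Writing $\mathcal{A}^{m,\theta}\phi(x)=a\big(\int_E z\,m-x\big)\phi'(x)+u\theta\phi'(x)+\tfrac{\sigma^2+\sigma_0^2}{2}\phi''(x)$, the genuinely nonlinear mean-field term $\int z\,m$ forces convergence at the $\mathcal{P}_2$ level (hence the $(2+\varrho)$-moment assumption in \Asa, used via $|x\phi'(x)|\le C|x|$ to get uniform integrability of the linear functionals), while the coupling $\widehat\theta^N_s\langle\mu^N_s,u\phi'\rangle$ requires the strong $\mathcal{L}^2_T$-convergence of the controls (a weak-convergence argument would not suffice here); on the martingale side, upgrading from "a continuous martingale with the correct quadratic variation" to "the stochastic integral $\sigma_0\int\langle\tilde\mu_s,\phi'\rangle\,d\widetilde W^0_s$ against an honest Brownian motion" is the delicate common-noise step that needs the representation argument.
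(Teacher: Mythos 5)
Your proof is correct, and the first half (relative compactness in $\mathcal{P}_2(\hat S)$ via the uniform $(2+\varrho)$-moment bound, the synchronous-coupling modulus of continuity, and uniform integrability of $d_{\hat S}^2$) is essentially identical to the paper's Lemma \ref{lem:conv-1} and the concluding step of the proof of Theorem \ref{thm:convergence-empirical}. The identification step, however, takes a genuinely different route. The paper carries the common-noise Brownian motion through the limit: it applies the Skorokhod representation to the tuple including $\widehat W^{N,0}$, and then shows directly that the $L^2$-norm of $\langle\bar\mu^N_t,\phi\rangle-\langle\bar\mu^N_0,\phi\rangle-\int_0^t\langle\bar\mu^N_s,\mathcal{A}^{\bar\mu^N_s,\bar\theta^N_s}\phi\rangle ds-\sigma_0\int_0^t\langle\bar\mu^N_s,\phi'\rangle d\overline W^0_s$ is $O(N^{-1/2})$ (it equals the idiosyncratic martingale), passing each term to the limit by dominated convergence, Doob's inequality and Fatou; the limiting $\widetilde W^0$ is just the represented common noise, so no reconstruction is needed. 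You instead drop the common noise into a martingale problem: you show $\tilde G(\phi)$ is a continuous martingale with rank-one covariation $\sigma_0^2\int_0^t\langle\tilde\mu_s,\phi_1'\rangle\langle\tilde\mu_s,\phi_2'\rangle ds$ and then rebuild $\widetilde W^0$ by martingale representation. Your route is the more robust and standard one in the common-noise literature (it does not require arguing that stochastic integrals against $\overline W^0$ survive the Skorokhod representation, a point the paper treats somewhat informally), but its price is exactly the step you flag: producing a \emph{single} Brownian motion driving $\tilde G(\phi)$ for all $\phi$ simultaneously from the rank-one covariation structure, which requires an extension of the probability space to handle the zeros of $\langle\tilde\mu_s,\phi'\rangle$ and the degenerate case $\sigma_0=0$; since the theorem only asserts existence of \emph{some} Brownian motion $\widetilde W^0$ on a suitable space, this is acceptable, but you should state the representation theorem you invoke (e.g., the Doob--Knight representation of continuous local martingales, or its multi-test-function version as in Kurtz--Xiong) rather than leaving it as a remark.
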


\begin{remark}
In the special case when $\sigma_0=0$, the system only has idiosyncratic noises that is similar to the model in \cite{Bo-C-L20}. The resulting FPK equation is deterministic and easy to handle. However, with the presence of the common noise, the equation becomes a more challenging nonlinear stochastic PDE. One main contribution of the present paper is to show that the solution of \eqref{eq:SFPK} is unique under some mild conditions and hence $\{\Qx^N\}_{N\geq1}$ converges in $\mathcal{P}_2(\hat{S})$.
\end{remark}

The proof of Theorem \ref{thm:convergence-empirical} will be split into two steps: (i) we first verify the relative compactness of $\{\Qx_\mu^N\}_{N=1}^\infty\subset \mathcal{P}_2(S)$, where $\Qx_\mu^N:=Q_N\circ(\mu^N)^{-1}$; (ii) we then prove that for any limit of a convergent subsequence of $\{\Qx^N\}_{N=1}^\infty$, there exists an $\hat{S}$-valued r.v. $(\tilde{\mu}_0,\tilde{\theta},\tilde{\mu})$ defined on some probability space $(\widetilde{\Omega},\widetilde{\mathcal{F}},\widetilde{\Px})$ satisfying the stochastic FPK equation \eqref{eq:SFPK}.
The next result, whose proof is reported in Appendix \ref{app:proof1}, completes the aformentioned first step.
\begin{lemma}\label{lem:conv-1}
 Under assumptions {\Asa} and {\Ath}, $\{\Qx_\mu^N\}_{N=1}^\infty$ is relatively compact in $\mathcal{P}_2(S)$.
\end{lemma}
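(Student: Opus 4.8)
The plan is to prove relative compactness of $\{\Qx_\mu^N\}_{N\geq1}$ in $\mathcal{P}_2(S)$ by splitting it, via Prokhorov's theorem, into two ingredients: \emph{tightness of the laws $\{\Qx_\mu^N\}_{N\geq1}$ on the Polish space $(S,d_S)$} in the weak topology, and a \emph{uniform super-quadratic moment bound} on $\mu^N$ regarded as an $S$-valued random element, the latter being exactly what upgrades weak relative compactness to relative compactness in the $2$-Wasserstein space over $S$. Everything is powered by the two estimates of Lemma~\ref{lem:estimates}, the pointwise bound $|\xi^i|\le K$ from {\Asa}, and the standing second-moment control on the benchmarks $\widehat{Y}^{N,i}$ inherited from the moment condition imposed on $\nu$.

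The first step is to transfer the pathwise state estimates to the measure flow through the \emph{index coupling}: matching the $i$-th atom of $\mu^N_t$ to the $i$-th atom of $\mu^N_s$ (admissible since the first two coordinates $(\xi^i,\widehat{Y}^{N,i})$ do not depend on time) gives, for a fixed reference point $e_0\in E$,
\[
\mathcal{W}_{E,2}(\mu^N_t,\mu^N_s)^2\le\frac1N\sum_{i=1}^N\bigl|\widehat{X}^{N,i}_t-\widehat{X}^{N,i}_s\bigr|^2,\qquad
\mathcal{W}_{E,2}(\mu^N_t,\delta_{e_0})^2\le\frac{C}{N}\sum_{i=1}^N\bigl(|\xi^i|^2+|\widehat{Y}^{N,i}|^2+|\widehat{X}^{N,i}_t|^2\bigr).
\]
Since the $\widehat{X}^{N,i}$ have continuous paths, the first bound already confirms $\mu^N\in S$. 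Raising the second bound to the power $(2+\varrho)/2\ge1$, applying Jensen's inequality and then \eqref{eq:x-bound}, I would obtain $\sup_{N}\Ex^{Q_N}[\sup_{t}\mathcal{W}_{E,2}(\mu^N_t,\delta_{e_0})^{2+\varrho}]<\infty$; this both places $\Qx_\mu^N$ in $\mathcal{P}_2(S)$ and supplies the uniform integrability needed at the end. Feeding the first bound into the second estimate of \eqref{eq:x-bound} yields the uniform-in-$N$ modulus of continuity $\lim_{\delta\to0}\sup_{N}\Ex^{Q_N}[\sup_{|t-s|\le\delta}\mathcal{W}_{E,2}(\mu^N_t,\mu^N_s)^2]=0$.

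For tightness on $S$ I would invoke the Arzel\`a--Ascoli description of precompact subsets of $C([0,T];\mathcal{P}_2(E))$: uniform equicontinuity together with relative compactness, for each $t$, of the time-$t$ sections in $(\mathcal{P}_2(E),\mathcal{W}_{E,2})$; as $\mathcal{P}_2(E)$ is not locally compact, the latter must be checked via the tightness-plus-uniform-$2$-integrability criterion for Wasserstein convergence. Concretely, given $\varepsilon>0$, I would build the event
\[
G_N:=\Bigl\{\sup_{t}\mathcal{W}_{E,2}(\mu^N_t,\delta_{e_0})\le R\Bigr\}\cap\Bigl\{\sup_{t}\!\int_{|e-e_0|\ge R}\!|e-e_0|^2\,\mu^N_t(de)\le\varepsilon\Bigr\}\cap\Bigl\{\sup_{|t-s|\le\delta}\mathcal{W}_{E,2}(\mu^N_t,\mu^N_s)\le\eta\Bigr\},
\]
and, using Markov's inequality against the three moment estimates of the previous step (for the middle event writing $|e-e_0|^2\mathbf{1}_{\{|e-e_0|\ge R\}}\le R^{-\varrho}|e-e_0|^{2+\varrho}$), choose $R$ large and $\eta,\delta$ small, uniformly in $N$, so that $Q_N(G_N)\ge1-\varepsilon$. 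On $G_N$ the trajectory $\mu^N$ lies in a fixed relatively compact subset of $S$, which gives tightness of $\{\Qx_\mu^N\}_{N\geq1}$ on $(S,d_S)$. Combining this with the uniform $(2+\varrho)$-moment bound and the standard characterization of relative compactness in the $2$-Wasserstein space over a Polish space (weak tightness plus uniform integrability of the squared distance to a base point, which the $(2+\varrho)$-moment bound secures) closes the proof.

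The step I expect to be the main obstacle is the uniform-in-$t$-and-$N$ compact containment of the random measures $\mu^N_t$ in $\mathcal{P}_2(E)$: because $\mathcal{P}_2(E)$ is infinite-dimensional, a mere moment bound is insufficient, and one genuinely needs the \emph{strictly} super-quadratic exponent $2+\varrho$ afforded by {\Asa} and Lemma~\ref{lem:estimates} to control the $2$-integrability tail $\sup_t\int_{|e-e_0|\ge R}|e-e_0|^2\,\mu^N_t(de)$ uniformly. The equicontinuity part, by contrast, follows almost immediately from the second estimate in \eqref{eq:x-bound} and is routine.
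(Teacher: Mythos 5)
Your proposal follows essentially the same route as the paper: the index (synchronous) coupling transfers the estimates of Lemma~\ref{lem:estimates} to $\mathcal{W}_{E,2}$-bounds on $\mu^N$, tightness on $S$ is obtained from compact containment of the time sections in $\mathcal{P}_2(E)$ (driven by the $(2+\varrho)$-moment and the tightness-plus-uniform-$2$-integrability criterion of Theorem 7.12 in \cite{Villani03}) together with equicontinuity from the modulus estimate in \eqref{eq:x-bound}, and the upgrade from $\mathcal{P}(S)$ to $\mathcal{P}_2(S)$ uses exactly the uniform $d_S^{2+\varrho}$-moment bound, as in the paper. The only slip is cosmetic: a single choice of $(R,\varepsilon,\eta,\delta)$ in your event $G_N$ does not confine $\mu^N$ to a relatively compact subset of $S$ (both the $2$-integrability tail and the modulus of continuity must vanish, not merely be small at one scale), so one must intersect countably many such constraints with summable probability budgets, precisely as the paper does with $\mathcal{M}_1(M)\cap\bigcap_{k\geq1}\mathcal{M}_2(\delta_k,k^{-1})$; this is a routine repair and does not affect the argument.
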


By Prokhorov's theorem, $\{\Qx^N\}_{N\geq 1}$ is relatively compact, and we have the next result.

\begin{lemma}\label{lem:conv-2}
Under the assumptions in Theorem \ref{thm:convergence-empirical}, if the law of an $\hat{S}$-valued r.v. $(\tilde{\mu}_0,\tilde{\theta},\tilde{\mu})$, defined on some probability space $(\widetilde{\Omega},\widetilde{\mathcal{F}},\widetilde{\Px})$, is the limit of a convergent subsequence of $\{\Qx^N\}_{N\geq1}$, then $\tilde{\mu}$ satisfies the stochastic FPK equation \eqref{eq:SFPK}.
\end{lemma}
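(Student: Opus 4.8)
The plan is to start from the exact identity for the $N$-bank empirical measure furnished by It\^o's formula, to discard an idiosyncratic martingale term of size $O(N^{-1/2})$, and then to pass to the limit along the given convergent subsequence after enlarging the observed vector so that the common Brownian motion $\widehat W^{N,0}$ and the stochastic-integral process it drives are transported together with $(\mu^N_0,\widehat\theta^N,\mu^N)$; the limiting common noise $\widetilde W^0$ then emerges from this enlargement rather than being constructed by hand. Throughout, $\langle m,g\rangle:=\int_E g\,dm$ denotes integration against $m\in\mathcal{P}_2(E)$ of a function $g$ on $E$ (or on $\R$, acting on the state coordinate), and $\bar m$ denotes the mean of the state coordinate under $m$.

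First we apply It\^o's formula to $\phi(\widehat X^{N,i}_t)$, $\phi\in C_b^2(\R)$, $i=1,\dots,N$, use $\frac1N\sum_{j=1}^N(\widehat X^{N,j}_t-\widehat X^{N,i}_t)=\bar{\mu}^N_t-\widehat X^{N,i}_t$, and average over $i$ to get
\begin{equation}\label{eq:Ito-avg-plan}
\langle\mu^N_t,\phi\rangle=\langle\mu^N_0,\phi\rangle+\int_0^t\langle\mu^N_s,\mathcal{A}^{\mu^N_s,\widehat\theta^N_s}\phi\rangle\,ds+M^{N,\phi}_t+\sigma_0\int_0^t\langle\mu^N_s,\phi'\rangle\,d\widehat W^{N,0}_s ,
\end{equation}
with $M^{N,\phi}_t:=\frac1N\sum_{i=1}^N\sigma_i\int_0^t\phi'(\widehat X^{N,i}_s)\,d\widehat W^{N,i}_s$. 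Because the $\widehat W^{N,i}$, $i\ge1$, are independent and $|\sigma_i|\le K$ by {\Asa}, the Burkholder--Davis--Gundy inequality gives $\Ex^{Q_N}[\sup_{t\le T}|M^{N,\phi}_t|^{2+\varrho}]\le C(K^2\|\phi'\|_\infty^2 T/N)^{1+\varrho/2}\to0$. Writing $Z^{N,\phi}_t:=\sigma_0\int_0^t\langle\mu^N_s,\phi'\rangle\,d\widehat W^{N,0}_s$, we record that $\widehat W^{N,0}$, $Z^{N,\phi}$, $(Z^{N,\phi})^2-\sigma_0^2\int_0^\cdot\langle\mu^N_s,\phi'\rangle^2\,ds$ and $Z^{N,\phi}\widehat W^{N,0}-\sigma_0\int_0^\cdot\langle\mu^N_s,\phi'\rangle\,ds$ are continuous $Q_N$-martingales, and that Lemma~\ref{lem:estimates} makes $M^{N,\phi}_t$ and $Z^{N,\phi}_t$ bounded in $L^{2+\varrho}$ uniformly in $N$ and $t$ --- this is where the $(2+\varrho)$-moment in {\Asa} is used, so that the squares appearing below are uniformly integrable.

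Next, with $\phi$ fixed, we prove tightness of the laws of $\Xi^{N,\phi}:=(\mu^N_0,\widehat\theta^N,\mu^N,\widehat W^{N,0},Z^{N,\phi})$ on $\hat{S}\times\mathcal{C}_T\times\mathcal{C}_T$: the $(\mu_0,\theta,\mu)$-block is tight by \eqref{eq:convercondThm4-1} and Lemma~\ref{lem:conv-1}, $\widehat W^{N,0}$ has the fixed Wiener law, and $\{Z^{N,\phi}\}_N$ is tight by the Kolmogorov/Aldous criterion from the uniform bound on $\langle Z^{N,\phi}\rangle$ (using $\|\phi'\|_\infty<\infty$). Along the given subsequence on which the law of $(\mu^N_0,\widehat\theta^N,\mu^N)$ converges to that of $(\tilde\mu_0,\tilde\theta,\tilde\mu)$, we extract a further subsequence along which the law of $\Xi^{N,\phi}$ converges --- its $(\mu_0,\theta,\mu)$-marginal still being the law of $(\tilde\mu_0,\tilde\theta,\tilde\mu)$ --- and then apply Skorokhod's representation theorem to realize all the variables on $(\widetilde\Omega,\widetilde{\mathcal F},\widetilde\Px)$ with $(\mu^N_0,\widehat\theta^N,\mu^N,\widehat W^{N,0},Z^{N,\phi})\to(\tilde\mu_0,\tilde\theta,\tilde\mu,\widetilde W^0,\widetilde Z^\phi)$ $\widetilde\Px$-a.s.; on this space \eqref{eq:Ito-avg-plan} becomes the pathwise identity $\langle\mu^N_t,\phi\rangle-\langle\mu^N_0,\phi\rangle-\int_0^t\langle\mu^N_s,\mathcal{A}^{\mu^N_s,\widehat\theta^N_s}\phi\rangle\,ds-Z^{N,\phi}_t=M^{N,\phi}_t$, whose right side still tends to $0$ in $L^2$. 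To pass to the limit we use $\langle\mu^N_t,\phi\rangle\to\langle\tilde\mu_t,\phi\rangle$ (since $\phi\in C_b(\R)$ and $\mu^N_t\to\tilde\mu_t$ in $\mathcal{W}_{E,2}$), and for the drift we split $\langle m,\mathcal{A}^{m,\theta}\phi\rangle=\Phi(m)+\theta\,\Psi(m)$ with $\Psi(m):=\langle m,u\phi'\rangle$ bounded and $\mathcal{W}_{E,2}$-continuous, and $\Phi(m):=\bar m\,\langle m,a\phi'\rangle-\langle m,a x\phi'(x)\rangle+\tfrac12\langle m,(\sigma^2+\sigma_0^2)\phi''\rangle$; since all $\mu^N_s$ and $\tilde\mu_s$ are supported in $\{|(a,u,\sigma)|\le K\}$ by {\Asa}, $\Phi$ is $\mathcal{W}_{E,2}$-continuous with $|\Phi(m)|\le C(1+\langle m,|x|^2\rangle^{1/2})$, the linear-growth pieces $\bar m$ and $\langle m,a x\phi'(x)\rangle$ being controlled through the fact that $\mathcal{W}_{E,2}$-convergence is equivalent to weak convergence together with convergence of second moments (\cite{Villani03}). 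Using $d_S(\mu^N,\tilde\mu)\to0$ to secure a finite a.s.\ bound on $\sup_N\sup_{s\le T}\langle\mu^N_s,|x|^2\rangle$, the pointwise convergences $\Phi(\mu^N_s)\to\Phi(\tilde\mu_s)$ and $\Psi(\mu^N_s)\to\Psi(\tilde\mu_s)$ for each $s$, the $\mathcal{L}_T^2$-convergence $\widehat\theta^N\to\tilde\theta$, and the boundedness $\widehat\theta^N_s\in\Theta$ (compact by {\Ath}), dominated convergence yields $\int_0^t\langle\mu^N_s,\mathcal{A}^{\mu^N_s,\widehat\theta^N_s}\phi\rangle\,ds\to\int_0^t\langle\tilde\mu_s,\mathcal{A}^{\tilde\mu_s,\tilde\theta_s}\phi\rangle\,ds$ a.s., hence $Z^{N,\phi}_t\to\widetilde Z^\phi_t:=\langle\tilde\mu_t,\phi\rangle-\langle\tilde\mu_0,\phi\rangle-\int_0^t\langle\tilde\mu_s,\mathcal{A}^{\tilde\mu_s,\tilde\theta_s}\phi\rangle\,ds$. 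The uniform $L^{2+\varrho}$-bounds above then allow passing the four martingale relations to the limit, so that $\widetilde W^0$, $\widetilde Z^\phi$, $(\widetilde Z^\phi)^2-\sigma_0^2\int_0^\cdot\langle\tilde\mu_s,\phi'\rangle^2\,ds$ and $\widetilde Z^\phi\widetilde W^0-\sigma_0\int_0^\cdot\langle\tilde\mu_s,\phi'\rangle\,ds$ are martingales for the filtration generated by $(\tilde\mu,\tilde\theta,\widetilde W^0)$; in particular $\widetilde W^0$ is a standard Brownian motion by L\'evy's characterization, and a computation of the quadratic variation of $\widetilde Z^\phi-\sigma_0\int_0^\cdot\langle\tilde\mu_s,\phi'\rangle\,d\widetilde W^0_s$ from these brackets shows it vanishes, so this continuous martingale, null at $0$, is identically $0$; that is, $\widetilde Z^\phi_t=\sigma_0\int_0^t\langle\tilde\mu_s,\phi'\rangle\,d\widetilde W^0_s$, and rearranging gives \eqref{eq:SFPK}. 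A countable convergence-determining family of test functions together with continuity of both sides in $\phi$ then fixes a single exceptional null set.

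The main obstacle will be the passage to the limit in the drift term. Because the interaction $\bar m$ and the term $a x\phi'(x)$ have only linear growth in the state variable, mere weak convergence of the $\mu^N_s$ does not suffice; we must lean on the uniform moment estimate of Lemma~\ref{lem:estimates}, equivalently $\sup_N\sup_{s\le T}\Ex^{Q_N}\langle\mu^N_s,|x|^{2+\varrho}\rangle<\infty$, both to upgrade the convergence to the $\mathcal{W}_{E,2}$-level and to supply the integrable dominating function for the $ds$-integral, while simultaneously reconciling the merely $\mathcal{L}_T^2$-type convergence of $\widehat\theta^N$ with the $\mathcal{W}_{E,2}$-type convergence of $\mu^N$. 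A lesser but genuine technicality is the bookkeeping needed to transport the stochastic-integral and martingale structure of $Z^{N,\phi}$ (and of its brackets with $\widehat W^{N,0}$) through the Skorokhod coupling; we sidestep this by putting $Z^{N,\phi}$ and $\widehat W^{N,0}$ into the observed vector from the outset, rather than invoking a stochastic-integral stability theorem directly.
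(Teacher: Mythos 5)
Your proof is correct, and it shares the paper's skeleton — the averaged It\^o identity for $\langle\mu^N_t,\phi\rangle$, the $O(N^{-1/2})$ idiosyncratic martingale killed by BDG, a Skorokhod coupling along the convergent subsequence, and the drift passage to the limit via Wasserstein continuity (Theorem 7.12 of \cite{Villani03}) plus a dominating bound drawn from $d_S(\mu^N,\tilde\mu)\to0$ — but it treats the common-noise term by a genuinely different device. The paper keeps a single Brownian motion $\overline{W}^0$ on the Skorokhod space, rewrites the $N$-level defect there by equality of laws, and shows $\int_0^\cdot\langle\bar\mu^N_s,\phi'\rangle d\overline{W}^0_s\to\int_0^\cdot\langle\bar\mu_s,\phi'\rangle d\overline{W}^0_s$ in $L^2$ by Doob's inequality, concluding with Fatou; you instead enlarge the coupled vector to include $\widehat W^{N,0}$ and the integral process $Z^{N,\phi}$, pass the martingale relations for $Z^{N,\phi}$, its bracket and its covariation with $\widehat W^{N,0}$ to the limit, and identify $\widetilde Z^\phi=\sigma_0\int_0^\cdot\langle\tilde\mu_s,\phi'\rangle d\widetilde W^0_s$ by L\'evy's characterization and a vanishing-bracket computation. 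What your route buys is precisely the point the paper glosses over: how the stochastic-integral structure (and the driving Brownian motion) survives the Skorokhod coupling — you never need to assert that an integral against a coupled Brownian motion has the right joint law, since $\widetilde W^0$ and $\widetilde Z^\phi$ arrive as limit objects; the price is an extra subsequence extraction and the martingale-problem bookkeeping, whereas the paper's argument is shorter. Two cosmetic remarks: the uniform $L^{2+\varrho}$ control of $Z^{N,\phi}$ is not really where {\Asa}'s extra moment enters (boundedness of $\phi'$ already gives all moments of $Z^{N,\phi}$ and the UI you need); the $(2+\varrho)$-moment is what powers Lemma~\ref{lem:conv-1} and the quadratic domination of the drift. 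And, like the paper, you realize the limit and its Brownian motion on the representation space and transfer the conclusion to $(\widetilde\Omega,\widetilde{\mathcal F},\widetilde\Px)$ by equality of laws, which strictly speaking may require an extension of that space to carry $\widetilde W^0$ — an informality the paper's own Fatou step commits as well, so no correction is needed on your side.
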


\begin{proof}
As $\widetilde{\Px}\circ(\tilde{\mu}_0,\tilde{\theta},\tilde{\mu})^{-1}$ is the weak limit of a convergent subsequence of $\{\Qx^N\}_{N\geq 1}$, still denoted by $\{\Qx^N\}_{N\geq 1}$, it follows from the Skorokhod representation theorem that there exist $\hat{S}$-valued random variables $(\bar{\mu}_0^N,\bar{\theta}^N,\bar{\mu}^N)$, $(\bar{\mu}_0,\bar{\theta},\bar{\mu})$ and the process $\overline{W}^0$ defined on some probability space $(\overline{\Omega},\overline{\mathcal{F}},\overline{\Px})$, such that $\overline{\Px}\circ(\bar{\mu}_0^N,\bar{\theta}^N,\bar{\mu}^N)^{-1}=\Qx^N$, $\overline{\Px}\circ(\bar{\mu}_0,\bar{\theta},\bar{\mu})^{-1} =\widetilde{\Px}\circ(\tilde{\mu}_0,\tilde{\theta},\tilde{\mu})^{-1}$, $\overline{W}^0$ is a Brownian motion under $(\overline{\Omega},\overline{\mathcal{F}},\overline{\Px})$, and
\begin{equation}\label{eq:mu^N-converge}
d_{\hat{S}}((\bar{\mu}_0^N,\bar{\theta}^N,\bar{\mu}^N),(\bar{\mu}_0,\bar{\theta},\bar{\mu}))\to 0,\quad N\to\infty,\ \widehat{\Px}\textrm{-a.s.}
\end{equation}
Let $\overline{\Ex}$ and $\widetilde{\Ex}$ be the expectation under probability measures $\overline{\Px}$ and $\widetilde{\Px}$ respectively. As in \eqref{eq:generator}, we consider the test function $\phi\in C_b^2(\R)$ in the rest of the proof. Also, let $C_\phi>0$ be a generic constant depending only on $\phi$ and the bound $K$ in the assumption {\Asa} that may change from place to place. First of all, we can deduce that
\begin{align*}
  &\lim_{N\to\infty}\overline{\Ex}\left[\sup_{t\in[0,T]}\left| \langle\bar{\mu}_t^N,\phi\rangle -\langle\bar{\mu}_0^N,\phi\rangle
  -\int_0^t \langle\bar{\mu}_s^N,\mathcal{A}^{\bar{\mu}_s^N,\bar{\theta}^N_s}\phi\rangle ds
  -\sigma_0\int_0^t \langle \bar{\mu}_s^N,\phi' \rangle d\overline{W}^0_s\right|^2\right]\\
  =&\lim_{N\to\infty}\Ex^{Q_N}\left[\sup_{t\in[0,T]}\left| \langle\mu_t^N,\phi\rangle -\langle\mu_0^N,\phi\rangle
  -\int_0^t \langle\mu_s^N,\mathcal{A}^{\mu_s^N,\widehat{\theta}^N_s}\phi\rangle ds
  -\sigma_0\int_0^t \langle \mu_s^N,\phi' \rangle d\widehat{W}^0_s\right|^2\right]\\
  =&\lim_{N\to\infty}\Ex^{Q_N}\left[\sup_{t\in[0,T]}\left| \frac{1}{N}\sum_{i=1}^N\sigma_i\int_0^t \langle \mu^N_s,\phi' \rangle\,d\widehat{W}^i_s \right|^2\right]\leq \lim_{N\to\infty}\frac{C_\phi}{N}=0.
\end{align*}
Moreover, by the definition in \eqref{eq:generator}, it holds that
\begin{equation}\label{eq:Upsilon}
\begin{aligned}
  &\left|\int_0^t \langle\bar{\mu}_s^N,\mathcal{A}^{\bar{\mu}_s^N,\bar{\theta}^N_s}\phi\rangle ds
  -\int_0^t \langle\bar{\mu}_s,\mathcal{A}^{\bar{\mu}_s,\bar{\theta}_s}\phi\rangle ds\right|\\
  \leq&\left|\int_0^t \langle\bar{\mu}_s^N,\mathcal{A}^{\bar{\mu}_s^N,\bar{\theta}^N_s}\phi\rangle ds
  -\int_0^t \langle\bar{\mu}_s^N,\mathcal{A}^{\bar{\mu}_s,\bar{\theta}_s}\phi\rangle ds\right| +\left|\int_0^t \langle\bar{\mu}_s^N,\mathcal{A}^{\bar{\mu}_s,\bar{\theta}_s}\phi\rangle ds
  -\int_0^t \langle\bar{\mu}_s,\mathcal{A}^{\bar{\mu}_s,\bar{\theta}_s}\phi\rangle ds\right|\\
  \leq&C_\phi\left(\|\bar{\theta}^N-\bar{\theta}\|_{\mathcal{L}_T^2}+d_S(\bar{\mu}^N,\bar{\mu})\right)+\left|\int_0^t \langle\bar{\mu}_s^N,\mathcal{A}^{\bar{\mu}_s,\bar{\theta}_s}\phi\rangle ds
  -\int_0^t \langle\bar{\mu}_s,\mathcal{A}^{\bar{\mu}_s,\bar{\theta}_s}\phi\rangle ds\right|.
\end{aligned}
\end{equation}
Recall \eqref{eq:generator} and $\bar{\mu}\in S$, we have $|\mathcal{A}^{\bar{\mu}_s,\bar{\theta}_s}\phi(x)|\leq C_\phi(1+|e|^2)$ for all $(s,e)\in[0,T]\times E$, where $e=(a,u,\sigma,y,x)$. Then, by \eqref{eq:mu^N-converge} and Theorem 7.12 of \cite{Villani03}, $\langle\bar{\mu}_s^N,\mathcal{A}^{\bar{\mu}_s,\bar{\theta}_s}\phi\rangle$ converges to $\langle\bar{\mu}_s,\mathcal{A}^{\bar{\mu}_s,\bar{\theta}_s}\phi\rangle$, as $N\to\infty$, $\overline{\Px}$-a.s.. Moreover, for $\ell(e):=|e|^2$, it holds that
\begin{align*}
\sup_{N\geq 1}\left|\langle\bar{\mu}_s^N,\mathcal{A}^{\bar{\mu}_s,\bar{\theta}_s}\phi\rangle\right|\leq C_\phi\left(1+\sup_{N\geq1}\langle\bar{\mu}_s^N,\ell\rangle\right)\leq C_\phi\left(1+\mathcal{W}_{E,2}(\bar{\mu}_s,\delta_0)+\sup_{N\geq1}d_S(\bar{\mu}^N,\bar{\mu})\right).
\end{align*}
The dominated convergence theorem implies that
\[\left|\int_0^t \langle\bar{\mu}_s^N,\mathcal{A}^{\bar{\mu}_s,\bar{\theta}_s}\phi\rangle ds
-\int_0^t \langle\bar{\mu}_s,\mathcal{A}^{\bar{\mu}_s,\bar{\theta}_s}\phi\rangle ds\right|\to 0,\quad N\to\infty,~~\overline{\Px}\textrm{-a.s.}.\]
and thus \eqref{eq:Upsilon} tends to 0 when $N\to\infty$, $\overline{\Px}\textrm{-a.s.}$.

For the stochastic integral term, it's easy to verify that $|\langle\bar{\mu}^N_s,\phi' \rangle-\langle \bar{\mu}_s,\phi' \rangle|^2\to 0$, $N\to\infty$,~ $\overline{\Px}$-a.s..
By the same reasoning as above and the Doob's maximal inequality, we have that
\[\bar{\Ex}\left[\sup_{t\in[0,T]}\left|\int_0^t (\langle \bar{\mu}^N_s,\phi' \rangle-\langle \bar{\mu}_s,\phi' \rangle)d\overline{W}^0_s\right|^2\right]\leq4\overline{\Ex}\left[\int_0^t \left| \langle \bar{\mu}^N_s,\phi' \rangle-\langle \bar{\mu}_s,\phi' \rangle \right|^2ds\right]\to 0,\]
when $N\to\infty$. It then follows from Fatou's lemma that
\begin{align*}
&~~~~\widetilde{\Ex}\left[\sup_{t\in[0,T]}\left| \langle\tilde{\mu}_t,\phi\rangle -\langle\tilde{\mu}_0,\phi\rangle
-\int_0^t \langle\tilde{\mu}_s,\mathcal{A}^{\tilde{\mu}_s,\tilde{\theta}_s}\phi\rangle ds
-\sigma_0\int_0^t \langle \tilde{\mu}_s,\phi' \rangle d\widetilde{W}^0_s\right|^2\right]\\
&=\overline{\Ex}\left[\sup_{t\in[0,T]}\left| \langle\bar{\mu}_t,\phi\rangle -\langle\bar{\mu}_0,\phi\rangle
-\int_0^t \langle\bar{\mu}_s,\mathcal{A}^{\bar{\mu}_s,\bar{\theta}_s}\phi\rangle ds
-\sigma_0\int_0^t \langle \bar{\mu}_s,\phi' \rangle d\overline{W}^0_s\right|^2\right]\\
&\leq\liminf_{N\to\infty}\overline{\Ex}\left[\sup_{t\in[0,T]}\left| \langle\bar{\mu}_t^N,\phi\rangle -\langle\bar{\mu}_0^N,\phi\rangle
-\int_0^t \langle\bar{\mu}_s^N,\mathcal{A}^{\bar{\mu}_s^N,\bar{\theta}^N_s}\phi\rangle ds
-\sigma_0\int_0^t \langle \bar{\mu}_s^N,\phi' \rangle d\overline{W}^0_s\right|^2\right]=0.
\end{align*}
As a result, \eqref{eq:SFPK} holds $\widetilde{\Px}$-a.s. as desired.
\end{proof}

The proof of the main result in this subsection readily follows.

\begin{proof}[Proof of Theorem \ref{thm:convergence-empirical}]
By Lemma \ref{lem:conv-2}, we have shown that any convergent subsequence of $\{\Qx^N\}_{N\geq1}$, still denoted by $\{\Qx^N\}_{N\geq1}$, converges weakly to some $\Qx\in\mathcal{P}(\hat{S})$. Note that $Q_N\circ(\mu_0^N)^{-1}$ for $N=1,2,\ldots$ are supported in a compact set and $\Theta\subset\R$ is also compact. By Lemma \ref{lem:conv-1}, $\Qx^N_\mu=Q_N\circ(\mu^N)^{-1}\in\mathcal{P}_2(S)$, and hence we have that
\begin{equation*}
  \lim_{R\to\infty}\sup_{N\geq1}\int_{\{\rho\in S:~d_S^2(\rho,\hat{\rho})\geq R)\}}d_S^2(\rho,\hat{\rho})\Qx_\mu^N(d\rho)=0.
\end{equation*}
Thanks to Theorem 7.12 of \cite{Villani03}, we have that $\mathcal{W}_{\hat{S},2}(\Qx^N,\Qx)\to0$ as $N\to\infty$.
\end{proof}

Inspired by \cite{KurtzXiong99}, we can further prove that the stochastic FPK \eqref{eq:SFPK} has a unique solution under some additional conditions, and thus  $\{\Qx^N\}_{N\geq 1}$ has a unique weak limit. Note that it is assumed that the coefficients of the FPK equation in \cite{KurtzXiong99} are deterministic, uniformly bounded and Lipschitz continuous, which is not satisfied in our framework. Some distinct arguments are required to conclude the uniqueness of the solution to \eqref{eq:SFPK}. For the reader's convenience, we recall the stochastic FPK equation \eqref{eq:SFPK} under the probability space $(\widetilde{\Omega},\widetilde{\mathcal{F}},\widetilde{\Px})$ in Theorem~\ref{thm:convergence-empirical} that
\begin{align*}
\langle \tilde{\mu}_t,\phi \rangle=\langle \tilde{\mu}_0,\phi \rangle+\int_0^t \langle \tilde{\mu}_s,\mathcal{A}^{\tilde{\mu}_s,\tilde{\theta}_s}\phi \rangle ds+\sigma_0\int_0^t \langle \tilde{\mu}_s,\phi' \rangle d\widetilde{W}^0_s,\quad \forall~\phi\in C_b^2(\R),
\end{align*}
with the $\hat{S}$-valued r.v. $(\tilde{\mu}_0,\tilde{\theta},\tilde{\mu})$ and Brownian motion $\widetilde{W}^0$. We then introduce an auxiliary linear SPDE that, for a given $\nu\in S$,
\begin{equation}\label{eq:linear-equation}
\langle \vartheta_t,\phi \rangle=\langle \tilde{\mu}_0,\phi \rangle+\int_0^t \langle \vartheta_s,\mathcal{A}^{\nu_s,\tilde{\theta}_s}\phi \rangle ds+\sigma_0\int_0^t \langle \vartheta_s,\phi' \rangle d\widetilde{W}^0_s,\quad \forall~\phi\in C_b^2(\R),
\end{equation}
Using the methods similar to \cite{Kotelenez95} and \cite{KurtzXiong99}, we can obtain the next result, and its proof is reported in Appendix \ref{app:proof1}.
\begin{lemma}\label{lem:linear-wellpo}
If $\tilde{\mu}_0$ has a square-integrable density w.r.t. Lebesgue measure, the linear SPDE \eqref{eq:linear-equation} admits at most one solution.
\end{lemma}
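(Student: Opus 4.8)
The plan is to establish uniqueness for the linear SPDE \eqref{eq:linear-equation} by a duality (or ``transposition'') argument combined with an $L^2$-energy estimate, following the strategy of \cite{Kotelenez95} and \cite{KurtzXiong99} but adapting it to the present situation where the drift coefficient $g_{a,u}^{\nu_s,\tilde\theta_s}$ is random (through $\tilde\theta$) and only affinely growing in the spatial variable rather than bounded. First I would observe that, since $\nu\in S$ is fixed and $\tilde\theta\in\mathcal{L}_T^2$ is given, equation \eqref{eq:linear-equation} is genuinely \emph{linear} in the unknown measure flow $\vartheta$: the operator $\mathcal{A}^{\nu_s,\tilde\theta_s}$ has the explicit form
\[
\mathcal{A}^{\nu_s,\tilde\theta_s}\phi(x)=\Big(a\big(\textstyle\int_E z\,\nu_s(d\xi,dy,dz)-x\big)+u\tilde\theta_s\Big)\phi'(x)+\frac{\sigma^2+\sigma_0^2}{2}\phi''(x),
\]
so the map $m_s:=\int_E z\,\nu_s(\cdot)$ is a prescribed bounded continuous function of $s$ (bounded because $\nu\in S$ and assumption \Asa\ control the second moments), and the ``drift'' $b_s(x):=a(m_s-x)+u\tilde\theta_s$ is affine in $x$ with a coefficient that is uniformly bounded on $[0,T]\times\mathbb{R}$ only after we also use that $a,u$ are bounded by $K$. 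Thus the difference $\eta:=\vartheta^{(1)}-\vartheta^{(2)}$ of two solutions satisfies the \emph{same} equation with zero initial condition, and uniqueness reduces to showing $\eta\equiv 0$.

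The core step is a duality identity. For a fixed $T'\in(0,T]$ and a test function $\psi\in C_c^\infty(\mathbb{R})$, I would introduce the backward (dual) SPDE on $[0,T']$,
\[
d\Psi_t(x)=-\mathcal{A}^{\nu_t,\tilde\theta_t}\Psi_t(x)\,dt-\sigma_0\,\partial_x\Psi_t(x)\,d\widetilde W^0_t+\Upsilon_t(x)\,d\widetilde W^0_t,\qquad \Psi_{T'}=\psi,
\]
i.e. a linear backward stochastic PDE (of Kushner--Zakai type) whose well-posedness in a suitable weighted Sobolev space follows from standard linear SPDE theory (e.g. Krylov's $L^2$-theory, or the Pardoux--Rozovskii framework) precisely because the coefficients are affine with bounded-in-$x$ leading part and the second-order coefficient $(\sigma^2+\sigma_0^2)/2$ is a strictly positive constant; here $\Upsilon$ is the correction martingale term making the equation adapted. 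Applying the Itô--Wentzell formula to $\langle\eta_t,\Psi_t\rangle$ and using that $\eta$ solves \eqref{eq:linear-equation} with zero data while $\Psi$ solves the dual equation, all the drift and stochastic-integral cross terms cancel, leaving $\widetilde{\mathbb{E}}\langle\eta_{T'},\psi\rangle=\langle\eta_0,\Psi_0\rangle=0$. Since $T'\in(0,T]$ and $\psi\in C_c^\infty(\mathbb{R})$ are arbitrary, this forces $\widetilde{\mathbb{E}}\langle\eta_t,\psi\rangle=0$ for all $t$ and all $\psi$, and then a monotone-class argument upgrades this to $\eta_t=0$ $\widetilde{\mathbb{P}}$-a.s. for every $t$, hence (by continuity of the flows in $S$) $\eta\equiv 0$. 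The hypothesis that $\tilde\mu_0$ has a square-integrable density enters here: it guarantees that $\eta_0\in L^2(\mathbb{R})$ (indeed $\eta_0=0$ in the present reduction, but more importantly it is needed to run the energy estimate for the dual problem and to justify pairing $\eta_t$, which a priori is only a signed measure, against $\Psi_t\in L^2$ — one first shows each solution $\vartheta^{(i)}_t$ inherits a square-integrable density from $\tilde\mu_0$ via the regularizing effect of the nondegenerate Laplacian term, so that $\eta_t\in L^2(\mathbb{R})$ and the duality pairing is legitimate).

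The main obstacle I anticipate is not the duality bookkeeping but the \emph{well-posedness and regularity of the dual backward SPDE} under the unbounded (affine) drift: one cannot directly quote \cite{KurtzXiong99}, whose standing assumptions demand bounded Lipschitz coefficients. To get around this I would work in an exponentially weighted space $L^2(\mathbb{R},e^{-\lambda|x|^2}dx)$ or $L^2(\mathbb{R},(1+|x|^2)^{-k}dx)$ — the linear growth of $b_s(x)=a(m_s-x)+u\tilde\theta_s$ is exactly the borderline case handled by such weights — derive the a priori energy estimate $\widetilde{\mathbb{E}}\sup_{t\le T'}\|\Psi_t\|_{L^2_{\text{wt}}}^2+\widetilde{\mathbb{E}}\int_0^{T'}\|\partial_x\Psi_t\|_{L^2_{\text{wt}}}^2dt\le C\|\psi\|_{L^2_{\text{wt}}}^2$ by Itô's formula (the $-a\,x\,\partial_x$ term produces, after integration by parts, a bounded zeroth-order contribution plus a term absorbed by the weight and the diffusion), and then obtain existence by a Galerkin or Picard approximation in that weighted space. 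Correspondingly one must check that each forward solution $\vartheta^{(i)}$ lies, $\widetilde{\mathbb{P}}$-a.s., in $C([0,T];L^2_{\text{wt}}(\mathbb{R}))$ so that the pairing is finite and Itô--Wentzell applies; this is where the square-integrable-density assumption on $\tilde\mu_0$ is indispensable, propagated forward by the parabolic smoothing of $\tfrac{\sigma^2+\sigma_0^2}{2}\partial_x^2$. Once these two weighted estimates are in hand, the cancellation in $\langle\eta_t,\Psi_t\rangle$ is routine and the conclusion follows as above.
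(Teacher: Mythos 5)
Your route is genuinely different from the paper's: the paper never introduces a dual problem, but instead follows \cite{Kotelenez95} and \cite{KurtzXiong99} by mollifying the measure flow with the heat kernel $T_\delta$, applying It\^{o}'s formula to $\langle T_\delta\vartheta_t,\psi\rangle_{L^2}^2$, summing over an orthonormal basis to get $\widetilde{\Ex}[\|T_\delta\vartheta_t\|_{L^2}^2]\leq\widetilde{\Ex}[\|T_\delta\tilde{\mu}_0\|_{L^2}^2]+C\int_0^t\widetilde{\Ex}[\|T_\delta(|\vartheta_s|)\|_{L^2}^2]\,ds$, deducing via Fatou that every probability-valued solution of \eqref{eq:linear-equation} has an $L^2$ density (this is exactly where the square-integrable density of $\tilde{\mu}_0$ is used), and then running the same estimate plus Gronwall on the difference of two solutions. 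That forward argument needs no backward equation, no parabolic regularization, and no nondegeneracy in $\sigma$.

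As written, your proposal has a genuine gap precisely at its crux, the dual equation. The equation you pose is not the adjoint of \eqref{eq:linear-equation}: computing $d\langle\eta_t,\Psi_t\rangle$ by It\^{o}--Wentzell with $d\Psi_t=-\mathcal{A}^{\nu_t,\tilde{\theta}_t}\Psi_t\,dt+(-\sigma_0\partial_x\Psi_t+\Upsilon_t)\,d\widetilde{W}^0_t$ leaves the uncancelled drift $\sigma_0\langle\eta_t,\partial_x\Upsilon_t\rangle-\sigma_0^2\langle\eta_t,\partial_x^2\Psi_t\rangle$; exact cancellation requires the genuinely coupled BSPDE $-d\Psi_t=\bigl(\mathcal{A}^{\nu_t,\tilde{\theta}_t}\Psi_t+\sigma_0\partial_x\Upsilon_t\bigr)dt-\Upsilon_t\,d\widetilde{W}^0_t$ with the pair $(\Psi,\Upsilon)$ as unknown, whereas forcing $\Upsilon=\sigma_0\partial_x\Psi$ to kill the remainder turns the dual into a backward random PDE whose solution is not adapted (the coefficients contain $\tilde{\theta}$), so the stochastic pairing is no longer justified. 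Moreover, the well-posedness of that coupled BSPDE cannot be quoted from standard linear SPDE theory in this setting: super-parabolicity amounts to $\sigma^2$ being uniformly positive, but $\sigma$ is a coordinate of the type variable, only known to satisfy $0<\sigma\leq K$ under $\tilde{\mu}_0$ (so the operator is really a family indexed by $(a,u,\sigma,y)$, possibly degenerate in the limit), and the filtration is generated by $(\widetilde{W}^0,\tilde{\theta})$, for which martingale representation by $d\widetilde{W}^0$-integrals --- implicitly used when you write the martingale part as $\Upsilon_t\,d\widetilde{W}^0_t$ --- is not available. Finally, the claim that each forward solution inherits an $L^2$ density by ``parabolic smoothing'' is asserted rather than proved, yet it is needed to make your duality pairing legitimate; the paper obtains exactly this fact by the mollification estimate and Fatou. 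Replacing the dual step by that direct mollified energy estimate removes all of these obstructions.
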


With the aid of Lemma \ref{lem:linear-wellpo}, we can prove the desired result based on a conditional Mckean-Vlasov equation instead of a particle system proposed in \cite{KurtzXiong99}.
We have the next result on the uniqueness of solution to the stochastic FPK \eqref{eq:SFPK} under some additional assumptions.
\begin{proposition}\label{prop:conv-3}
Let assumptions {\Asa} and {\Ath} hold. If $\tilde{\mu}_0$ has a square-integrable density w.r.t. Lebesgue measure, then the stochastic FPK equation \eqref{eq:SFPK} has a unique solution.
\end{proposition}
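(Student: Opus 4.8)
The plan is to establish uniqueness of the stochastic FPK equation \eqref{eq:SFPK} by a two-stage argument: first reduce the nonlinear equation to the linear SPDE \eqref{eq:linear-equation} via a fixed-point/conditioning scheme, and then invoke the uniqueness of the linear equation from Lemma~\ref{lem:linear-wellpo}. Suppose $\tilde\mu$ and $\tilde\mu'$ are two solutions of \eqref{eq:SFPK} on $(\widetilde\Omega,\widetilde{\mathcal F},\widetilde\Px)$ driven by the same $(\tilde\mu_0,\tilde\theta,\widetilde W^0)$. The key observation is that if we freeze the mean-field argument of the generator $\mathcal A^{\tilde\mu_s,\tilde\theta_s}$ by plugging in a fixed $\nu\in S$, the resulting equation \eqref{eq:linear-equation} is linear and, by Lemma~\ref{lem:linear-wellpo}, has at most one solution whenever $\tilde\mu_0$ has a square-integrable density. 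So it suffices to show that any solution $\tilde\mu$ of the nonlinear equation is the unique solution of the linear equation with $\nu=\tilde\mu$, i.e., to rule out the possibility that two different $\tilde\mu,\tilde\mu'$ produce different frozen coefficients but both solve their respective linear equations.

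The cleanest route, following the conditional McKean–Vlasov philosophy alluded to in the text, is to exhibit a canonical solution built from a conditional McKean–Vlasov SDE and then show every FPK solution coincides with it. Concretely, on a possibly enlarged space carry an idiosyncratic Brownian motion $B$ independent of $\widetilde W^0$ and a type/target variable $(\xi,Y)$, and consider
\begin{equation*}
d\bar X_t = g^{\bar\mu_t,\tilde\theta_t}_{a,u}(\bar X_t)\,dt + \sigma\,dB_t + \sigma_0\,d\widetilde W^0_t,\qquad \bar\mu_t = \mathrm{Law}\big((\xi,Y,\bar X_t)\,\big|\,\mathcal F^{\widetilde W^0,\tilde\theta}_t\big).
\end{equation*}
Existence of such a $(\bar X,\bar\mu)$ follows from a standard Picard iteration in $S$: the drift $g$ is affine in $x$ and Lipschitz in the measure argument through the first moment $\int z\,m(d\xi,dy,dz)$, uniformly since $a\le K$ by \Asa, so a contraction estimate in $d_S$ on a short interval, iterated over $[0,T]$ using the moment bounds, yields a unique $\bar\mu$; its marginal flow satisfies \eqref{eq:SFPK} by Itô's formula applied to $\phi(\bar X_t)$ and taking conditional expectation given $\mathcal F^{\widetilde W^0,\tilde\theta}_t$. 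Conversely, given any solution $\tilde\mu$ of \eqref{eq:SFPK}, treat $\nu:=\tilde\mu$ as frozen: then $\tilde\mu$ solves the \emph{linear} equation \eqref{eq:linear-equation} with that $\nu$; but so does the conditional law flow of the solution of the linear SDE with drift coefficient frozen at $\nu=\tilde\mu$, and Lemma~\ref{lem:linear-wellpo} forces these to agree, which in turn identifies $\tilde\mu$ with the fixed point $\bar\mu$ — hence any two solutions coincide.

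The main obstacle is the measurability/conditioning bookkeeping needed to make the conditional McKean–Vlasov construction rigorous in the presence of the common noise $\widetilde W^0$ and the exogenous control $\tilde\theta$: one must argue that $\bar\mu_t$ is $\mathcal F^{\widetilde W^0,\tilde\theta}_t$-measurable as an $\mathcal P_2(E)$-valued random variable, that the conditional-expectation version of Itô's formula applies (using that $B$ is independent of $(\widetilde W^0,\tilde\theta)$), and that the contraction is carried out $\omega$-by-$\omega$ in the conditioning variable in a jointly measurable way. A secondary technical point is verifying the hypothesis of Lemma~\ref{lem:linear-wellpo} is preserved: one needs that the solution flow $\tilde\mu_t$ retains a square-integrable density for $t>0$, or more precisely that the difference of two solutions, which satisfies a linear equation with zero initial condition, is forced to vanish — this follows from the duality/$L^2$-energy estimate underlying Lemma~\ref{lem:linear-wellpo} since the initial datum of the difference is $0$ and hence trivially has a square-integrable density, so the regularity hypothesis is only needed on $\tilde\mu_0$ itself, exactly as assumed. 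Once these measurable-selection and conditional-Itô steps are in place, the uniqueness for \eqref{eq:SFPK} follows by combining the fixed-point identification with Lemma~\ref{lem:linear-wellpo}, and consequently $\{\Qx^N\}_{N\ge1}$ has a unique weak limit in $\mathcal P_2(\hat S)$.
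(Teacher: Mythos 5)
Your proposal follows essentially the same route as the paper: it builds the conditional McKean--Vlasov SDE driven by an independent idiosyncratic Brownian motion alongside the common noise $\widetilde{W}^0$ and control $\tilde\theta$, establishes its well-posedness by a contraction argument, and then identifies any solution of \eqref{eq:SFPK} with the conditional law flow of the linear SDE whose measure argument is frozen, using the uniqueness of the linear SPDE \eqref{eq:linear-equation} from Lemma~\ref{lem:linear-wellpo}, so that uniqueness of the fixed point yields uniqueness for \eqref{eq:SFPK}. The only differences are cosmetic: the paper runs the contraction on the state process in a discounted norm (controlling the conditional laws via Kantorovich--Rubinstein duality) and conditions on $\widetilde{\mathcal{G}}_t=\sigma(\tilde\zeta)\vee\sigma(\widetilde{W}^0_s,\tilde\theta_s;\,s\le t)$, whereas you contract on the measure flow in $d_S$ and condition on $(\widetilde{W}^0,\tilde\theta)$ alone, neither of which changes the substance of the argument.
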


\begin{proof}
Let $\tilde{\zeta}=(\tilde{a},\tilde{u},\tilde{\sigma},\widetilde{Y},X_0)$ be an $E$-valued r.v. such that $\widetilde{\Px}\circ\tilde{\zeta}^{-1}=\tilde{\mu}_0$. We first introduce the filtration $\widetilde{\mathcal{G}}_t:=\sigma(\tilde{\zeta})\vee\sigma(\widetilde{W}^0_s,\tilde{\theta}_s;~s\leq t)$ for $t\geq0$ and a Brownian motion $\widetilde{W}=(\widetilde{W}_t)_{t\in[0,T]}$ under $(\widetilde{\Omega},\widetilde{\mathcal{F}},\widetilde{\Px})$ with the natural filtration $\widetilde{\F}_t^W:=\sigma(\widetilde{W}_s;~s\leq t)$, which is assumed to be independent of $\widetilde{\zeta}$, $\widetilde{W}^0$ and $\tilde{\theta}$. We then define the following enlarged filtration given by
\begin{align}\label{eq:filtration}
\widetilde{\F}_t:=\widetilde{\mathcal{G}}_t\vee\widetilde{\F}_t^W=\sigma(\tilde{\zeta})\vee\sigma(\widetilde{W}_s,\widetilde{W}^0_s,\tilde{\theta}_s;~s\leq t),\quad t\geq0.
\end{align}
Then, the filtration $\widetilde{\mathbb{G}}=\{\widetilde{\mathcal{G}}_t\}_{t\in[0,T]}$ is immersed in the filtration $\widetilde{\mathbb{F}}=\{\widetilde{\mathcal{F}}_t\}_{t\in[0,T]}$, i.e., every bounded $\widetilde{\mathbb{G}}$-martingale is an $\widetilde{\mathbb{F}}$-martingale (by applying Theorem 3.2 in \cite{AksamitJeanblanc17}). We next consider the conditional Mckean-Vlasov equation that
\begin{equation}\label{eq:conditional-MV}
\left\{\begin{aligned}
dX_t&= \tilde{a}\left(\int_E x \,\nu_t(d\xi,dy,dx)-X_t\right)dt+\tilde{u}\tilde{\theta}_tdt+\tilde{\sigma} d\widetilde{W}_t+\sigma_0 d\widetilde{W}_t^0,\\
\nu_t&=\widetilde{\Px}_t\circ(\tilde{a},\tilde{u},\tilde{\sigma},\widetilde{Y},X_t)^{-1}.
\end{aligned}\right.
\end{equation}
Here, $\widetilde{\Px}_t\circ(\tilde{a},\tilde{u},\tilde{\sigma},\widetilde{Y},X_t)^{-1}$ denotes the law of $(\tilde{a},\tilde{u},\tilde{\sigma},\widetilde{Y},X_t)$ under $\widetilde{\Px}$ given $\widetilde{\mathcal{G}}_t$. In what follows, we will apply the contraction mapping theorem to show that \eqref{eq:conditional-MV} has a unique solution.

We use $\widetilde{\Ex}$ as the notation of the expectation under probability $\widetilde{\Px}$. Moreover, let us define, for $r>0$,
\begin{equation}\label{eq:Hc-space}
\mathbb{H}_r:=\left\{X:~\widetilde{\mathbb{F}}\textrm{-adapted process},~\|X\|_r:=\widetilde{\Ex}\left[\int_0^T e^{-r t}|X_t|dt\right]<\infty\right\}.
\end{equation}
It is straightforward to check that $(\mathbb{H}_r,\|\cdot\|_r)$ is a Banach space. For any $X^{(i)}\in\mathbb{H}_r$, $i=1,2$, we define the operator
\begin{equation}\label{eq:ContractionMap}
\mathcal{Z}_t(X^{(i)}):=X_0+\tilde{a}\int_0^t\left(\int_E x \,\nu_s^{(i)}(d\xi,dy,dx)-X_s^{(i)}\right)ds+\tilde{u}\int_0^t\tilde{\theta}_sds+\tilde{\sigma} \widetilde{W}_t+\sigma_0 \widetilde{W}_t^0,
\end{equation}
where $\nu_t^{(i)}=\widetilde{\Px}_t\circ(\tilde{a},\tilde{u},\tilde{\sigma},\widetilde{Y},X_t^{(i)})^{-1}$. Note that
\begin{align*}
\widetilde{\Ex}[|\mathcal{Z}_t(X^{(1)})-\mathcal{Z}_t(X^{(2)})|]
&\leq K\int_0^t \widetilde{\Ex}[|X^{(1)}_s-X^{(2)}_s|]+\widetilde{\Ex}\left[\int_E x(\nu^{(1)}_s-\nu^{(2)}_s)(d\xi,dy,dx)\right]ds\\
&\leq K\int_0^t \left\{\widetilde{\Ex}[|X^{(1)}_s-X^{(2)}_s|]+\widetilde{\Ex}\left[\mathcal{W}_{\R,1} (\widetilde{\Px}_s\circ(X_s^{(1)})^{-1}, \widetilde{\Px}_s\circ(X_s^{(2)})^{-1})\right]\right\}ds,
\end{align*}
where $K$ is the constant in the assumption {\Asa}. The second inequality is a direct consequence of the Kantorovich-Rubinstein duality formula (see, e.g. Theorem 5.10 in \cite{Villani09}). Moreover, by the Kantorovich duality, we have that
\begin{align*}
&\mathcal{W}_{\R,1}(\widetilde{\Px}_s\circ(X_s^{(1)})^{-1},\widetilde{\Px}_s\circ(X_s^{(2)})^{-1})\nonumber\\
&\qquad=\sup_{\phi\in\Lip_1(\R)}\left\{\int_\R\phi(x)\widetilde{\Px}_s\circ(X_s^{(1)})^{-1}(dx) -\int_\R\phi(y)\widetilde{\Px}_s\circ(X_s^{(2)})^{-1}(dy)\right\}\\
&\qquad=\sup_{\phi\in\Lip_1(\R)}\left\{\int_{\R\times\R}(\phi(x)-\phi(y))\widetilde{\Px}_s\circ(X_s^{(1)},X_s^{(2)})^{-1}(dx,dy) \right\}\\
&\qquad\leq \int_{\R\times\R}|x-y|\widetilde{\Px}_s\circ(X_s^{(1)},X_s^{(2)})^{-1}(dx,dy) \\
&\qquad=\widetilde{\Ex}[|X^{(1)}_s-X^{(2)}_s|\mid\widetilde{\mathcal{G}}_s],
\end{align*}
where $\Lip_1(\R)$ is the set of Lipschitz continuous functions on $\R$ with Lipschitz constants being less than one. Therefore, we have that
\begin{align*}
\widetilde{\Ex}[|\mathcal{Z}_t(X^{(1)})-\mathcal{Z}_t(X^{(2)})|]
\leq 2K\int_0^t \widetilde{\Ex}[|X^{(1)}_s-X^{(2)}_s|]ds.
\end{align*}
It follows from changing the order of integration that
\begin{align*}
\|\mathcal{Z}(X^{(1)})- \mathcal{Z}(X^{(2)})\|_r
&=\widetilde{\Ex}\left[\int_0^T e^{-r t}|\mathcal{Z}_t(X^{(1)})-\mathcal{Z}_t(X^{(2)})|dt\right]\\
&\leq 2K\widetilde{\Ex}\left[\int_0^T e^{-r t}\left(\int_0^t|X^{(1)}_s-X^{(2)}_s|ds\right)dt\right]\\
&\leq \frac{2K}{r}\|X^{(1)}-X^{(2)}\|_r.
\end{align*}
Thus, we choose $r>2K$, the operator in \eqref{eq:ContractionMap} is a strict contraction mapping and \eqref{eq:conditional-MV} has a unique solution.

Note that $\widetilde{\mathbb{G}}$ is immersed in $\widetilde{\mathbb{F}}$. It can be easily deduced from It\^{o}'s formula that if $(X,\nu)$ is the solution to \eqref{eq:conditional-MV}, then $\nu$ is a solution to \eqref{eq:SFPK}, i.e., for any $\phi\in C^2(\R)$ and $t\in [0,T]$, $\nu_t$ satisfies
\begin{align*}
\langle \nu_t,\phi \rangle=\langle \tilde{\mu}_0,\phi \rangle+\int_0^t \langle \nu_s,\mathcal{A}^{\nu_s,\tilde{\theta}_s}\phi \rangle ds+\sigma_0\int_0^t \langle \nu_s,\phi' \rangle d\widetilde{W}^0_s.
\end{align*}
Let $\nu^{(1)},\nu^{(2)}$ be two solutions of \eqref{eq:SFPK}, which are both $\widetilde{\mathbb{G}}$-adapted. Consider the following linear SDE with random coefficients
\begin{equation}\label{eq:linearSDE}
dX^{(1)}_t= \tilde{a}\left(\int_E x \,\nu^{(1)}_t(d\xi,dy,dx)-X^{(1)}_t\right)dt+\tilde{u}\tilde{\theta}_tdt+\tilde{\sigma} d\widetilde{W}_t+\sigma_0 d\widetilde{W}_t^0,\quad X^{(1)}_0=X_0,
\end{equation}
where $(\tilde{a},\tilde{u},\tilde{\sigma},\widetilde{Y},X_0)$ and $(\widetilde{W},\widetilde{W}^0)$ are defined the same as in \eqref{eq:conditional-MV}. Some standard arguments yield that \eqref{eq:linearSDE} has a unique $\widetilde{\mathbb{F}}$-adapted strong solution. Again, by It\^{o}'s formula and the immersion property, it holds that $\vartheta^{(1)}_t=\widetilde{\Px}_t\circ(\tilde{a},\tilde{u},\tilde{\sigma},\widetilde{Y},X^{(1)}_t)^{-1}$ satisfies that
\begin{align*}
\langle \vartheta^{(1)}_t,\phi \rangle=\langle \tilde{\mu}_0,\phi \rangle+\int_0^t \langle \vartheta^{(1)}_s,\mathcal{A}^{\nu^{(1)}_s,\tilde{\theta}_s}\phi \rangle ds+\sigma_0\int_0^t \langle \vartheta^{(1)}_s,\phi' \rangle d\widetilde{W}^0_s.
\end{align*}
However, it is assumed that $\nu^{(1)}$ is a solution to \eqref{eq:SFPK}, i.e.,
\begin{align*}
\langle \nu^{(1)}_t,\phi \rangle=\langle \tilde{\mu}_0,\phi \rangle+\int_0^t \langle \nu^{(1)}_s,\mathcal{A}^{\nu^{(1)}_s,\tilde{\theta}_s}\phi \rangle ds+\sigma_0\int_0^t \langle \nu^{(1)}_s,\phi' \rangle d\widetilde{W}^0_s.
\end{align*}
In view of Lemma \ref{lem:linear-wellpo}, we obtain that $\nu^{(1)}=\vartheta^{(1)}$. That is, $(X^{(1)},\nu^{(1)})$ is a solution to the conditional Mckean-Vlasov equation \eqref{eq:conditional-MV}. Similarly, $(X^{(2)},\nu^{(2)})$ is also a solution to \eqref{eq:conditional-MV}. The established uniqueness of the solution to \eqref{eq:conditional-MV} then yields the desired result that $\nu^{(1)}=\nu^{(2)}$.
\end{proof}


Proposition \ref{prop:conv-3} guarantees that the stochastic FPK equation \eqref{eq:SFPK} has a unique solution under some mild conditions. As a result, the relatively compact sequence $\{\Qx^N\}_{N\geq 1}$ in Theorem \ref{thm:convergence-empirical} has a unique weak limit under the same conditions.
\begin{corollary}\label{coro:conv-3.5}
Under assumptions in Proposition \ref{prop:conv-3}, the relatively compact sequence $\{\Qx^N\}_{N\geq 1}$ has a unique weak limit.
\end{corollary}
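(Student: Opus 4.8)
The plan is to combine the relative compactness of $\{\Qx^N\}_{N\ge1}$ established in Theorem~\ref{thm:convergence-empirical} with the uniqueness of the stochastic FPK equation from Proposition~\ref{prop:conv-3}, using the elementary fact that a relatively compact sequence in a metric space converges as soon as all of its subsequential limits coincide. Thus the task reduces to showing that any two subsequential weak limits $\Qx$ and $\Qx'$ of $\{\Qx^N\}_{N\ge1}$ in $\mathcal{P}_2(\hat{S})$ are equal.

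First I would pin down the marginal of the limits on the first two coordinates: since the projection $\hat{S}=\mathcal{P}_2(E)\times\mathcal{L}_T^2\times S\to\mathcal{P}_2(E)\times\mathcal{L}_T^2$ is continuous, the hypothesis \eqref{eq:convercondThm4-1} forces this marginal to equal $\nu_0$ for both $\Qx$ and $\Qx'$, i.e. the limiting pair $(\tilde{\mu}_0,\tilde{\theta})$ of initial empirical measure and control has law $\nu_0$ under either limit. Next I would realize $\Qx$ (and likewise $\Qx'$) through the Skorokhod construction underlying Lemma~\ref{lem:conv-2}, keeping the common noise inside the tuple: on some $(\widetilde{\Omega},\widetilde{\mathcal{F}},\widetilde{\Px})$ one obtains a triple $(\tilde{\mu}_0,\tilde{\theta},\tilde{\mu})$ with law $\Qx$ and a Brownian motion $\widetilde{W}^0$ such that $\tilde{\mu}$ solves the stochastic FPK equation \eqref{eq:SFPK} driven by $\widetilde{W}^0$. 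By the proof of Proposition~\ref{prop:conv-3}, this solution is pathwise unique given the data $(\tilde{\mu}_0,\tilde{\theta},\widetilde{W}^0)$, and is moreover the explicit conditional-law flow $\tilde{\mu}_t=\widetilde{\Px}_t\circ(\tilde{a},\tilde{u},\tilde{\sigma},\widetilde{Y},X_t)^{-1}$ of the unique solution $X$ of the conditional McKean--Vlasov equation \eqref{eq:conditional-MV}; in particular $\tilde{\mu}$ is the image of $(\tilde{\mu}_0,\tilde{\theta},\widetilde{W}^0)$ under a fixed measurable map, independent of the subsequence chosen. A Yamada--Watanabe--type argument (pathwise uniqueness together with weak existence) then upgrades this to uniqueness in law, so that $\Qx$ is completely determined by the joint law of $(\tilde{\mu}_0,\tilde{\theta},\widetilde{W}^0)$. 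Finally I would check that this joint law is itself common to all subsequential limits: it is the limit of the laws of $(\mu_0^N,\widehat{\theta}^N,\widehat{W}^{N,0})$ along the chosen subsequence, the common-noise marginal being Wiener for every $N$, and — when $\sigma_0>0$ — the process $\widetilde{W}^0$ is recovered from $(\tilde{\mu}_0,\tilde{\theta},\tilde{\mu})$ through the martingale term of \eqref{eq:SFPK} by a fixed measurable map (the degenerate case $\sigma_0=0$ making \eqref{eq:SFPK} deterministic and the conclusion immediate), which together with the uniqueness statement closes the loop and yields $\Qx=\Qx'$. With the subsequential limit unique, relative compactness of $\{\Qx^N\}_{N\ge1}$ in $\mathcal{P}_2(\hat{S})$ then gives $\mathcal{W}_{\hat{S},2}(\Qx^N,\Qx)\to0$, which is the asserted unique weak limit.

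I expect the main obstacle to be precisely the middle step: converting the filtration-dependent ``unique solution'' of Proposition~\ref{prop:conv-3} — a statement living on a fixed filtered probability space with prescribed data $(\tilde{\mu}_0,\tilde{\theta},\widetilde{W}^0)$ — into uniqueness of the \emph{law} $\Qx$ on $\hat{S}$. This forces one to carry the common Brownian motion $\widetilde{W}^0$ (rather than only the tuple $(\mu_0^N,\widehat{\theta}^N,\mu^N)$) through the tightness and Skorokhod arguments of Lemma~\ref{lem:conv-1} and Lemma~\ref{lem:conv-2}, and to verify that the joint law of $\widetilde{W}^0$ with $(\tilde{\mu}_0,\tilde{\theta})$ does not depend on the subsequence, so that the deterministic solution map produces one and the same $\Qx$. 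The reconstruction of $\widetilde{W}^0$ from the limiting empirical flow via \eqref{eq:SFPK}, combined with the at-most-one-solution property of the linear SPDE in Lemma~\ref{lem:linear-wellpo}, is what makes this coupling argument go through.
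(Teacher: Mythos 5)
Your high-level route is the same as the paper's, which in fact offers no detailed proof beyond the one-sentence observation that relative compactness (Theorem \ref{thm:convergence-empirical}) together with uniqueness for \eqref{eq:SFPK} (Proposition \ref{prop:conv-3}) yields a unique limit; and you correctly locate the crux, namely that Proposition \ref{prop:conv-3} is a uniqueness statement \emph{given the data} $(\tilde{\mu}_0,\tilde{\theta},\widetilde{W}^0)$ on a fixed filtered space, whereas the corollary concerns the law of the triple $(\tilde{\mu}_0,\tilde{\theta},\tilde{\mu})$. However, your final step does not close this gap. Recovering $\widetilde{W}^0$ from $(\tilde{\mu}_0,\tilde{\theta},\tilde{\mu})$ through the martingale part of \eqref{eq:SFPK} only shows that the law of the quadruple is a functional of the law of the triple; what you need is the opposite direction: that the \emph{joint} law of the data $(\tilde{\mu}_0,\tilde{\theta},\widetilde{W}^0)$ is the same along every convergent subsequence, so that pushing it forward through the pathwise-unique solution map produces one and the same $\Qx$. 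Condition \eqref{eq:convercondThm4-1} pins down only the law of the pair $(\tilde{\mu}_0,\tilde{\theta})$ and the Wiener marginal of the common noise, not their coupling, and ``the common-noise marginal is Wiener for every $N$'' cannot substitute for this. For instance, with $\Theta=[-1,1]$, $\widehat{\theta}^N_t=\Pi_\Theta(\widehat{W}^{N,0}_t)$ for even $N$ and $\widehat{\theta}^N_t=\Pi_\Theta(-\widehat{W}^{N,0}_t)$ for odd $N$ (initial data and types chosen i.i.d.\ with an absolutely continuous limit law so that the hypotheses of Proposition \ref{prop:conv-3} hold), the laws $Q_N\circ(\mu_0^N,\widehat{\theta}^N)^{-1}$ coincide for all $N$, yet along the two subsequences $\int_0^t\tilde{\theta}_s\,ds$ and $\sigma_0\widetilde{W}^0_t$ are positively, respectively negatively, correlated, so the law of the mean $\int_E x\,\tilde{\mu}_t(de)$, and hence of $\tilde{\mu}$, differs. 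Thus the step you describe as ``closing the loop'' is circular and cannot be completed from \eqref{eq:convercondThm4-1} alone.

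The missing ingredient is precisely what is available in every place the paper invokes the corollary: in Lemma \ref{lem:MargConv-GivenQ} all $\hat{\mu}^N$ are driven by one and the same coordinate process, and in Proposition \ref{prop:gamma-conv} one has $\mathcal{W}_{\Omega_\infty,2}(Q_N,Q)\to0$; in both situations $Q_N\circ(\mu_0^N,\widehat{\theta}^N,\widehat{W}^{N,0})^{-1}$ converges to a single limit. If you add this joint convergence of the data (rather than only \eqref{eq:convercondThm4-1}), your scheme goes through and matches the intended argument: carry $\widehat{W}^{N,0}$ through the tightness and Skorokhod steps of Lemmas \ref{lem:conv-1}--\ref{lem:conv-2}; check compatibility of the limit, i.e.\ that $\widetilde{W}^0$ remains a Brownian motion with respect to the filtration generated by $(\tilde{\mu}_0,\tilde{\theta},\widetilde{W}^0,\tilde{\mu})$, so that the $\widetilde{\mathbb{G}}$-adaptedness assumed in the uniqueness proof of Proposition \ref{prop:conv-3} is legitimate; use the identification of any solution of \eqref{eq:SFPK} with the conditional-law flow of \eqref{eq:conditional-MV} to write $\tilde{\mu}$ as a fixed measurable map of $(\tilde{\mu}_0,\tilde{\theta},\widetilde{W}^0)$; and conclude that all subsequential limits are pushforwards of one and the same data law, hence equal. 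In that corrected form the reconstruction of $\widetilde{W}^0$ from the triple becomes unnecessary, and your remark about $\sigma_0=0$ is fine, since then the equation is deterministic conditionally on $(\tilde{\mu}_0,\tilde{\theta})$ and $\nu_0$ alone determines the limit.
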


\subsection{Convergence of cost functionals}

We next proceed to show the convergence of objective functionals for a fixed $Q\in\mathcal{Q}(\nu)$. That is, we prove the convergence of $J_N^R(Q)$ defined in \eqref{eq:objectfuncRex} as $N\to\infty$.

Let $\widehat{\cal X}=(\widehat{\bm\zeta},(\widehat{W}^0,\widehat{\mathbf{W}}),\widehat{\theta})$ be the corresponding coordinate process to the weak control $Q$, and for all $i=1,2,\ldots$, $\widehat{X}^{\theta,i}=(\widehat{X}^{\theta,i}_t)_{t\in[0,T]}$ be the solution of \eqref{eq:systemmodel} driven by $\widehat{\cal X}$. Define a new empirical process by
\begin{equation}\label{eq:empirical-infty}
  \hat{\mu}^N_t:=\frac{1}{N}\sum_{i=1}^N\delta_{(\xi^i,\widehat{Y}^i,\widehat{X}^{\theta,i}_t)},\quad t\in[0,T].
\end{equation}
We can view $\hat{\mu}^N=(\hat{\mu}^N_t)_{t\in[0,T]}$ as the counterpart of $\mu^N$ in \eqref{eq:empirical-finite} that is driven by $(\widehat{\bm\zeta}^N,(\widehat{W}^{N,0},\widehat{\mathbf{W}}^N),\widehat{\theta}^N)$. The law of $\hat{\mu}^N$ under the weak control $Q$ is defined as:
\begin{align}\label{eq:hatQmu}
\widehat{\Qx}_\mu^N:=Q\circ(\hat{\mu}^N)^{-1}.
\end{align}
Recall that $L(x,y)=|x-y|^2$ is the quadratic cost function, the objective functional $J_N^R(Q)$ defined in \eqref{eq:objectfuncRex} can be reformulated by
\begin{equation}\label{eq:objectfuncFinal}
\begin{aligned}
J_N^R(Q)&=\alpha\Ex^Q[\langle \hat{\mu}^N_T,L \rangle] +\beta\Ex^Q\left[\int_0^T\langle\hat{\mu}^N_t,L\rangle dt\right]+\lambda\Ex^Q\left[\int_0^T|\widehat{\theta}_t|^2dt\right]\\
&=\alpha\int_S\langle\rho_T,L\rangle\widehat{\Qx}_\mu^N(d\rho) +\beta\int_0^T\left(\int_S\langle\rho_t,L\rangle\widehat{\Qx}_\mu^N(d\rho)\right)dt +\lambda\Ex^Q\left[\int_0^T|\widehat{\theta}_t|^2dt\right].
\end{aligned}
\end{equation}
To characterize the limiting objective function, we first impose the following assumption given by
\begin{itemize}
\item[{\Asb}] For any ${\bm\gamma}=(x^i,y^i)_{i\geq 1}\in\Xi^{\mathbb{N}}$, define $I_N:{\bm\gamma}\mapsto\frac{1}{N}\sum_{i=1}^N \delta_{(\xi^i,y^i,x^i)}\in\mathcal{P}_2(E)$ for $N\geq1$. Then, there exists a measurable mapping $I_{*}: \Xi^{\mathbb{N}}\to \mathcal{P}_{2}(E)$ such that
    \begin{equation}\label{eq:Istar}
    \nu\left(\left\{{\bm\gamma}\in\Xi^{\mathbb{N}}:~\lim_{N\to\infty} \mathcal{W}_{E,2}(I_N({\bm\gamma}),I_*({\bm\gamma}))=0\right\}\right)=1.
    \end{equation}
\end{itemize}
One simple example that the assumption {\Asb} holds is as follows. Let the sequence of initial money reserve and target levels  $(\zeta^i)_{i\geq1}:=(X_0^i,Y^i)_{i\geq1}$ be i.i.d.. Assume that the type vector $\xi^i$ given in \eqref{eq:parameters} converges to a deterministic $\xi\in\mathcal{O}$ as $i\to\infty$. Then, for $I_*:=\delta_{\xi}\otimes(\Px\circ(\zeta^1)^{-1})$, the LLN and the assumption {\Asa} yield that  $\mathbb{P}(\omega\in\Omega:\lim_{N_\to\infty}\mathcal{W}_{E,2}(I_N(\xi^i,\zeta^i(\omega)),I_*)=0)=1$. As a result, $\nu(\{{\bm\gamma}\in\Xi^{\mathbb{N}}:\lim_{N\to\infty} \mathcal{W}_{E,2}(I_N({\bm\gamma}),I_*)=0\})=\mathbb{P}(\omega\in\Omega:  \lim_{N_\to\infty}\mathcal{W}_{E,2}(I_N(\xi^i,\zeta^i(\omega)),I_*)=0)=1$.

It is not difficult to see that the assumption {\Asb} implies the validity of the condition \eqref{eq:convercondThm4-1} in Theorem~\ref{thm:convergence-empirical}. In fact, for any $Q\in\mathcal{Q}(\nu)$, it follows from \eqref{eq:Istar} that $Q\circ(\hat{\mu}^N_0,\widehat{\theta})^{-1}=Q\circ(I_N(\widehat{\bm\zeta}),\widehat{\theta})^{-1}\Rightarrow Q\circ (I_*(\widehat{\bm\zeta}),\widehat{\theta})^{-1}$, as $N\to\infty$. It is straightforward to conclude the next result.
\begin{lemma}\label{lem:MargConv-GivenQ}
Let assumptions {\Asa}, {\Asb} and {\Ath} hold. If $I_*(\widehat{\bm\zeta})$ has a square integrable density (under $Q$) w.r.t. Lebesgue measure, then the relatively compact sequence $\{\widehat{\Qx}_\mu^N\}_{N\geq 1}\subset\mathcal{P}_2(S)$ has a unique limit $\widehat{\Qx}_\mu\in\mathcal{P}_2(S)$ satisfying $\lim_{N\to\infty}\mathcal{W}_{S,2}(\widehat{\Qx}_\mu^N,\widehat{\Qx}_\mu)=0$.
\end{lemma}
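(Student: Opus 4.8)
The plan is to deduce the statement from the machinery already built for the empirical measures, namely Lemma~\ref{lem:conv-1}, Theorem~\ref{thm:convergence-empirical}, Proposition~\ref{prop:conv-3} and Corollary~\ref{coro:conv-3.5}, by specializing the sequence of weak controls $(Q_N)_{N\geq1}$ appearing in those results to the \emph{constant} sequence $Q_N\equiv Q$. With this choice the coordinate process attached to $Q_N$ is exactly $\widehat{\cal X}=(\widehat{\bm\zeta},(\widehat{W}^0,\widehat{\mathbf{W}}),\widehat{\theta})$, the associated $N$-particle state vector coincides with $(\widehat{X}^{\theta,1},\ldots,\widehat{X}^{\theta,N})$, and hence $\mu^N=\hat{\mu}^N$, $\mu_0^N=\hat{\mu}_0^N=I_N(\widehat{\bm\zeta})$, $\widehat{\theta}^N=\widehat{\theta}$ and $\Qx_\mu^N=\widehat{\Qx}_\mu^N$. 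In particular the relative compactness of $\{\widehat{\Qx}_\mu^N\}_{N\geq1}$ in $\mathcal{P}_2(S)$ is precisely the content of Lemma~\ref{lem:conv-1} applied to $Q_N\equiv Q$, so only the uniqueness of the limit and the upgrade to $\mathcal{W}_{S,2}$-convergence remain to be shown.

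The next step is to verify the convergence hypothesis~\eqref{eq:convercondThm4-1} required by Theorem~\ref{thm:convergence-empirical}. Since $Q\circ\widehat{\bm\zeta}^{-1}=\nu$, assumption {\Asb} gives $\mathcal{W}_{E,2}(I_N(\widehat{\bm\zeta}),I_*(\widehat{\bm\zeta}))\to0$ $Q$-a.s., whence $Q\circ(\hat{\mu}_0^N,\widehat{\theta})^{-1}=Q\circ(I_N(\widehat{\bm\zeta}),\widehat{\theta})^{-1}\Rightarrow Q\circ(I_*(\widehat{\bm\zeta}),\widehat{\theta})^{-1}=:\nu_0$, as already noted before the statement. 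Theorem~\ref{thm:convergence-empirical} then yields that $\Qx^N:=Q\circ(\hat{\mu}_0^N,\widehat{\theta},\hat{\mu}^N)^{-1}$ is relatively compact in $\mathcal{P}_2(\hat{S})$, and that the third marginal $\tilde{\mu}$ of any subsequential limit $(\tilde{\mu}_0,\tilde{\theta},\tilde{\mu})$ solves the stochastic FPK equation~\eqref{eq:SFPK} driven by a Brownian motion $\widetilde{W}^0$, with $(\tilde{\mu}_0,\tilde{\theta})$ distributed as $(I_*(\widehat{\bm\zeta}),\widehat{\theta})$ under $Q$.

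The key point is to transfer the density hypothesis to the limit: because $\tilde{\mu}_0$ has the same law as $I_*(\widehat{\bm\zeta})$, and the latter carries a square-integrable density with respect to Lebesgue measure $Q$-a.s. by assumption, the limiting initial measure $\tilde{\mu}_0$ inherits this property almost surely. Hence the hypotheses of Proposition~\ref{prop:conv-3} are met, so \eqref{eq:SFPK} has a unique solution, and Corollary~\ref{coro:conv-3.5} gives that $\{\Qx^N\}_{N\geq1}$ has a unique weak limit $\Qx\in\mathcal{P}(\hat{S})$; moreover the uniform-integrability argument at the end of the proof of Theorem~\ref{thm:convergence-empirical} upgrades this weak convergence to $\mathcal{W}_{\hat{S},2}(\Qx^N,\Qx)\to0$.

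Finally I would push forward through the projection $\mathrm{proj}_S:\hat{S}\ni(\nu,\theta,\rho)\mapsto\rho\in S$, which by the form of the metric $d_{\hat S}$ in \eqref{eq:metric-hatS} is non-expansive; consequently $\mathrm{proj}_S$ maps $\mathcal{P}_2(\hat S)$ into $\mathcal{P}_2(S)$ and is non-expansive for the Wasserstein distances, so $\widehat{\Qx}_\mu^N=\Qx^N\circ(\mathrm{proj}_S)^{-1}\to\widehat{\Qx}_\mu:=\Qx\circ(\mathrm{proj}_S)^{-1}$ in $\mathcal{W}_{S,2}$, with $\widehat{\Qx}_\mu\in\mathcal{P}_2(S)$ by Lemma~\ref{lem:conv-1}. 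Uniqueness of the limit among all subsequential limits is automatic: any $\mathcal{W}_{S,2}$-convergent subsequence of $\{\widehat{\Qx}_\mu^N\}$ is the image under $\mathrm{proj}_S$ of a $\mathcal{W}_{\hat S,2}$-convergent subsequence of the relatively compact family $\{\Qx^N\}$, whose limit must be $\Qx$, hence its projection must be $\widehat{\Qx}_\mu$; combined with the relative compactness of $\{\widehat{\Qx}_\mu^N\}$ in $\mathcal{P}_2(S)$ this forces convergence of the whole sequence. I do not anticipate a serious obstacle here: the only delicate points are the bookkeeping that the specialization $Q_N\equiv Q$ makes the empirical objects literally coincide, and the verification that ``$\tilde{\mu}_0$ has a square-integrable density'' is inherited in law from $I_*(\widehat{\bm\zeta})$, together with the standard fact that relative compactness plus a unique subsequential limit yields convergence of the full sequence.
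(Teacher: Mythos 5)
Your proposal is correct and follows essentially the route the paper intends: the paper itself verifies condition \eqref{eq:convercondThm4-1} for the constant choice $Q_N\equiv Q$ via {\Asb} in the paragraph preceding the lemma, and then declares the conclusion ``straightforward'' from Lemma \ref{lem:conv-1}, Theorem \ref{thm:convergence-empirical}, Proposition \ref{prop:conv-3} and Corollary \ref{coro:conv-3.5}, exactly the chain you spell out (including the transfer of the square-integrable-density hypothesis in law and the non-expansive projection onto the $S$-marginal). Your write-up simply makes explicit the bookkeeping the paper leaves implicit, so no changes are needed.
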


Furthermore, for a given $Q\in\mathcal{Q}(\nu)$ and the unique limit $\widehat{\Qx}_\mu\in\mathcal{P}_2(S)$, let us define
\begin{equation}\label{eq:objectfuncFinalLimit}
  J^R(Q):=\alpha\int_S\langle\rho_T,L\rangle\widehat{\Qx}_\mu(d\rho) +\beta\int_0^T\left(\int_S\langle\rho_t,L\rangle\widehat{\Qx}_\mu(d\rho)\right)dt +\lambda\Ex^Q\left[\int_0^T|\widehat{\theta}_t|^2dt\right].
\end{equation}
Thanks to \eqref{eq:x-bound}, it can be deduced that $\sup_{N\geq 1} J_N^R(Q)<\infty$ for any $Q\in\mathcal{Q}(\nu)$. The next theorem is the main result of this section.

\begin{theorem}\label{thm:ObjecFuncConv}
Let assumptions in Lemma \ref{lem:MargConv-GivenQ} hold with $\nu\in{\cal P}(\Xi^{\mathbb{N}})$ satisfying $\sup_{i\in\N}\int_{\Xi^{\N}}(|x_i|^{2+\varrho}+|y_i|^2)\nu(d({\bf x},{\bf y}))<\infty$. Then, we have that
\begin{equation}\label{eq:ObjecFuncConv}
  \lim_{N\to\infty}J_N^R(Q)=J^R(Q),\quad \forall~Q\in\mathcal{Q}(\nu),
\end{equation}
where $J_N^R(Q)$ and $J^R(Q)$ are defined in \eqref{eq:objectfuncFinal} and \eqref{eq:objectfuncFinalLimit} respectively.
\end{theorem}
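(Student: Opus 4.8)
The plan is to observe first that the control cost term $\lambda\Ex^Q\big[\int_0^T|\widehat\theta_t|^2dt\big]$ is identical in $J_N^R(Q)$ and $J^R(Q)$, so it cancels, and it remains to show that the terminal and running cost terms converge, namely
\[
\int_S\langle\rho_T,L\rangle\,\widehat{\Qx}_\mu^N(d\rho)\longrightarrow\int_S\langle\rho_T,L\rangle\,\widehat{\Qx}_\mu(d\rho),\qquad \int_0^T\!\!\!\int_S\langle\rho_t,L\rangle\,\widehat{\Qx}_\mu^N(d\rho)\,dt\longrightarrow\int_0^T\!\!\!\int_S\langle\rho_t,L\rangle\,\widehat{\Qx}_\mu(d\rho)\,dt
\]
as $N\to\infty$. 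The only structural ingredient I would invoke is Lemma~\ref{lem:MargConv-GivenQ}, which under the standing hypotheses yields $\mathcal{W}_{S,2}(\widehat{\Qx}_\mu^N,\widehat{\Qx}_\mu)\to0$; the remainder is a continuity-and-growth argument that rests on the quadratic form of $L(x,y)=|x-y|^2$.

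Writing a generic point of $E=\mathcal{O}\times\R\times\R$ as $e=(a,u,\sigma,y,x)$, I would fix the reference path $\rho_\star\in S$ to be the constant path $t\mapsto\delta_0$ and use $L(x,y)\le2(|x|^2+|y|^2)\le2|e|^2$ to get, for all $\rho\in S$ and $t\in[0,T]$,
\[
0\le\langle\rho_t,L\rangle\le2\!\int_E|e|^2\,\rho_t(de)=2\,\mathcal{W}_{E,2}^2(\rho_t,\delta_0)\le2\,d_S^2(\rho,\rho_\star).
\]
Thus each functional $F_t\colon\rho\mapsto\langle\rho_t,L\rangle$ on $(S,d_S)$ has at most quadratic growth, and since $d_S(\rho^n,\rho)\to0$ forces $\mathcal{W}_{E,2}(\rho^n_t,\rho_t)\to0$, Theorem~7.12 of \cite{Villani03} (applied to the continuous quadratic-growth function $L$ on $E$) gives $F_t(\rho^n)\to F_t(\rho)$, so $F_t\in C(S)$. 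Then, applying Theorem~7.12 of \cite{Villani03} a second time, now on the space $S$, the convergence $\mathcal{W}_{S,2}(\widehat{\Qx}_\mu^N,\widehat{\Qx}_\mu)\to0$ would give $\int_S F_t\,d\widehat{\Qx}_\mu^N\to\int_S F_t\,d\widehat{\Qx}_\mu$ for every $t\in[0,T]$; the choice $t=T$ delivers the first convergence.

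For the running term I would use Fubini to write it as $\int_0^T\psi_N(t)\,dt$ with $\psi_N(t):=\int_S F_t\,d\widehat{\Qx}_\mu^N=\Ex^Q[\langle\hat{\mu}_t^N,L\rangle]=\frac1N\sum_{i=1}^N\Ex^Q\big[|\widehat X_t^{\theta,i}-\widehat Y^i|^2\big]$. The pointwise limit $\psi_N(t)\to\psi_\infty(t):=\int_S F_t\,d\widehat{\Qx}_\mu$ for each $t$ is exactly what was just established, while the bound $\psi_N(t)\le\frac2N\sum_{i=1}^N\big(\Ex^Q[\sup_{s\in[0,T]}|\widehat X_s^{\theta,i}|^2]+\Ex^Q[|\widehat Y^i|^2]\big)$ is uniform in $N$ and $t$ thanks to the moment estimate \eqref{eq:x-bound} in Lemma~\ref{lem:estimates} and the assumption $\sup_i\int_{\Xi^{\N}}(|x_i|^{2+\varrho}+|y_i|^2)\,\nu(d(\mathbf x,\mathbf y))<\infty$. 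Dominated convergence on $[0,T]$ then gives $\int_0^T\psi_N(t)\,dt\to\int_0^T\psi_\infty(t)\,dt$, i.e. the second convergence, and adding the three contributions yields \eqref{eq:ObjecFuncConv}.

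The main obstacle, such as it is, lies in the two uses of Theorem~7.12 of \cite{Villani03}: because $L$ grows genuinely quadratically (and is therefore unbounded), plain weak convergence of $\widehat{\Qx}_\mu^N$ would not let me pass to the limit inside $\langle\rho_t,L\rangle$, so I must exploit the stronger content of $\mathcal{W}_{S,2}$-convergence — the convergence of second moments and the resulting uniform integrability — which is precisely what Lemma~\ref{lem:MargConv-GivenQ} supplies. Verifying the $C(S)$-continuity of the functionals $F_t$ is the other spot where the unboundedness of $L$ has to be handled with the same tool. Once these points are settled, the rest of the argument is routine.
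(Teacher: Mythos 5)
Your proposal is correct and follows essentially the same route as the paper's proof: the same quadratic-growth bound $\langle\rho_t,L\rangle\le 2d_S^2(\rho,\rho_\star)$, the same two applications of Theorem 7.12 of \cite{Villani03} (continuity of $\rho\mapsto\langle\rho_t,L\rangle$ on $(S,d_S)$ and then passage to the limit under $\mathcal{W}_{S,2}$-convergence from Lemma \ref{lem:MargConv-GivenQ}), and the same moment estimates to handle the time integral. The only cosmetic difference is that you justify the exchange of limit and $\int_0^T\cdot\,dt$ by a uniform-in-$(N,t)$ bound plus dominated convergence, whereas the paper bounds $\int_0^T|h_N(t)|^2dt$ uniformly in $N$ and invokes uniform integrability; both rest on Lemma \ref{lem:estimates} and the moment condition on $\nu$.
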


\begin{proof}
Recall that $S=C([0,T];\mathcal{P}_2(E))$, where $E$ consist of elements $e=(a,u,\sigma,y,x)\in\cO\times\Xi$. Similar to the proof of Lemma \ref{lem:conv-1}, denoting by $\delta_{0}\in S$ the constant process $\hat{\rho}_t\equiv\delta_0$, we have that
\[\langle\rho_t,L\rangle=\int_E L(x,y)\rho_t(de)\leq2\int_E|e|^2\rho_t(de) \leq2\sup_{t\in[0,T]}\mathcal{W}_{E,2}^2(\rho_t,\hat{\rho}_t)=2d_S^2(\rho,\hat{\rho}),\]
where $d_S$ is defined by \eqref{eq:metric-S}. That is, $\langle\rho_t,L\rangle$ has a quadratic growth as a function on $(S,d_S)$. We then claim that $\langle\rho_t,L\rangle$ is also continuous on $(S,d_S)$. Indeed, if $\{\rho^m\}_{m\geq1}$ and $\rho^\infty$ belong to $S$ and $d_S(\rho^m,\rho^\infty)\to0$ as $m\to\infty$, i.e., $\sup_{t\in[0,T]}\mathcal{W}_{E,2}(\rho_t^m,\rho_t^\infty)\to0$, as $m\to\infty$, then for the continuous function $L$ satisfying the quadratic growth condition, it holds from Theorem 7.12 of \cite{Villani03} that $\langle\rho_t^m,L\rangle\to\langle\rho_t^\infty,L\rangle$ as $m\to\infty$. Moreover, by Lemma \ref{lem:MargConv-GivenQ}, we have $\lim_{N\to\infty}\mathcal{W}_{S,2}(\widehat{\Qx}^N_\mu,\widehat{\Qx}_\mu)=0$. This yields from Theorem 7.12 of \cite{Villani03} again that
\begin{equation}\label{eq:hNConv}
  h_N(t):=\int_S\langle\rho_t,L\rangle\widehat{\Qx}_\mu^N(d\rho) \to\int_S\langle\rho_t,L\rangle\widehat{\Qx}_\mu(d\rho),\quad \forall~t\in[0,T],\quad N\to\infty.
\end{equation}
On the other hand, Lemma~\ref{lem:estimates} and Jensen's inequality imply the existence of a constant $C>0$ only depending on $T$ and $K$ that
\begin{align*}
  \sup_{N\geq1}\int_0^T|h_N(t)|^2dt&=\sup_{N\geq1}\int_0^T|\Ex^Q[\langle\hat{\mu}^N_t,L\rangle]|^2dt \leq\sup_{N\geq 1}\Ex^Q\left[\int_0^T|\langle\hat{\mu}^N_t,L\rangle|^2dt\right]\\
  &\leq\sup_{N\geq1}\frac{T}{N}\sum_{i=1}^N\Ex^Q\left[\sup_{t\in[0,T]}\left| \widehat{X}^{\widehat{\theta},i}_t-\widehat{Y}^i\right|^2\right]\leq C<\infty.
\end{align*}
It follows that $\{h_N(t)\}_{N\geq1}$ is uniformly integrable, and thus
\[\lim_{N\to\infty}\int_0^T h_N(t)dt= \lim_{N\to\infty}\int_0^T\left(\int_S\langle\rho_t,L\rangle\widehat{\Qx}_\mu^N(d\rho)\right)dt =\int_0^T\left(\int_S\langle\rho_t,L\rangle\widehat{\Qx}_\mu(d\rho)\right)dt.\]
From the result above and \eqref{eq:hNConv}, the desired convergence result follows.
\end{proof}

\section{Convergence of minimizers}\label{sec:gamma-conv}

Finally, this section is devoted to some convergence results of the optimal value functionals and the associated minimizers in the system with finite banks to the mean field model when $N\rightarrow\infty$. We first metrize the space $\mathcal{Q}(\nu)\subset\mathcal{P}_2(\Omega_\infty)$ by taking the quadratic Wasserstein distance $\mathcal{W}_{\Omega_\infty,2}$ on $\mathcal{Q}(\nu)$. The main result of the paper on the convergence of minimizers is stated as below.
\begin{theorem}\label{thm:MainThm}
Let assumptions {\Asa}, {\Asb} and {\Ath} hold. If $I_*({\bm\gamma})$ has a square-integrable density w.r.t. Lebesgue measure,  $\nu(d{\bm\gamma})$-a.e., then for any $\nu\in{\cal P}(\Xi^{\mathbb{N}})$ satisfying $\sup_{i\in\N}\int_{\Xi^{\N}}(|x_i|^{2+\varrho}+|y_i|^2)\nu(d({\bf x},{\bf y}))<\infty$, it holds that
\begin{equation}\label{eq:MinConv}
  \inf_{Q\in\mathcal{Q}(\nu)}J_N^R(Q)\to\inf_{Q\in\mathcal{Q}(\nu)}J^R(Q),\quad N\to\infty,
\end{equation}
where the infimum of $J^R(Q)$ can be attained by some $Q\in\mathcal{Q}(\nu)$. Furthermore, the minimizing sequence $\{Q_N\}_{N=1}^\infty\subset\mathcal{Q}(\nu)$ (up to a subsequence) converges to some $Q^*\in\mathcal{Q}(\nu)$ in $\mathcal{W}_{\Omega_\infty,2}$, then $Q^*$ is a minimizer of $J^R(Q)$ over $Q\in{\cal Q}(\nu)$.
\end{theorem}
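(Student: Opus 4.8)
The plan is to prove the statement as a $\Gamma$-convergence result for the family $\{J_N^R\}$ on the metric space $(\mathcal{Q}(\nu),\mathcal{W}_{\Omega_\infty,2})$. Recall the general principle (see, e.g., \cite{Braides14}): if (a) for every $Q\in\mathcal{Q}(\nu)$ there is a recovery sequence $Q_N\to Q$ with $\limsup_N J_N^R(Q_N)\le J^R(Q)$, (b) for every $Q$ and every $Q_N\to Q$ one has $\liminf_N J_N^R(Q_N)\ge J^R(Q)$, and (c) the family $\{J_N^R\}$ is equi-coercive, then $\inf_{\mathcal{Q}(\nu)}J_N^R\to\min_{\mathcal{Q}(\nu)}J^R$, the infimum of $J^R$ is attained, and every convergent minimizing sequence has as limit a minimizer of $J^R$. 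A preliminary observation governs the whole argument: since $\mathcal{Q}(\nu)$ carries the Wasserstein distance built on the \emph{strong} $\mathcal{L}_T^2$-norm of the control coordinate, bounded families of controls are not relatively compact, so I would first run (b)--(c) with the control coordinate equipped with the \emph{weak} $\mathcal{L}_T^2$-topology and only upgrade to strong convergence at the very end.

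For (a), the constant sequence $Q_N\equiv Q$ works: Theorem~\ref{thm:ObjecFuncConv} gives $J_N^R(Q)\to J^R(Q)$ directly, its hypotheses being in force because $I_*(\widehat{\bm\zeta})$ has a square-integrable density $Q$-a.s. by assumption; this already yields $\inf_{\mathcal{Q}(\nu)}J^R<\infty$ and $\limsup_N\inf_{\mathcal{Q}(\nu)}J_N^R\le\inf_{\mathcal{Q}(\nu)}J^R$. For (c), note that under \Ath\ every admissible control is $\Theta$-valued with $\Theta$ compact, so the ``integrated controls'' $t\mapsto\int_0^t\widehat\theta_s\,ds$ are uniformly Lipschitz and their laws are tight on $\mathcal{C}_T$ by Arzelà--Ascoli; since the laws of $(\widehat{\bm\zeta},\widehat W^0,\widehat{\mathbf W})$ coincide ($\nu$, resp.\ Wiener measure) for all $Q\in\mathcal{Q}(\nu)$, any sequence in a sublevel set of $\{J_N^R\}$ has a subsequence whose coordinate processes converge in law with the controls converging weakly in $\mathcal{L}_T^2$. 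One then checks, using convexity of $\Theta$ and the fact that the filtration in Definition~\ref{def:relax-sol} is the natural one of the full coordinate process, that the limiting law still lies in $\mathcal{Q}(\nu)$.

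The core is (b). Fix $Q_N\to Q$ with controls converging weakly in $\mathcal{L}_T^2$; by \Asb\ we also have $\mu_0^N=I_N(\widehat{\bm\zeta})\to I_*(\widehat{\bm\zeta})$, so condition \eqref{eq:convercondThm4-1} holds along the sequence. Writing $J_N^R(Q_N)$ through the empirical processes as in \eqref{eq:objectfuncFinal}, I would invoke (the proof of) Theorem~\ref{thm:convergence-empirical}: the laws $\{Q_N\circ(\mu_0^N,\widehat\theta^N,\mu^N)^{-1}\}$ are relatively compact in $\mathcal{P}_2(\hat S)$, and --- because the control enters the dynamics only through the weakly continuous functional $\theta\mapsto\int_0^\cdot u\,\theta_s\,ds$, which is exactly the point where the $L^2$-weak argument survives --- every limit point is carried by solutions of the stochastic FPK equation \eqref{eq:SFPK} driven by the limiting control and common noise. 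Since those limiting data have the same joint law as the corresponding coordinates under $Q$, Proposition~\ref{prop:conv-3} (whose density hypothesis is assumed) forces this limit to coincide with the measure $\widehat{\Qx}_\mu$ appearing in $J^R(Q)$. As $\rho\mapsto\langle\rho_t,L\rangle$ is continuous with quadratic growth on $S$ and Lemma~\ref{lem:estimates} provides uniform integrability, the $\mu$-dependent terms of $J_N^R(Q_N)$ converge to those of $J^R(Q)$ exactly as in the proof of Theorem~\ref{thm:ObjecFuncConv}, while $\liminf_N\lambda\Ex^{Q_N}[\int_0^T|\widehat\theta^N_t|^2dt]\ge\lambda\Ex^{Q}[\int_0^T|\widehat\theta_t|^2dt]$ by weak lower semicontinuity of the $\mathcal{L}_T^2$-norm; hence $\liminf_N J_N^R(Q_N)\ge J^R(Q)$.

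With (a)--(c) the abstract theory gives $\inf_{\mathcal{Q}(\nu)}J_N^R\to\min_{\mathcal{Q}(\nu)}J^R$ and exhibits a minimizer $Q^*$ of $J^R$ as a subsequential limit of any minimizing sequence $\{Q_N\}$ (compactness from (c), the inequality $J^R(Q^*)\le\liminf_N J_N^R(Q_N)=\inf J^R$ from (b)). To obtain the convergence in the stated metric $\mathcal{W}_{\Omega_\infty,2}$, I would note that along a minimizing sequence the $\mu$-dependent costs \emph{converge} (not merely are lower semicontinuous), so $J_N^R(Q_N)=\inf J_N^R\to\inf J^R=J^R(Q^*)$ forces $\lambda\Ex^{Q_N}[\int_0^T|\widehat\theta^N_t|^2dt]\to\lambda\Ex^{Q^*}[\int_0^T|\widehat\theta_t|^2dt]$; convergence of the $\mathcal{L}_T^2$-norms together with weak convergence of the controls and the Radon--Riesz property of $L^2$ upgrades the latter to strong $\mathcal{L}_T^2$-convergence, which with the (fixed, already convergent) other coordinates gives $\mathcal{W}_{\Omega_\infty,2}(Q_N,Q^*)\to0$. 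The main obstacle is exactly this control variable: equi-coercivity fails in the strong $\mathcal{L}_T^2$-topology, so one must prove the $\Gamma$-$\liminf$ inequality under merely weak convergence of controls (possible only because the dynamics are affine in $\theta$, making the empirical-measure flow, equivalently the FPK solution, weakly continuous in the control), and then recover strong convergence of the \emph{optimal} controls from the strict convexity encoded in $\lambda\int_0^T|\theta_t|^2dt$; checking that the admissibility/compatibility conditions defining $\mathcal{Q}(\nu)$ are stable under these weak limits is the other delicate ingredient.
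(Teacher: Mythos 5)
Your proposal is workable in outline, but it follows a genuinely different route from the paper, and it is worth spelling out the trade-off. The paper never proves a $\Gamma$-liminf inequality under weak convergence of the controls: Proposition~\ref{prop:gamma-conv} gives continuous convergence $J_N^R(Q_N)\to J^R(Q)$ only along sequences converging in the strong metric $\mathcal{W}_{\Omega_\infty,2}$ (whose control coordinate carries the $\mathcal{L}_T^2$-norm), and the compactness of the minimizing sequence in this strong metric is obtained by choosing \emph{special} minimizers: Proposition~\ref{prop:optimal-existence} and Lemma~\ref{lem:Ekeland} represent near-optimal controls through the adjoint BSDE as $\theta^k_t=\Pi_\Theta\bigl(-\tfrac{1}{2\lambda}\sum_i u_ip^{k,i}_t-\tfrac{1}{2\lambda k}\chi^k_t\bigr)$, and the estimate \eqref{eq:pq-bound} then yields the time-equicontinuity needed for tightness of the control laws in the norm topology of $\mathcal{L}_T^2$; Theorem~2.1 of \cite{Braides14} concludes. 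You instead take compactness for free in the weak $\mathcal{L}_T^2$ topology, prove the liminf there using the affine dependence of the dynamics on $\theta$ together with weak lower semicontinuity of the quadratic cost, and upgrade to $\mathcal{W}_{\Omega_\infty,2}$ convergence of the minimizers a posteriori by a Radon--Riesz argument. This buys you independence from the Ekeland/adjoint-BSDE machinery and applies to arbitrary minimizing sequences, not only the ones constructed in Proposition~\ref{prop:optimal-existence}; but it requires work that the paper's lemmas do not supply as stated: Lemma~\ref{lem:conv-2} and hence Theorem~\ref{thm:convergence-empirical} are proved with the metric \eqref{eq:metric-hatS}, whose control component is the $\mathcal{L}_T^2$ norm, so the passage to the limit in the stochastic FPK equation must be redone for controls converging only weakly (your observation that $\theta$ enters through the weak--strong pairing $\theta_s\langle\mu_s,u\phi'\rangle$ is the correct fix); closedness of $\mathcal{Q}(\nu)$ under such limits, in particular the Wiener property of $(\widehat{W}^0,\widehat{\mathbf{W}})$ with respect to the filtration generated also by the control in Definition~\ref{def:relax-sol}, must be checked; and the Radon--Riesz upgrade has to be carried out at the level of laws (Skorokhod representation on the weakly compact ball of $\mathcal{L}_T^2$, a Fatou argument converting convergence of $\Ex^{Q_N}[\|\widehat{\theta}^N\|^2_{\mathcal{L}_T^2}]$ into a.s.\ convergence of norms along a further subsequence, then uniform integrability to recover $\mathcal{W}_{\Omega_\infty,2}$). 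None of these steps appears to fail, since the affine-in-$\theta$ dynamics, the convex compact $\Theta$ and the uniqueness result of Proposition~\ref{prop:conv-3} are exactly what they need, but they are the substantive content your sketch defers; the paper's detour through Ekeland's variational principle exists precisely to avoid having to prove them.
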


To prove the above theorem, the key tool is the Gamma-convergence from $J_N^R$ to $J^R$ on $(\mathcal{Q}(\nu),\mathcal{W}_{\Omega_\infty,2})$ (see, e.g. \cite{Braides14}). We give its definition as below.

\begin{definition}
  $J^R:\mathcal{Q}(\nu)\to\R$ is the $\Gamma$-limit of $J_N^R:\mathcal{Q}(\nu)\to\R$, denoted by $J^R=\Gamma\textrm{-}\lim_{N\to\infty}J_N^R$, if the following two conditions hold:
  \begin{description}
    \item[(i) (liminf inequality):] for any $Q\in\mathcal{Q}(\nu)$ and any sequence $\{Q_N\}_{N=1}^\infty$ converging to $Q$ in $(\mathcal{Q}(\nu),\mathcal{W}_{\Omega_\infty,2})$, i.e. $\mathcal{W}_{\Omega_\infty,2}(Q_N,Q)\to0$ as $N\to\infty$, we have
        \[J^R(Q)\leq\liminf_{N\to\infty}J_N^R(Q_N);\]
    \item[(ii) (limsup inequality):] for any $Q\in\mathcal{Q}(\nu)$, there exists a sequence of $(\bar{Q}_N)_{N=1}^\infty$ converging to $Q$ in $(\mathcal{Q}(\nu),\mathcal{W}_{\Omega_\infty,2})$ such that
        \[J^R(Q)\geq\limsup_{N\to\infty}J_N^R(\bar{Q}_N).\]
  \end{description}
\end{definition}
The next result implies the Gamma-convergence, which serves as an important step before we prove Theorem \ref{thm:MainThm}.
\begin{proposition}\label{prop:gamma-conv}
Let assumptions in Theorem~\ref{thm:MainThm} hold. 
For any $\{Q_N\}_{N\geq1},Q\subset{\cal Q}(\nu)$ satisfying $\lim_{N\to\infty}\mathcal{W}_{\Omega_\infty,2}(Q_N,Q)=0$, let $(\widehat{\bm\zeta}^N,(\widehat{W}^{N,0},\widehat{\mathbf{W}}^N),\widehat{\theta}^N)$ (resp. $(\widehat{\bm\zeta},(\widehat{W}^{0},\widehat{\mathbf{W}}),\widehat{\theta})$) be the corresponding coordinate process to $Q_N$ (resp. $Q$). If $I_*(\widehat{\bm\zeta})$ has a square-integrable density (under $Q$) w.r.t. Lebesgue measure, then we have
\[\lim_{N\to\infty}J_N^R(Q_N)=J^R(Q).\]
\end{proposition}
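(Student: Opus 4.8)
The plan is to decompose the difference $J_N^R(Q_N)-J^R(Q)$ according to the three terms in the representations \eqref{eq:objectfuncFinal} and \eqref{eq:objectfuncFinalLimit}, namely the terminal cost $\alpha\langle\rho_T,L\rangle$, the running state cost $\beta\int_0^T\langle\rho_t,L\rangle dt$, and the control cost $\lambda\,\Ex^Q[\int_0^T|\widehat\theta_t|^2dt]$, and to handle each term via the convergence of the corresponding laws of empirical measures. The starting point is Theorem~\ref{thm:convergence-empirical}: since $\lim_{N\to\infty}\mathcal{W}_{\Omega_\infty,2}(Q_N,Q)=0$ implies in particular $Q_N\circ(\mu_0^N,\widehat\theta^N)^{-1}\Rightarrow Q\circ(I_*(\widehat{\bm\zeta}),\widehat\theta)^{-1}$ (using assumption~\Asb{} to identify the limit of $\mu_0^N=I_N(\widehat{\bm\zeta}^N)$), the family $\{\Qx^N\}_{N\geq1}=\{Q_N\circ(\mu_0^N,\widehat\theta^N,\mu^N)^{-1}\}$ is relatively compact in $\mathcal{P}_2(\hat S)$ and every limit point is supported on solutions of the stochastic FPK equation \eqref{eq:SFPK}. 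By Proposition~\ref{prop:conv-3}, since $I_*(\widehat{\bm\zeta})$ has a square-integrable density, that solution is unique, so by Corollary~\ref{coro:conv-3.5} the whole sequence $\{\Qx^N\}_{N\geq1}$ converges in $\mathcal{P}_2(\hat S)$ to a single limit law $\widetilde\Px\circ(\tilde\mu_0,\tilde\theta,\tilde\mu)^{-1}$.

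The next step is to upgrade this to convergence of the laws of the \emph{empirical trajectories} $\mu^N$ to the law $\widehat\Qx_\mu$ of the FPK solution, in $\mathcal{P}_2(S)$. For the terminal and running state costs this is exactly what is needed: since $\rho\mapsto\langle\rho_T,L\rangle$ and $\rho\mapsto\int_0^T\langle\rho_t,L\rangle dt$ are continuous on $(S,d_S)$ with quadratic growth — precisely the estimates established in the proof of Theorem~\ref{thm:ObjecFuncConv} — Theorem 7.12 of \cite{Villani03} (convergence in $\mathcal{W}_{S,2}$ implies convergence of integrals of continuous functions with quadratic growth, together with the uniform integrability furnished by the moment bound \eqref{eq:x-bound} in Lemma~\ref{lem:estimates}) gives
\[
\alpha\,\Ex^{Q_N}[\langle\mu^N_T,L\rangle]+\beta\,\Ex^{Q_N}\!\Big[\int_0^T\langle\mu^N_t,L\rangle dt\Big]\;\longrightarrow\;\alpha\!\int_S\langle\rho_T,L\rangle\widehat\Qx_\mu(d\rho)+\beta\!\int_0^T\!\Big(\int_S\langle\rho_t,L\rangle\widehat\Qx_\mu(d\rho)\Big)dt,
\]
matching the first two terms of $J^R(Q)$. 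Here one must check that the limit empirical-measure law $\widehat\Qx_\mu$ appearing through $Q_N$ agrees with the one defined via $Q$ in \eqref{eq:objectfuncFinalLimit}; this follows because both are characterized as the law of the unique FPK solution driven by the common noise with initial law $I_*(\widehat{\bm\zeta})$ and control marginal the law of $\widehat\theta$ — and the marginal $Q_N\circ\widehat\theta^{N,-1}\to Q\circ\widehat\theta^{-1}$ by the assumed $\mathcal{W}_{\Omega_\infty,2}$-convergence.

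For the control-cost term $\lambda\,\Ex^{Q_N}[\int_0^T|\widehat\theta^N_t|^2 dt]=\lambda\|\widehat\theta^N\|_{\mathbb{H}^2}^2$, the convergence $\mathcal{W}_{\Omega_\infty,2}(Q_N,Q)\to0$ controls exactly the second moment of the $\mathcal{L}^2_T$-coordinate $\widehat\theta$, so $\Ex^{Q_N}[\|\widehat\theta^N\|_{\mathcal{L}^2_T}^2]\to\Ex^{Q}[\|\widehat\theta\|_{\mathcal{L}^2_T}^2]$ directly from the definition of the Wasserstein-$2$ distance on $\mathcal{Q}(\nu)\subset\mathcal{P}_2(\Omega_\infty)$ (convergence in $\mathcal{W}_2$ is equivalent to weak convergence plus convergence of second moments, and $\kappa\mapsto\|\kappa\|_{\mathcal{L}^2_T}^2$ is continuous with "quadratic" growth in the relevant sense on the $\mathcal{L}^2_T$ factor, using $d_4$ from \eqref{eq:metricd1234}); compactness of $\Theta$ gives the required uniform integrability for free. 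Adding the three limits yields $\lim_{N\to\infty}J_N^R(Q_N)=J^R(Q)$.

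I expect the main obstacle to be the \emph{identification} step: showing that the limit law of the empirical processes $\mu^N$ built from $Q_N$ coincides with the object $\widehat\Qx_\mu$ entering $J^R(Q)$, which is defined from $Q$ via the auxiliary single limit of $\widehat\mu^N$ (cf. \eqref{eq:empirical-infty}, Lemma~\ref{lem:MargConv-GivenQ}). The subtlety is that $\mu^N$ is driven by the $Q_N$-coordinate process while $\widehat\mu^N$ is driven by the $Q$-coordinate process; the two are reconciled only through the \emph{uniqueness} of the stochastic FPK solution (Proposition~\ref{prop:conv-3}), which forces both relatively compact sequences to have the \emph{same} unique limit point, together with the fact that the FPK coefficients depend on $(Q_N,Q)$ only through the initial-measure marginal $I_*$ and the control marginal, both of which converge. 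Carefully tracking that the density hypothesis on $I_*(\widehat{\bm\zeta})$ is preserved in the limit — so that Proposition~\ref{prop:conv-3} applies to the limit and not merely along the sequence — is the delicate point; once that is in place the three convergences above are routine applications of Theorem 7.12 of \cite{Villani03} and the moment bounds of Lemma~\ref{lem:estimates}.
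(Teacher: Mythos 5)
Your overall architecture matches the paper's: verify the hypothesis \eqref{eq:convercondThm4-1} of Theorem~\ref{thm:convergence-empirical} with limit $Q\circ(I_*(\widehat{\bm\zeta}),\widehat{\theta})^{-1}$, invoke the FPK uniqueness (Proposition~\ref{prop:conv-3}, Corollary~\ref{coro:conv-3.5}) to get convergence of $Q_N\circ(\mu^N)^{-1}$ in $\mathcal{P}_2(S)$ to the same limit $\widehat{\Qx}_\mu$ that defines $J^R(Q)$, and then treat the three cost terms as in Theorem~\ref{thm:ObjecFuncConv}, with compactness of $\Theta$ handling the control term; those last steps are fine and are exactly what the paper does. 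The genuine gap is the very first step, which you dispose of with ``implies in particular \ldots using assumption \Asb{}'': the convergence $Q_N\circ(\mu_0^N,\widehat{\theta}^N)^{-1}\Rightarrow Q\circ(I_*(\widehat{\bm\zeta}),\widehat{\theta})^{-1}$ does not follow directly from $\mathcal{W}_{\Omega_\infty,2}(Q_N,Q)\to0$ and \Asb{}. The map $I_N$ varies with $N$ and \Asb{} gives only $\nu$-a.e.\ convergence $I_N({\bm\gamma})\to I_*({\bm\gamma})$, not continuity, so no continuous-mapping argument applies to the pair $(I_N(\widehat{\bm\zeta}^N),\widehat{\theta}^N)$ when only the joint law of $(\widehat{\bm\zeta}^N,\widehat{\theta}^N)$ converges weakly. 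This is precisely where the bulk of the paper's proof lies: it uses that the $\widehat{\bm\zeta}^N$-marginal is exactly $\nu$ for every $N$, so $Q_N\circ(\widehat{\bm\zeta}^N,I_N(\widehat{\bm\zeta}^N))^{-1}=\nu\circ\hat{I}_N^{-1}\Rightarrow\nu\circ\hat{I}_*^{-1}$, proves tightness of the triples $(\widehat{\bm\zeta}^N,I_N(\widehat{\bm\zeta}^N),\widehat{\theta}^N)$, and identifies the unique subsequential limit via the gluing lemma (any limit point has second coordinate equal to $I_*$ of the first, a.s., while its $({\bm\zeta},\theta)$-marginal is $Q\circ(\widehat{\bm\zeta},\widehat{\theta})^{-1}$). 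Only after this joint-law identification does \eqref{eq:convercondThm4-1} hold with the correct limit, and only then can Theorem~\ref{thm:convergence-empirical} and Corollary~\ref{coro:conv-3.5} be applied.

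Relatedly, your closing claim that the identification is resolved because the FPK data depend on $(Q_N,Q)$ ``only through the initial-measure marginal $I_*$ and the control marginal, both of which converge'' is not accurate: what is required is convergence of the \emph{joint} law $Q_N\circ(\mu_0^N,\widehat{\theta}^N)^{-1}$, not of the two marginals separately. Different couplings of the initial empirical measure with the control (and the common noise) would in general produce different laws for the FPK solution, so marginal convergence alone does not pin down the limit of $Q_N\circ(\mu^N)^{-1}$, nor identify it with the object $\widehat{\Qx}_\mu$ built from $Q$. The required joint convergence does hold in this setting, but only through the tightness-plus-gluing argument sketched above; supplying that argument is the missing content of your proof.
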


\begin{proof}
For any point ${\bm\gamma}=(x^i,y^i)_{i\geq 1}\in\Xi^{\mathbb{N}}$, let us define the mappings:
\begin{align*}
\hat{I}_N:{\bm\gamma}\mapsto ({\bm\gamma},I_N({\bm\gamma}))\in \Xi^{\mathbb{N}}\times\mathcal{P}_2(E),\quad \hat{I}_*:{\bm\gamma}\mapsto ({\bm\gamma},I_*({\bm\gamma}))\in \Xi^{\mathbb{N}}\times\mathcal{P}_2(E),
\end{align*}
where $I_N$ and $I_*$ are defined in the assumption {\Asb}. Note that by Definition \ref{def:relax-sol} on $\mathcal{Q}(\nu)$, we have $Q_N\circ \hat{I}_N(\widehat{\bm\zeta}^N)^{-1}=\{Q_N\circ (\widehat{\bm\zeta}^N)^{-1}\}\circ \hat{I}_N^{-1}=\nu\circ\hat{I}_N^{-1}$, and $Q\circ \hat{I}_*(\widehat{\bm\zeta})^{-1}=\nu\circ\hat{I}_*^{-1}$. Moreover, the assumption {\Asb} yileds that $\nu\circ\hat{I}_N^{-1}\Rightarrow\nu\circ\hat{I}_*^{-1}$ as $N\to\infty$. Therefore, it holds that
\begin{equation}\label{eq:QI-N}
  Q_N\circ \hat{I}_N(\widehat{\bm\zeta}^N)^{-1}=\nu\circ\hat{I}_N^{-1}~\Rightarrow~ \nu\circ\hat{I}_*^{-1}=Q\circ \hat{I}_*(\widehat{\bm\zeta})^{-1},\quad N\to\infty.
\end{equation}
As a  direct consequence of $\lim_{N\to\infty}\mathcal{W}_{\Omega_\infty,2}(Q_N,Q)=0$, we note that $Q_N\circ(\widehat{\bm\zeta}^N,\widehat{\theta}^N)^{-1}\Rightarrow Q\circ(\widehat{\bm\zeta},\widehat{\theta})^{-1}$ as $N\to\infty$. It follows that $\{Q_N\circ(\widehat{\bm\zeta}^N,I_N(\widehat{\bm\zeta}^N),\widehat{\theta}^N)^{-1}\}_{N\geq1}\subset \mathcal{P}(\Xi^{\mathbb{N}}\times\mathcal{P}_2(E)\times \mathcal{L}_T^2)$ is tight.

Next, we prove that $Q_N\circ(\widehat{\bm\zeta}^N,I_N(\widehat{\bm\zeta}^N),\widehat{\theta}^N)^{-1}$ converges weakly as $N\to\infty$. Without loss of generality, for $i=1,2$, assume the subsequence $Q_{N_k^i}\circ(\widehat{\bm\zeta}^{N_k^i},I_{N_k^i}(\widehat{\bm\zeta}^{N_k^i}),\widehat{\theta}^{N_k^i})^{-1}$ converges weakly to $\Px^i\circ({\bm\zeta}^i,J^i,\theta^i)^{-1}$ respectively, where $({\bm\zeta}^i,J^i,\theta^i)$ is $\Xi^{\mathbb{N}}\times\mathcal{P}_2(E)\times \mathcal{L}_T^2$-valued random variable defined on some probability space $(\Omega^i,\mathcal{F}^i,\Px^i)$ respectively. Note that $\Px^1\circ({\bm\zeta}^1,\theta^1)^{-1}=\Px^2\circ({\bm\zeta}^2,\theta^2)^{-1}$, then by the Gluing Lemma (see, e.g. Lemma 7.6 in \cite{Villani09}), there is a probability space $(\overline{\Omega},\overline{\mathcal{F}},\overline{\Px})$ on which $(\bar{\bm\zeta},\bar{J}_1,\bar{J}_2,\bar{\theta})$ is defined such that $\overline{\Px}\circ(\bar{\bm\zeta},\bar{J}_i,\bar{\theta})^{-1}=\Px^i\circ({\bm\zeta}^i,J^i,\theta^i)^{-1}$ for $i=1,2$. Moreover, it follows from \eqref{eq:QI-N} that $\Px^i\circ({\bm\zeta}^i,J^i)^{-1}=\nu\circ\hat{I}_*^{-1}$, and hence
\[\Px^{i}\left(\{\omega\in\Omega^{i}; J_{i}=I_*({\bm\zeta}^i(\omega))\}\right)=\nu\left(\left\{{\bm\gamma}\in\Xi^{\mathbb{N}}; \hat{I}_*({\bm\gamma})=({\bm\gamma},I_*({\bm\gamma}))\right\}\right)=1,\]
which implies that $\bar{J}_1=\bar{J}_2=I_*(\bar{\bm\zeta})$, $\overline{\Px}$-a.s. Therefore $(\bar{\bm\zeta},\bar{J}_1,\bar{\theta})=(\bar{\bm\zeta},\bar{J}_2,\bar{\theta})$, $\overline{\Px}$-a.s.. This yields that $\Px^1\circ({\bm\zeta}^1,J^1,\theta^1)^{-1}=\Px^2\circ({\bm\zeta}^2,J^2,\theta^2)^{-1}$. Thus, the limit is unique, i.e., $Q_N\circ(\widehat{\bm\zeta}^N,I_N(\widehat{\bm\zeta}^N),\widehat{\theta}^N)^{-1}\Rightarrow Q\circ(\widehat{\bm\zeta},I_*(\widehat{\bm\zeta}),\widehat{\theta})$ as $N\to\infty$.

Define the empirical process $\mu^N=(\mu_t^N)_{t\in[0,T]}\in S$ as in \eqref{eq:empirical-finite}, then we have $Q_N\circ(\mu_0^N,\widehat{\theta}^N)^{-1}\Rightarrow Q\circ(I_*(\widehat{\bm\zeta}),\widehat{\theta})^{-1}$ as $N\to\infty$. Since $I_*(\widehat{\bm\zeta})$ has a square-integrable density (under $Q$) w.r.t. Lebesgue measure, by Theorem~\ref{thm:convergence-empirical} and Corollary \ref{coro:conv-3.5}, $\Qx_\mu^N=Q_N\circ(\mu^N)^{-1}$ converges to $\Qx_\mu=Q\circ\mu^{-1}$ in $\mathcal{P}_2(S)$ as $N\to\infty$. Using similar arguments to prove Theorem \ref{thm:ObjecFuncConv}, it can be deduced that
\begin{align*}
  &\lim_{N\to\infty}\alpha\int_S\langle\rho_T,L\rangle\Qx_\mu^N(d\rho) +\beta\int_0^T\left(\int_S\langle\rho_t,L\rangle\Qx_\mu^N(d\rho)\right)dt\\
  &\qquad=\alpha\int_S\langle\rho_T,L\rangle\Qx_\mu(d\rho) +\beta\int_0^T\left(\int_S\langle\rho_t,L\rangle\Qx_\mu(d\rho)\right)dt.
\end{align*}
Moreover, it follows from $Q_N\circ(\widehat{\theta}^N)^{-1}\Rightarrow Q\circ\widehat{\theta}^{-1}$ as $N\to\infty$, and the compactness of $\Theta$ that 
\[\lim_{N\to\infty}\Ex^{Q_N}\left[\|\widehat{\theta}^N\|_{\mathcal{L}^2_T}^2\right] =\Ex^{Q}\left[\|\widehat{\theta}\|_{\mathcal{L}^2_T}^2\right].\]
Accordingly, we have
\[\lim_{N\to\infty}J_N^R(Q_N)=J^R(Q),\qquad \textrm{when}\quad \lim_{N\to\infty}\mathcal{W}_{\Omega_\infty,2}(Q_N,Q)=0.\]
It yields that $J^R$ is both a liminf bound and a limsup bound of $(J_N^R)_{N=1}^\infty$ and the desired Gamma-convergence holds.
\end{proof}

With the help of Proposition \ref{prop:gamma-conv}, we only need to show the convergence (along a subsequence) of the minimizers obtained in Proposition \ref{prop:ex-op-relaxed}. However, based on Proposition \ref{prop:ex-op-relaxed} for each $N$, it is difficult to check the relative compactness of the sequence of minimizers under weak formulation directly. Therefore, in what follows, we will provide an alternative way to construct an optimal control to problem \eqref{eq:objectfuncRex} in the weak sense using Ekeland's variational principle, which also relies on the equivalence result \eqref{equitwo} established in Proposition \ref{prop:ex-op-relaxed}. For notational convenience, we omit the dependence of coefficients on $N$ similar to subsection \ref{subsec:op-con}.

Recall that $\mathbf{X}^{\theta}_t$ is governed by the SDE \eqref{eq:finite-x} in a compact form.
Let us define the Hamiltonian $\mathcal{T}:\R^N\times\Theta\times\R^N\times\R^{N\times(N+1)}\to\R$ that
\[\mathcal{T}(\mathbf{x},\theta,\mathbf{p},\mathbf{q})=\mathbf{b}(\mathbf{x},\theta)^{\top}\mathbf{p} +\mathrm{tr}(\Sigma^\top \mathbf{q})+R_N(\mathbf{x},\mathbf{y};\theta),\]
where $\mathbf{b}(\mathbf{x},\theta)=A\mathbf{x}+\theta \mathbf{u}$, $A$ is defined by \eqref{eq:coeff-b} and $\Sigma$ is given in \eqref{eq:volSigma}. Then, for each $\theta\in\mathbb{U}^{\Px,\Fx}$, we have that $(\mathbf{p}_t^\theta,\mathbf{q}_t^\theta)\in \R^N\times\R^{N\times(N+1)}$ satisfies the adjoint BSDE that
\begin{align}\label{eq:adjoint-SDE}
d\mathbf{p}^\theta_t&=-\nabla_{\mathbf{x}} \mathcal{T}(\mathbf{X}_t^\theta,\theta_t,\mathbf{p}^\theta_t,\mathbf{q}^\theta_t)dt +\mathbf{q}^\theta_td\mathbf{W}^0_t=-\left[ A^\top\mathbf{p}^\theta_t+\frac{2\beta}{N}(\mathbf{X}_t^\theta-\mathbf{Y}) \right]dt+\mathbf{q}^\theta_td\mathbf{W}^0_t,\ \ t\in[0,T),\nonumber\\
\mathbf{p}_T^\theta&=\nabla_{\mathbf{x}}L_N(\mathbf{X}^\theta_T,\mathbf{Y}) =\frac{2\alpha}{N}(\mathbf{X}_T^\theta-\mathbf{Y}),
\end{align}
where $\mathbf{X}^\theta=(\mathbf{X}_t^\theta)_{t\in[0,T]}$ is the state process given by \eqref{eq:finite-x}. The standard result (see, e.g. Theorem 6.2.1 in \cite{Pham09}) shows that the BSDE \eqref{eq:adjoint-SDE} has a unique solution $(\mathbf{p}_t^\theta,\mathbf{q}_t^\theta)_{t\in[0,T]}$ and there exists a constant $D_3>0$ depending only on $K$ and $\Theta$  that
\begin{equation}\label{eq:pq-bound}
\sup_{\theta\in\mathbb{U}^{\Px,\Fx}} \Ex\left[\sup_{t\in[0,T]}|\mathbf{p}^\theta_t|^2+\int_0^T|\mathbf{q}^\theta_t|^2dt\right]\leq D_3.
\end{equation}

Next, we introduce the G\^{a}teaux derivative of the cost functional, which is a direct consequence of Corollary 4.11 of \cite{Carmona16}.

\begin{lemma}\label{lem:G-derivative}
Let assumptions {\Asa} and {\Ath} hold. Denote $\theta^{\varepsilon}:=\theta+\varepsilon(\hat{\theta}-\theta)$, $\forall~\theta,\hat{\theta}\in \mathbb{U}^{\Px,\Fx}$ and $\forall~\varepsilon\in[0,1]$. The G\^{a}teaux derivative of the cost functional is given by
\begin{equation*}
\left.\frac{d}{d\varepsilon}J_N(\theta^{\varepsilon})\right|_{\varepsilon=0}=\lim_{\varepsilon\to 0}\frac{1}{\varepsilon}(J_N(\theta^{\varepsilon})-J_N(\theta)) = \mathbb{E}\left[\int_{0}^{T}\eta^\theta_t\big(\hat{\theta}_t-\theta_t\big)dt\right],
\end{equation*}
where
\begin{equation}\label{eq:eta}
\eta^\theta_t := \frac{d}{d\theta}\mathcal{T}(\mathbf{X}_t^\theta,\theta_t,\mathbf{p}^\theta_t,\mathbf{q}^\theta_t)= 2\lambda\theta_t+\sum_{i=1}^{N}u_i p^{\theta,i}_t,\quad t\in[0,T],
\end{equation}
and $(\mathbf{p}^\theta,\mathbf{q}^\theta)=(\mathbf{p}^\theta_t,\mathbf{q}^\theta_t)_{t\in[0,T]}$ is the unique solution to the adjoint BSDE \eqref{eq:adjoint-SDE}.
\end{lemma}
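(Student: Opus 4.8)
The plan is to deduce the formula from Corollary 4.11 of \cite{Carmona16}, which expresses the G\^ateaux derivative of a controlled cost functional through the associated adjoint process, and then to read off the explicit shape of $\eta^\theta$ from the linear--quadratic structure. First I would check the hypotheses of that corollary in the present setting: by the assumption \Ath{} the policy space $\Theta$ is convex, so $\theta^{\varepsilon}=\theta+\varepsilon(\hat\theta-\theta)$ stays in $\mathbb{U}^{\Px,\Fx}$ for every $\varepsilon\in[0,1]$ and every $\theta,\hat\theta\in\mathbb{U}^{\Px,\Fx}$; the drift $\mathbf{b}(\mathbf{x},\theta)=A\mathbf{x}+\theta\mathbf{u}$ of \eqref{eq:finite-x} is affine in $(\mathbf{x},\theta)$ and the diffusion $\Sigma$ is constant, so the smoothness and linear-growth requirements hold trivially; and the cost coefficients in \eqref{eq:LNRN} are quadratic, with $\nabla_{\mathbf{x}}L_N(\mathbf{x},\mathbf{y})=\tfrac{2\alpha}{N}(\mathbf{x}-\mathbf{y})$, $\nabla_{\mathbf{x}}R_N(\mathbf{x},\mathbf{y};\theta)=\tfrac{2\beta}{N}(\mathbf{x}-\mathbf{y})$ and $\partial_\theta R_N(\mathbf{x},\mathbf{y};\theta)=2\lambda\theta$, while the integrability needed to differentiate under the expectation is supplied by the moment bounds of Lemma \ref{lem:estimates} and the a priori estimate \eqref{eq:pq-bound} for $(\mathbf{p}^\theta,\mathbf{q}^\theta)$.

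Granting this, the $L^2$-differentiability of $\varepsilon\mapsto\mathbf{X}^{\theta^{\varepsilon}}$ holds, and the variation process $\mathbf{Z}_t:=\partial_\varepsilon\mathbf{X}^{\theta^{\varepsilon}}_t\big|_{\varepsilon=0}$ solves the linear equation $d\mathbf{Z}_t=(A\mathbf{Z}_t+(\hat\theta_t-\theta_t)\mathbf{u})\,dt$ with $\mathbf{Z}_0=0$ --- there is no martingale part because $\Sigma$ does not depend on the control --- so that $\sup_{t\le T}|\mathbf{Z}_t|$ is bounded thanks to the compactness of $\Theta$. Differentiating $J_N(\theta^{\varepsilon})$ at $\varepsilon=0$, the chain rule (the cost $R_N(\mathbf{X}^{\theta^{\varepsilon}}_t,\mathbf{Y};\theta^{\varepsilon}_t)$ depends on $\varepsilon$ both through the state and through the control argument) gives
\[
\left.\frac{d}{d\varepsilon}J_N(\theta^{\varepsilon})\right|_{\varepsilon=0}
=\Ex\!\left[\nabla_{\mathbf{x}}L_N(\mathbf{X}^\theta_T,\mathbf{Y})^{\top}\mathbf{Z}_T
+\int_0^T\!\Big(\nabla_{\mathbf{x}}R_N(\mathbf{X}^\theta_t,\mathbf{Y};\theta_t)^{\top}\mathbf{Z}_t
+\partial_\theta R_N(\mathbf{X}^\theta_t,\mathbf{Y};\theta_t)\,(\hat\theta_t-\theta_t)\Big)dt\right].
\]

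Next I would remove the dependence on $\mathbf{Z}$ by a duality argument. Applying It\^o's formula to $t\mapsto\langle\mathbf{p}^\theta_t,\mathbf{Z}_t\rangle$ and using the adjoint BSDE \eqref{eq:adjoint-SDE}, the term $\langle\mathbf{Z}_t,A^{\top}\mathbf{p}^\theta_t\rangle$ cancels $\langle A\mathbf{Z}_t,\mathbf{p}^\theta_t\rangle$, leaving
\[
d\langle\mathbf{p}^\theta_t,\mathbf{Z}_t\rangle
=-\tfrac{2\beta}{N}\langle\mathbf{X}^\theta_t-\mathbf{Y},\mathbf{Z}_t\rangle\,dt
+\langle\mathbf{p}^\theta_t,\mathbf{u}\rangle\,(\hat\theta_t-\theta_t)\,dt
+\mathbf{Z}_t^{\top}\mathbf{q}^\theta_t\,d\mathbf{W}^0_t .
\]
Integrating over $[0,T]$ with $\mathbf{Z}_0=0$, taking expectations (the stochastic integral has zero mean by \eqref{eq:pq-bound} and the boundedness of $\mathbf{Z}$), and then using the terminal condition $\mathbf{p}^\theta_T=\nabla_{\mathbf{x}}L_N(\mathbf{X}^\theta_T,\mathbf{Y})$ together with $\nabla_{\mathbf{x}}R_N(\mathbf{X}^\theta_t,\mathbf{Y};\theta_t)=\tfrac{2\beta}{N}(\mathbf{X}^\theta_t-\mathbf{Y})$, I obtain
\[
\Ex\!\left[\nabla_{\mathbf{x}}L_N(\mathbf{X}^\theta_T,\mathbf{Y})^{\top}\mathbf{Z}_T
+\int_0^T\nabla_{\mathbf{x}}R_N(\mathbf{X}^\theta_t,\mathbf{Y};\theta_t)^{\top}\mathbf{Z}_t\,dt\right]
=\Ex\!\left[\int_0^T\langle\mathbf{p}^\theta_t,\mathbf{u}\rangle\,(\hat\theta_t-\theta_t)\,dt\right].
\]
Substituting this into the previous display and using $\partial_\theta R_N(\mathbf{x},\mathbf{y};\theta)=2\lambda\theta$ gives the claim with $\eta^\theta_t=2\lambda\theta_t+\sum_{i=1}^N u_i p^{\theta,i}_t$, which is precisely $\nabla_\theta\mathcal{T}(\mathbf{X}^\theta_t,\theta_t,\mathbf{p}^\theta_t,\mathbf{q}^\theta_t)$.

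The only genuinely delicate point is the $L^2$-differentiability of the flow $\varepsilon\mapsto\mathbf{X}^{\theta^{\varepsilon}}$ and the legitimacy of exchanging $\partial_\varepsilon$ with $\Ex$; in the present linear--quadratic model with control-free diffusion this is exactly what Corollary 4.11 of \cite{Carmona16} delivers once the hypotheses recorded above are verified, and the remaining steps --- the It\^o expansion, the cancellation of the $A$-terms, and the vanishing of the martingale term --- are routine.
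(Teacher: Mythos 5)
Your proposal is correct: the paper itself offers no written proof of this lemma beyond invoking Corollary 4.11 of \cite{Carmona16}, and your argument is exactly the standard derivation underlying that citation --- verification of the convexity, affine-drift, constant-diffusion and quadratic-cost hypotheses, followed by the variation process $\mathbf{Z}$ and the duality computation via It\^o's formula applied to $\langle\mathbf{p}^\theta_t,\mathbf{Z}_t\rangle$ with the adjoint BSDE \eqref{eq:adjoint-SDE}. All steps check out (in fact, since $\theta\mapsto\mathbf{X}^\theta$ is affine one even has $\mathbf{X}^{\theta^\varepsilon}=\mathbf{X}^\theta+\varepsilon\mathbf{Z}$ exactly, so the differentiability issue you flag as delicate is immediate here), and the integrability justifications via Lemma \ref{lem:estimates} and \eqref{eq:pq-bound} are the right ones.
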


The next result provides the construction of an optimal weak control using a different approach from Proposition \ref{prop:ex-op-relaxed}.
\begin{proposition}\label{prop:optimal-existence}
Let $\nu\in{\cal P}(\Xi^{\mathbb{N}})$ with $\sup_{i\in\N}\int_{\Xi^{\N}}(|x_i|^{2+\varrho}+|y_i|^2)\nu(d({\bf x},{\bf y}))<\infty$. Under assumptions {\Asa} and {\Ath}, there exists an optimal control $Q^*\in\mathcal{Q}(\nu)$ to the weak control problem \eqref{eq:objectfuncRex}. Moreover, let $\widehat{\cal X}:=(\widehat{\bm\zeta},(\widehat{W}^0,\widehat{\mathbf{W}}),\widehat{\theta}^*)$ be its coordinate process on $\Omega_{\infty}$, and $(\widehat{\mathbf{p}}^*,\widehat{\mathbf{q}}^*)=(\widehat{\mathbf{p}}_t^*,\widehat{\mathbf{q}}_t^*)_{t\in[0,T]}$ be the solution to the adjoint BSDE \eqref{eq:adjoint-SDE} with $(\mathbf{X}^\theta,\mathbf{Y},\mathbf{W}^0,\theta)$ replaced by $\widehat{\cal X}$ on the canonical probability space $(\Omega_{\infty},\widehat{\F}_{\infty},\widehat{\Fx},Q^*)$. Then, the optimal open-loop control $\widehat{\theta}^*=(\widehat{\theta}_t^*)_{t\in[0,T]}$ to the problem \eqref{eq:control-problem} driven by $(\widehat{\bm\zeta},(\widehat{W}^0,\widehat{\mathbf{W}}))$ satisfies that
\begin{equation}\label{eq:theta-star}
\widehat{\theta}^*_t = \Pi_{\Theta}\left(-\frac{1}{2\lambda}\sum_{j=1}^{N}u_i\widehat{p}^{j,*}_t\right), \quad t\in[0,T],
\end{equation}
where $\Pi_{\Theta}$ is the projection on $\Theta$ (see \eqref{eq:optimalMar}).
\end{proposition}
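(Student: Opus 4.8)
The plan is to construct the optimal weak control from an approximately stationary minimizing sequence of the strong problem produced by Ekeland's variational principle, upgrade approximate optimality to a genuine minimizer of $J_N$ using its strong convexity, read off the projection identity in the limit, and finally transport everything to the canonical space through the equivalence \eqref{equitwo} established in Proposition~\ref{prop:ex-op-relaxed}.

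First I would observe that $(\mathbb{U}^{\Px,\Fx},\|\cdot\|_{\Hx^2})$ is a complete metric space on which $J_N$ is continuous (from the $L^2$-stability of the linear state SDE \eqref{eq:finite-x} in the control together with the uniform bounds of Lemma~\ref{lem:estimates} and dominated convergence) and bounded below since $J_N\ge 0$. Ekeland's variational principle then furnishes, for each $n\ge 1$, a control $\theta^n\in\mathbb{U}^{\Px,\Fx}$ with $J_N(\theta^n)\le\inf_{\theta}J_N(\theta)+\tfrac1n$ that minimizes $\theta\mapsto J_N(\theta)+\tfrac1{\sqrt n}\|\theta-\theta^n\|_{\Hx^2}$ over $\mathbb{U}^{\Px,\Fx}$. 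Perturbing $\theta^n$ towards an arbitrary $\hat\theta\in\mathbb{U}^{\Px,\Fx}$ along $\theta^n+\varepsilon(\hat\theta-\theta^n)$, dividing by $\varepsilon$, letting $\varepsilon\downarrow0$, and invoking the G\^{a}teaux derivative formula of Lemma~\ref{lem:G-derivative} yields the approximate first-order condition
\[
\Ex\Big[\int_0^T\eta^{\theta^n}_t(\hat\theta_t-\theta^n_t)\,dt\Big]\ \ge\ -\frac1{\sqrt n}\,\|\hat\theta-\theta^n\|_{\Hx^2},\qquad \forall\,\hat\theta\in\mathbb{U}^{\Px,\Fx},
\]
where $\eta^{\theta^n}_t=2\lambda\theta^n_t+\sum_{j=1}^N u_j p^{\theta^n,j}_t$ and $(\mathbf p^{\theta^n},\mathbf q^{\theta^n})$ is the unique solution of the adjoint BSDE \eqref{eq:adjoint-SDE} with control $\theta^n$.

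The quantitative core is to test this inequality against the admissible competitor $\hat\theta^n_t:=\Pi_\Theta\big(-\tfrac1{2\lambda}\sum_{j=1}^N u_j p^{\theta^n,j}_t\big)$, which lies in $\mathbb{U}^{\Px,\Fx}$ by assumption~\Ath\ and the bound \eqref{eq:pq-bound}. Since $\hat\theta^n_t$ minimizes the $2\lambda$-strongly convex map $\theta\mapsto\lambda\theta^2+\big(\sum_j u_j p^{\theta^n,j}_t\big)\theta$ over the convex set $\Theta$, combining the strong-convexity lower bound with the convexity upper bound gives the pointwise estimate $\lambda\,|\theta^n_t-\hat\theta^n_t|^2\le\eta^{\theta^n}_t(\theta^n_t-\hat\theta^n_t)$; integrating and using the displayed inequality with $\hat\theta=\hat\theta^n$ yields $\lambda\|\theta^n-\hat\theta^n\|_{\Hx^2}^2\le\tfrac1{\sqrt n}\|\theta^n-\hat\theta^n\|_{\Hx^2}$, hence $\|\theta^n-\hat\theta^n\|_{\Hx^2}\le\tfrac1{\lambda\sqrt n}\to0$. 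Next, because $\mathbf X^{\theta}$ depends affinely on $\theta$ while $L(x,y)=|x-y|^2$ is convex, $J_N$ is itself $2\lambda$-strongly convex on $\Hx^2$; applied to the minimizing sequence this forces $\{\theta^n\}$ to be Cauchy in $\Hx^2$, so $\theta^n\to\theta^*$ for some $\theta^*\in\mathbb{U}^{\Px,\Fx}$ which, by continuity of $J_N$, is its unique minimizer. The $L^2$-stability of the coupled forward--backward system \eqref{eq:finite-x}--\eqref{eq:adjoint-SDE} in the control gives $\mathbf p^{\theta^n}\to\mathbf p^{\theta^*}$, and the $1$-Lipschitz continuity of $\Pi_\Theta$ then yields $\hat\theta^n\to\hat\theta^*$ in $\Hx^2$, where $\hat\theta^*_t:=\Pi_\Theta\big(-\tfrac1{2\lambda}\sum_j u_j p^{\theta^*,j}_t\big)$; together with $\|\theta^n-\hat\theta^n\|_{\Hx^2}\to0$ this identifies $\theta^*=\hat\theta^*$, i.e. $\theta^*_t=\Pi_\Theta\big(-\tfrac1{2\lambda}\sum_j u_j p^{\theta^*,j}_t\big)$, $\Px(d\omega)\otimes dt$-a.e.

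Finally I would set $Q^*:=\Px\circ(\bm\zeta,(W^0,\mathbf W),\theta^*)^{-1}$ and verify the three conditions of Definition~\ref{def:relax-sol}, which are immediate from $\theta^*\in\mathbb{U}^{\Px,\Fx}$ and the moment hypothesis on $\nu$; a change of variables then gives $J_N^R(Q^*)=J_N(\theta^*)=\inf_{\theta}J_N(\theta)$, which equals $V_N^R(\nu)$ by \eqref{equitwo}, so $Q^*$ is an optimal weak control. Since the adjoint BSDE \eqref{eq:adjoint-SDE} is linear, its unique solution is a measurable functional of the driving data, so the solution $(\widehat{\mathbf p}^*,\widehat{\mathbf q}^*)$ on $(\Omega_\infty,\widehat{\F}_\infty,\widehat{\Fx},Q^*)$ has the same joint law with the coordinate process $\widehat{\mathcal X}$ as $(\mathbf p^{\theta^*},\mathbf q^{\theta^*})$ has with $(\bm\zeta,(W^0,\mathbf W),\theta^*)$; the projection identity therefore transports to \eqref{eq:theta-star}, $Q^*(d\omega)\otimes dt$-a.e. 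I expect the main obstacle to be this limit passage, namely establishing $\Hx^2$-stability of the forward state SDE coupled with the backward adjoint equation under control perturbations, with constants uniform over the bounded admissible controls, strongly enough to carry the pointwise projection identity through to the limit, together with the routine but slightly delicate check that the BSDE solution lifts to the canonical space as a measurable functional so that its law is preserved.
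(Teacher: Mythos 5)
Your argument is correct, but it departs from the paper's proof at the decisive step. The paper also starts from Ekeland's variational principle, but instead of your projected competitor it derives the explicit representation \eqref{eq:theta-k} of the approximate minimizers through a normal-cone/subdifferential analysis (Lemma~\ref{lem:Ekeland}), and then obtains the optimal weak control by a compactness-in-law argument: using \eqref{eq:theta-k} and the adjoint bound \eqref{eq:pq-bound} it verifies tightness of the laws $Q^k=\Px\circ({\bm\zeta},(W^0,\mathbf{W}),\theta^k)^{-1}$ on $\Omega_\infty$ (via an $L^2$ time-increment criterion for $\theta^k$), extracts a weak limit $Q^*$ by Prokhorov and Skorokhod, and identifies it as optimal through the continuity of $J_N$ (Lemma~\ref{lem:ob-continuous}) and the equivalence \eqref{equitwo}; the limit control is thus obtained only in distribution, which is exactly why the weak formulation is invoked. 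You instead exploit the convex structure (affine dependence of $\mathbf{X}^\theta$ on $\theta$, quadratic costs, the $\lambda\theta^2$ term): the quantitative strong-convexity estimate $\lambda|\theta^n_t-\hat\theta^n_t|^2\le\eta^{\theta^n}_t(\theta^n_t-\hat\theta^n_t)$ against the projected competitor, combined with the Ekeland first-order inequality and the $2\lambda$-strong convexity of $J_N$ on $\Hx^2$, yields norm convergence $\theta^n\to\theta^*$ to the unique strong minimizer and a clean derivation of the fixed-point identity $\theta^*_t=\Pi_\Theta\bigl(-\tfrac{1}{2\lambda}\sum_j u_j p^{\theta^*,j}_t\bigr)$, after which pushing forward to $Q^*=\Px\circ({\bm\zeta},(W^0,\mathbf{W}),\theta^*)^{-1}$ and using \eqref{equitwo} finishes the existence claim. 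Your route buys more: strong $\Hx^2$ convergence, uniqueness of the optimal strong control, and an explicit verification of \eqref{eq:theta-star} that the paper's proof leaves largely implicit (indeed, with strong convexity Ekeland is dispensable, since the exact minimizer and its first-order condition already give the projection formula). What it does not provide is the tightness machinery (the criteria for the laws of the controls and the increment estimate for $\mathbf{p}^k$) that the paper deliberately develops inside this proof and reuses verbatim for the sequence $\{Q_N\}_{N\geq1}$ in the proof of Theorem~\ref{thm:MainThm}, where convexity of a single $J_N$ no longer yields compactness across $N$; so if your proof replaced the paper's, that argument would have to be supplied separately. Your final transport of the projection identity to the canonical space (measurable-functional/uniqueness-in-law for the linear adjoint BSDE, and the implicit martingale-representation issue for the filtration enlarged by $\widehat{\theta}^*$) is at the same level of rigor as the paper, which does not address this step in its proof at all, so flagging it as a routine but delicate point is acceptable.
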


In preparation for the proof of Proposition \ref{prop:optimal-existence}, we first establish a minimizing sequence by using Ekeland's variational principle. Similar to the proof of Theorem 2.6 in \cite{Barbu-R-Z18}, we characterize the minimizing sequence by the associated adjoint processes and the projection mapping.

\begin{lemma}\label{lem:ob-continuous}
Under the assumptions {\Asa} and {\Ath}, the objective functional $J_N(\theta):\mathbb{U}^{\Px,\Fx}\to\R$ is continuous with respect to the metric induced by the $\Hx^2$-norm.
\end{lemma}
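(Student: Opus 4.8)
The plan is to show that $\theta\mapsto J_N(\theta)$ is continuous on $(\mathbb{U}^{\Px,\Fx},\|\cdot\|_{\Hx^2})$ by establishing the continuity of the map $\theta\mapsto\mathbf{X}^\theta$ from $\mathbb{U}^{\Px,\Fx}$ into the space of square-integrable continuous processes, and then passing this continuity through the terminal and running cost functionals, which are quadratic and hence continuous on the relevant spaces. First I would take a sequence $\theta^{(n)}\to\theta$ in $\Hx^2$, write $\Delta^n_t:=\mathbf{X}^{\theta^{(n)}}_t-\mathbf{X}^{\theta}_t$, and note from the compact form \eqref{eq:finite-x} that $\Delta^n$ solves a linear SDE with \emph{no} diffusion term in the difference, namely $d\Delta^n_t = A\Delta^n_t\,dt + (\theta^{(n)}_t-\theta_t)\mathbf{u}\,dt$ with $\Delta^n_0=0$. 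Applying Gr\"onwall's inequality after a standard $\sup_{s\le t}$ estimate yields
\begin{equation*}
\Ex\left[\sup_{t\in[0,T]}|\Delta^n_t|^2\right]\leq C(N,K,T)\,\Ex\left[\int_0^T|\theta^{(n)}_t-\theta_t|^2\,dt\right]=C(N,K,T)\,\|\theta^{(n)}-\theta\|_{\Hx^2}^2\longrightarrow 0,
\end{equation*}
where $C(N,K,T)$ depends only on $|A|$ (controlled by $K$ via {\Asa}), $|\mathbf{u}|$ and $T$.

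Next I would handle the cost functionals. For the terminal term, $|L_N(\mathbf{X}^{\theta^{(n)}}_T,\mathbf{Y})-L_N(\mathbf{X}^{\theta}_T,\mathbf{Y})|$ is controlled, via the elementary inequality $|\,|a|^2-|b|^2|\le |a-b|(|a|+|b|)$ applied coordinatewise, by $\tfrac{\alpha}{N}\sum_i|\Delta^{n,i}_T|(|\mathbf{X}^{\theta^{(n)},i}_T-Y^i|+|\mathbf{X}^{\theta,i}_T-Y^i|)$; taking expectations and using Cauchy--Schwarz together with the uniform moment bound of Lemma \ref{lem:estimates} (or the linear-SDE moment estimate directly) gives $\Ex[|L_N(\mathbf{X}^{\theta^{(n)}}_T,\mathbf{Y})-L_N(\mathbf{X}^{\theta}_T,\mathbf{Y})|]\le C\,\|\Delta^n\|_{\Hx^2}\to 0$. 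The running-cost term $\tfrac{\beta}{N}\sum_i L(x_i,y_i)$ is treated identically after integrating in $t$, using $\Ex[\sup_t|\Delta^n_t|^2]\to0$. The only genuinely $\theta$-dependent piece is the control penalty $\lambda\Ex[\int_0^T|\theta^{(n)}_t|^2\,dt]$, whose convergence to $\lambda\Ex[\int_0^T|\theta_t|^2\,dt]$ is immediate from $\|\theta^{(n)}\|_{\Hx^2}\to\|\theta\|_{\Hx^2}$. Combining the three pieces gives $J_N(\theta^{(n)})\to J_N(\theta)$.

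I do not expect any serious obstacle here: every estimate is standard for linear SDEs with bounded coefficients, and the moment bounds needed are already supplied by Lemma \ref{lem:estimates}. The one point requiring mild care is ensuring the constants in the Gr\"onwall step are uniform over the relevant objects — but since $N$ is fixed in this lemma and {\Asa} bounds $|\xi^i|$, hence $|A|$ and $|\mathbf{u}|$, uniformly, this is automatic. If one prefers to avoid even the moment bound of Lemma \ref{lem:estimates}, one can instead bound $\Ex[\sup_t|\mathbf{X}^\theta_t|^2]$ directly from \eqref{eq:finite-x} using the compactness of $\Theta$ from {\Ath}; either route closes the argument.
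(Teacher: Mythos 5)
Your proposal is correct and follows essentially the same route as the paper: a Grönwall estimate on the (diffusion-free) linear SDE satisfied by the difference $\mathbf{X}^{\theta^{(n)}}-\mathbf{X}^{\theta}$, giving $\Ex[\sup_{t\in[0,T]}|\Delta^n_t|^2]\leq C\|\theta^{(n)}-\theta\|^2_{\Hx^2}$, followed by feeding this into the quadratic terminal and running costs via Cauchy--Schwarz and the moment bounds of Lemma \ref{lem:estimates}, with the control penalty handled by norm convergence. Your write-up is if anything slightly more explicit than the paper's (which compresses the quadratic-cost estimate into a single display), but there is no substantive difference in the argument.
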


The proof is delegated in Appendix \ref{app:proof1}. According to Ekeland's variational principle (see, e.g. Theorem 1.45 in \cite{BauschkeCombettes11}), there exists a sequence $\{\theta^k\}_{k\geq 1}\subset\mathbb{U}^{\Px,\Fx}$, s.t.
\begin{equation}\label{eq:theta-pre-k}
J_N(\theta^k)\leq J_N(\theta)+ \frac{1}{k}\|\theta^k-\theta\|_{\mathbb{H}^2}, \qquad\forall~\theta\in \mathbb{U}^{\Px,\Fx}.
\end{equation}
Moreover, we can characterise $\theta^k$ by the associated adjoint process in the next result. The proof of the next lemma is reported in Appendix \ref{app:proof1}.
\begin{lemma}\label{lem:Ekeland}
For the sequence $\{\theta^k\}_{k\geq1}$ given by Ekeland's variational principle that satisfies \eqref{eq:theta-pre-k}, let $\mathbf{X}^{k}$ be the controlled diffusion under $\theta^k$ and $(\mathbf{p}^k,\mathbf{q}^k)$ be the solution to the adjoint BSDE \eqref{eq:adjoint-SDE} with $\theta$ replaced by $\theta^k$. For $t\in[0,T]$, there exists $\chi^k\in \Hx^2$ with $\|\chi^k\|_{\Hx^2}\leq 1$, such that $\theta_t^k$ satisfies
\begin{equation*}
\theta^k_t+\frac{1}{2\lambda}\tilde{\varphi}^k_t=-\frac{1}{2\lambda}\sum_{i=1}^N u_ip^{k,i}_t-\frac{1}{2\lambda k}\chi^k_t,
\end{equation*}
where $\tilde{\varphi}^k_t\in N_\Theta(\theta^k_t)$ and $N_\Theta(\theta^k_t)$ stands for the normal cone to $\Theta$ at $\theta_t^k$. Therefore, $\theta^k=(\theta_t^k)_{t\in[0,T]}$ admits the representation that
\begin{equation}\label{eq:theta-k}
\theta^k_t=\Pi_\Theta\left( -\frac{1}{2\lambda}\sum_{i=1}^N u_ip^{k,i}_t-\frac{1}{2\lambda k}\chi^k_t \right),\quad t\in[0,T].
\end{equation}
\end{lemma}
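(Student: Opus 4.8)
The plan is to prove Lemma~\ref{lem:Ekeland} by combining the G\^ateaux-derivative formula from Lemma~\ref{lem:G-derivative} with the defining inequality \eqref{eq:theta-pre-k} of the Ekeland sequence, and then passing from a variational inequality to an explicit projection formula via the theory of normal cones. First I would fix $k$ and $\theta^k$ satisfying \eqref{eq:theta-pre-k}, and for an arbitrary competitor $\theta\in\mathbb{U}^{\Px,\Fx}$ write $\theta^{k,\varepsilon}:=\theta^k+\varepsilon(\theta-\theta^k)$ for $\varepsilon\in[0,1]$, which stays in $\mathbb{U}^{\Px,\Fx}$ by convexity of $\Theta$ (assumption {\Ath}). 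Applying \eqref{eq:theta-pre-k} with $\theta^{k,\varepsilon}$ in place of $\theta$ gives $J_N(\theta^k)-J_N(\theta^{k,\varepsilon})\leq \frac1k\|\theta^{k,\varepsilon}-\theta^k\|_{\Hx^2}=\frac{\varepsilon}{k}\|\theta-\theta^k\|_{\Hx^2}$; dividing by $\varepsilon$ and letting $\varepsilon\downarrow0$, Lemma~\ref{lem:G-derivative} yields the variational inequality
\[
\Ex\left[\int_0^T\eta^{k}_t(\theta_t-\theta^k_t)\,dt\right]\geq -\frac1k\|\theta-\theta^k\|_{\Hx^2},\qquad \forall~\theta\in\mathbb{U}^{\Px,\Fx},
\]
where $\eta^k_t=2\lambda\theta^k_t+\sum_{i=1}^N u_i p^{k,i}_t$ is built from the adjoint BSDE \eqref{eq:adjoint-SDE} with $\theta$ replaced by $\theta^k$.

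Next I would convert this into a pointwise (in $(\omega,t)$) statement. The right-hand side $-\frac1k\|\theta-\theta^k\|_{\Hx^2}$ is the obstacle to a clean pointwise argument, so the standard device is to introduce, on the product space $(\Omega\times[0,T],\Px\otimes dt)$, the Hilbert space $\Hx^2$ and note that $\theta\mapsto -\frac1k\|\theta-\theta^k\|_{\Hx^2}$ is (minus) a norm; hence the inequality says that $-\eta^k$ belongs, up to an element of the unit ball, to the normal cone (in $\Hx^2$) of the closed convex set $\{\theta\in\Hx^2:\theta_t\in\Theta\ \Px\otimes dt\text{-a.s.}\}$ at the point $\theta^k$. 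Concretely, by a measurable-selection/Lagrange-multiplier argument (exactly as in the proof of Theorem~2.6 of \cite{Barbu-R-Z18}), there exists $\chi^k\in\Hx^2$ with $\|\chi^k\|_{\Hx^2}\leq 1$ such that $\eta^k_t+\frac1k\chi^k_t$ is, for $\Px\otimes dt$-a.e.\ $(\omega,t)$, an element of the normal cone $N_\Theta(\theta^k_t)$ of the finite-dimensional convex set $\Theta$; writing this out with the explicit form of $\eta^k$ gives
\[
2\lambda\theta^k_t+\sum_{i=1}^N u_i p^{k,i}_t+\frac1k\chi^k_t=-\tilde\varphi^k_t,\qquad \tilde\varphi^k_t\in N_\Theta(\theta^k_t),
\]
which is exactly the first displayed identity in the lemma after dividing by $2\lambda$ and relabeling.

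Finally I would invoke the classical characterization of the Euclidean projection onto a nonempty closed convex set: for $z\in\Theta$ and $v\in\R$, one has $v-z\in N_\Theta(z)$ if and only if $z=\Pi_\Theta(v)$. Applying this with $z=\theta^k_t$ and $v=-\frac1{2\lambda}\sum_{i=1}^N u_ip^{k,i}_t-\frac1{2\lambda k}\chi^k_t$ — which is legitimate pointwise since $\theta^k_t\in\Theta$ and the relation $\theta^k_t+\frac1{2\lambda}\tilde\varphi^k_t=v$ with $\tilde\varphi^k_t\in N_\Theta(\theta^k_t)$ is precisely $v-\theta^k_t\in N_\Theta(\theta^k_t)$ — yields the representation \eqref{eq:theta-k}, i.e.\ $\theta^k_t=\Pi_\Theta\big(-\frac1{2\lambda}\sum_{i=1}^N u_ip^{k,i}_t-\frac1{2\lambda k}\chi^k_t\big)$.

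\textbf{The main obstacle} is the passage from the $\Hx^2$-level variational inequality with the $\frac1k\|\cdot\|_{\Hx^2}$ perturbation to the pointwise normal-cone inclusion with a measurable selector $\chi^k$ of norm at most one: one must argue that the $\Hx^2$-subgradient of the perturbation term can be realized as a $\Px\otimes dt$-measurable process $\chi^k_t$ with $\|\chi^k\|_{\Hx^2}\le 1$ and that the remaining inclusion decouples pointwise over $(\omega,t)$. This is handled by a measurable-selection theorem together with the interchange of pointwise minimization and expectation (the running cost and constraint are both separable in $(\omega,t)$), and the argument is essentially identical to the one in \cite{Barbu-R-Z18}; the finite-dimensionality of $\Theta$ makes the normal-cone bookkeeping elementary once the selector is in hand.
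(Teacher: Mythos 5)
Your proposal is correct and reaches the paper's conclusion through essentially the same architecture, but with a more elementary derivation of the first-order condition. The paper penalizes with the indicator function of $\mathbb{U}^{\Px,\Fx}$, works with the subdifferential of $\widehat{J}_{N,k}$, and invokes upper subderivatives, Clarke tangent cones and Rockafellar's sum rule \cite{Rockafellar79} to obtain $0\in\eta^k+\mathcal{N}_{\mathbb{U}^{\Px,\Fx}}(\theta^k)+\frac1k\partial\big(\|\theta^k-\cdot\|_{\Hx^2}\big)$; you instead insert the convex combination $\theta^k+\varepsilon(\theta-\theta^k)$ into the Ekeland inequality \eqref{eq:theta-pre-k}, divide by $\varepsilon$ and apply Lemma~\ref{lem:G-derivative}, which gives the same conclusion by an ordinary directional-derivative argument plus convex calculus in $\Hx^2$ (the subdifferential of the norm at the origin being the unit ball); this is legitimate because $\Theta$, hence $\mathbb{U}^{\Px,\Fx}$, is convex, and it avoids the nonsmooth machinery. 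From that point on the two proofs coincide: identify $\mathcal{N}_{\mathbb{U}^{\Px,\Fx}}(\theta^k)$ with the pointwise cone $\{y\in\Hx^2:\,y_t\in N_\Theta(\theta^k_t)\ \text{a.e.}\}$ and conclude \eqref{eq:theta-k} from the characterization $v-z\in N_\Theta(z)\Leftrightarrow z=\Pi_\Theta(v)$. Two remarks. First, the step you single out as the main obstacle is exactly the paper's identity \eqref{eq:normal-cone-R}, which is proved there by a short projection trick (set $z_t=\Pi_\Theta(\theta^k_t+y_t)$, test the normal-cone inequality with $z$, and deduce $\|z-\theta^k\|_{\Hx^2}\le 0$); no measurable-selection theorem or interchange of minimization and expectation is needed, so you could make your argument self-contained by reproducing that trick rather than deferring to \cite{Barbu-R-Z18}. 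Second, there is a sign slip in your prose: the quantity $\eta^k_t+\frac1k\chi^k_t$ lies in $-N_\Theta(\theta^k_t)$, not in $N_\Theta(\theta^k_t)$; your earlier sentence and your displayed identities are the correct version, so this is only a typo, not a gap.
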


With the representation \eqref{eq:theta-k} of the minimizing sequence, we can finally prove Propostion \ref{prop:optimal-existence}. In particular, we show that the sequence of laws of the minimizing sequence $\{\theta^k\}_{k\geq 1}$ is tight and its limiting process induces one optimal weak control to the problem \eqref{eq:objectfuncRex}. As a consequence, we can obtain more properties on the optimal weak control, which yield the relative compactness of the minimizers.

\begin{proof}[Proof of Proposition \ref{prop:optimal-existence}]
Let $\{\theta^k\}_{k\geq1}$ be the sequence derived from the Ekeland's variational principle satisfying \eqref{eq:theta-pre-k} with the representation \eqref{eq:theta-k} in Lemma~\ref{lem:Ekeland}. Recall that ${\bm\zeta}=(\zeta^{i})_{i=1}^{\infty}$ is a $\Xi^{\N}$-valued r.v. with the law $\Px\circ{\bm\zeta}^{-1}=\nu$, and $(W^{0},\mathbf{W})$ is a sequence of independent Brownian motions under the original probability space $(\Omega,\F,\Px)$. Let $Q^k:=\Px\circ({\bm\zeta},(W^0,{\bf W}),\theta^k)^{-1}$, which is a probability measure on $\Omega_\infty$, and $\widehat{\cal X}^k:=(\widehat{\bm\zeta}^k,(\widehat{W}^0,{\bf \widehat{W}})^k,\widehat{\theta}^k)$ be its coordinate process on $\Omega_{\infty}$, i.e., $\widehat{\cal X}^k(\omega)=\omega$ for all $\omega\in\Omega$. Define $\widehat{\Fx}^k=(\F^{\widehat{\cal X}^k}_t)_{t\in[0,T]}$ the natural filtration generated by $\widehat{\cal X}^k$. By the definition of $Q^k$, it is easy to check that $Q^k\in Q(\nu)$ because: (i) $Q^k\circ(\widehat{{\bm\zeta}}^k)^{-1}=\Px\circ({\bm\zeta})^{-1}=\nu$; (ii) $(\widehat{W}^0,\widehat{\bf W})^k$ is independent Wiener process on $(\Omega_{\infty},\widehat{\F}_{\infty}^k,\widehat{\Fx}^k,Q^k)$; (iii) $\widehat{\theta}^k\in\mathbb{U}^{Q^k,\widehat{\Fx}^k}$.

We first show that the sequence $\{Q^k\}_{k\geq 1}\subset Q(\nu)$ is tight. Note that $(\Omega_\infty,d)$ is Polish and the marginal distributions on $\Xi^{\N}\times\mathcal{C}_T\times\mathcal{C}_T^\mathbb{N}$ equal to $\Px\circ({\bm\zeta},(W^0,{\bf W}))^{-1}$, we only need to show $\{Q_k\circ(\widehat{\theta}^k)^{-1}\}_{k\geq1}=\{\Px\circ(\theta^k)^{-1}\}_{k\geq1}$ is tight. In light of Lemma A.2 of \cite{Barbu-R-Z18}, it's sufficient to prove the following two conditions:
\begin{equation}\label{eq:space-criterion}
  \lim_{R\to\infty}\limsup_{k\to\infty}Q^k\left(y\in\mathcal{L}^2_T:\int_0^T|y_t|^2dt>R \right)=0,
\end{equation}
and for any $\varepsilon>0$,
\begin{equation}\label{eq:time-criterion}
  \lim_{\delta\to0}\limsup_{k\to\infty}Q^k\left(y\in\mathcal{L}^2_T:\sup_{h\in(0,\delta]}\int_0^{T-h} |y_{t+h}-y_t|^2dt>\varepsilon \right)=0.
\end{equation}

In fact, the compact containment condition \eqref{eq:space-criterion} is clearly satisfied by the compactness of $\Theta$. To prove \eqref{eq:time-criterion}, by the Chebyshev's inequality, it suffices to show
\begin{equation}\label{eq:time-criterion-p1}
  \lim_{\delta\to0}\limsup_{k\to\infty}\Ex\left[ \sup_{h\in(0,\delta]}\int_{0}^{T-h}|\theta^k_{t+h}-\theta^k_t|^2dt\right]=0.
\end{equation}
In light of \eqref{eq:theta-k} and the Lipschitz continuity of the projection mapping $\Pi_\Theta$, we have
\begin{align}
  \Ex\left[ \sup_{h\in(0,\delta]}\int_{0}^{T-h}|\theta^k_{t+h}-\theta^k_t|^2dt\right]
  \leq& C\Ex\left[\sup_{h\in(0,\delta]}\int_{0}^{T-h}\sum_{i=1}^N|p^{k,i}_{t+h}-p^{k,i}_t|^2 +\frac{1}{k}|\chi^k_{t+h}-\chi^k_t|^2dt\right]\nonumber\\
  \leq& C\Ex\left[\sup_{h\in(0,\delta]}\int_{0}^{T-h}|\mathbf{p}^k_{t+h}-\mathbf{p}^k_t|^2dt\right] +\frac{C}{k},\label{eq:time-criterion-p2}
\end{align}
where $C>0$ is a generic positive constant independent of $k$ and $\delta$ that may change from place to place. Moreover, by the Doob's maximal inequality and Lemma \ref{lem:estimates}, we have
\begin{align*}
  \Ex\left[ \sup_{h\in(0,\delta]}|\mathbf{p}^k_{t+h}-\mathbf{p}^k_t|^2\right]&\leq C\Ex\left[ \sup_{h\in(0,\delta]}\int_t^{t+h}\left| A^\top\mathbf{p}^k_s+\frac{2\beta}{N}(\mathbf{X}_s^k-\mathbf{Y}) \right|^2ds\right]+C\Ex\left[ \sup_{h\in(0,\delta]}\left|\int_t^{t+h}\mathbf{q}_s^k d\mathbf{W}_s\right|^2\right]\\
  &\leq C\delta\Ex\left[\sup_{t\in[0,T]}|\mathbf{p}^k_t|^2+\sup_{t\in[0,T]}|\mathbf{X}^k_t|^2+K^2 \right]
  +C\Ex\left[ \int_t^{t+\delta}|\mathbf{q}_s^k|^2 ds\right]\\
  &\leq C\delta+C\Ex\left[ \int_t^{t+\delta}|\mathbf{q}_s^k|^2 ds\right],
\end{align*}
for some $C>0$. It then follows from the Fubini's theorem that
\begin{align*}
  \Ex\left[\sup_{h\in(0,\delta]}\int_{0}^{T-h}|\mathbf{p}^k_{t+h}-\mathbf{p}^k_t|^2dt\right]
  \leq& \Ex\left[\int_0^{T} \sup_{h\in(0,\delta]}|\mathbf{p}^k_{t+h}-\mathbf{p}^k_t|^2dt\right]\\
  \leq& C\delta+C\Ex\left[\int_0^{T}\int_t^{t+\delta}|\mathbf{q}_s^k|^2 dsdt\right]\\
  \leq& C\delta+C\Ex\left[\left(\int_0^\delta\int_0^s+\int_\delta^{T}\int_{s-\delta}^s +\int_{T}^{T+\delta}\int_{s-\delta}^{T-\delta}\right)dt|\mathbf{q}_s^k|^2 ds\right]\\
  \leq& C\delta+C\delta\Ex\left[\int_0^T|\mathbf{q}_s^k|^2 ds\right]\leq C\delta.
\end{align*}
Plugging this back into \eqref{eq:time-criterion-p2}, we get the desired result \eqref{eq:time-criterion-p1} and hence $\{Q^k\}_{k\geq 1}$ is tight.

Thanks to the Prokhorov's theorem, every subsequence $\{Q^k\}_{k\geq 1}$ admits a further subsequence converging weakly to some $Q^*\in\mathcal{Q}(\nu)$. For notational convenience, let us denote the convergent subsequence still by $\{Q^k\}_{k\geq 1}$ and it then follows that $Q^k\Rightarrow Q^*$. By virtue of the Skorokhod's representation theorem, there exists a probability space $(\widetilde{\Omega},\widetilde{\F},\widetilde{\Px})$, on which we can define the processes $(\widetilde{\bm\zeta}^k,(\widetilde{W}^0,\widetilde{\bf W})^k,\widetilde{\theta}^k)$ with $Q^k$, $k=1,2,\ldots$, and $(\widetilde{\bm\zeta}^*,(\widetilde{W}^0,\widetilde{\bf W})^*,\widetilde{\theta}^*)$ with $Q^*$ such that $\widetilde{\theta}^k\to\widetilde{\theta}^*$ a.s. Therefore, there exists $(\widehat{\bm\zeta},(\widehat{W}^0,\widehat{\bf W}),\widehat{\theta}^*)$ on $(\Omega_\infty,\mathcal{F}_\infty)$ satisfying $Q^*\circ(\widehat{\theta}^*)^{-1}=\widetilde{\Px}\circ(\widetilde{\theta}^*)^{-1}$. It then follows from Lemma \ref{lem:ob-continuous} that
\begin{equation*}
J_N^R(Q^*)=J_N(\widetilde{\theta}^*)=\lim_{k\to\infty} J_N(\widetilde{\theta}^k )=\lim_{k\to\infty}  J_N(\theta^k) = \inf_{\theta\in\mathbb{U}^{\Px,\Fx}}J_N(\theta).
\end{equation*}
Note that the representation \eqref{eq:theta-k} ensures that $\hat{\theta}^*$ is $\Theta$-valued, and hence $\|\hat{\theta}^*\|_{\mathbb{H}^2}<\infty$ and $\hat{\theta}^*$ is admissible. At last, by the equivalence result \eqref{equitwo} in Proposition \ref{prop:ex-op-relaxed}, we can conclude that
\begin{equation*}
J_N^R(Q^*)= \inf_{\theta\in\mathbb{U}^{\Px,\Fx}}J_N(\theta)=V_N^R(\nu)=\inf_{Q\in{\cal Q}(\nu)}J_N^R(Q),
\end{equation*}
which shows that $Q^*$ is indeed an optimal weak control.
\end{proof}

We are now in the position to prove the main result.

\begin{proof}[Proof of Theorem \ref{thm:MainThm}]
Let $Q_N$ be an optimal control in Proposition \ref{prop:optimal-existence} for each $N\geq 1$ and $(\widehat{\bm\zeta}^N,(\widehat{W}^{N,0},\widehat{\mathbf{W}}^N),\widehat{\theta}^N)$ be the corresponding coordinate process. With the help of \eqref{eq:pq-bound}, we can show that $Q_N\circ(\widehat{\theta}^N)^{-1}$ is tight similarly to the proof of Prop \ref{prop:optimal-existence}. It follows from Definition~\ref{def:relax-sol} of $\mathcal{Q}(\nu)$ that  $Q_N\circ(\widehat{\bm\zeta}^N)^{-1}=\Px\circ\widehat{\bm\zeta}^{-1}=\nu$ and $Q_N\circ(\widehat{W}^{N,0},\widehat{\mathbf{W}}^N)^{-1}=\Px\circ(\widehat{W}^0,\widehat{\mathbf{W}})^{-1}$. Then $\{Q_N\}_{N\geq 1}$ is tight if and only if $Q_N\circ(\widehat{\theta}^N)^{-1}$ is tight. Thus, it follows from the Prokhorov's theorem that $\{Q_N\}_{N\geq1}$ is relatively compact in $\mathcal{P}(\Omega_\infty)$. Moreover, in view of the definition of the metric defined in \eqref{eq:dinfty}, we have that $d_1$ and $d_2$ are uniformly bounded. On the other hand, $Q_N\circ(\widehat{W}^{N,0})^{-1}$ equals to the 1-dimensional Winer measure and $\Theta$ is compact. We can derive that
\begin{align*}
&\sup_{N\geq1}\int_{\Omega_\infty}d^2((\gamma,w,\varsigma,\kappa),(0,0,0,0))Q_N(d\gamma,dw,d\varsigma,d\kappa)\\
&\quad\leq \sup_{N\geq1}C\Ex^{Q_N}\left[ d_1^2(\widehat{\bm\zeta}_N,0)+d_2^2(\widehat{\mathbf{W}}^N,0)+d_3^2(\widehat{W}^{N,0},0)+d_4^2(\widehat{\theta}^N,0) \right]\\
&\quad=\sup_{N\geq1}C\Ex^{Q_N}\Bigg[\left(\sum_{i=1}^{\infty}2^{-i}\frac{|\widehat{\zeta}^{N,i}|}{1+|\widehat{\zeta}^{N,i}|}\right)^2
+\left(\sum_{i=1}^{\infty}2^{-i}\frac{\sup_{t\in[0,T]}|\widehat{W}_t^{N,i}|}{1+\sup_{t\in[0,T]}|\widehat{W}^{N,i}|}\right)^2\\
&\qquad\qquad\qquad\quad+\sup_{t\in[0,T]}|\widehat{W}_t^{N,0}|^2+\int_0^T|\widehat{\theta}_t^N|^2dt\Bigg]\\
&\quad\leq\sup_{N\geq1} C\Ex^{Q_N}\left[1+\sup_{t\in[0,T]}|\widehat{W}^{N,0}_t|^2+\|\widehat{\theta}^N\|_{\mathcal{L}^2_T}^2\right]<\infty,
\end{align*}
where $C>0$ is a generic positive constant independent of $N$. Therefore, again by Theorem 7.12 of \cite{Villani03}, it follows that $\{Q_N\}_{N\geq1}$ is relatively compact in $(\mathcal{P}_2(\Omega_\infty),\mathcal{W}_{\Omega_\infty,2})$. As a consequence, there exists $Q^*\in\mathcal{Q}(\nu)$ such that $\mathcal{W}_{\Omega_\infty,2}(Q_{N_k},Q^*)\to0$, where $\{Q_{N_k}\}_{N_k\geq1}$ is a convergent subsequence of $\{Q_N\}_{N\geq1}$. Let $(\widehat{\bm\zeta}^*,(\widehat{W}^{*,0},\widehat{\mathbf{W}}^*),\widehat{\theta}^*)$ be the corresponding coordinate process to $Q^*$. Note that
\begin{align}\label{eq:eqn000111}
Q^*\circ I_*(\widehat{\bm\zeta}^*)^{-1}=(Q^*\circ(\widehat{\bm\zeta}^*)^{-1})\circ I_*^{-1}=\nu\circ I_*^{-1}.
\end{align}
As $I_*({\bm\gamma})$ has a square-integrable density w.r.t. Lebesgue measure, $\nu(d{\bm\gamma})$-a.e., we have from  \eqref{eq:eqn000111} that $I_*(\widehat{\bm\zeta}^*)$ also has a square-integrable density (under $Q^*$) w.r.t. Lebesgue measure. Then, we can apply Proposition \ref{prop:gamma-conv} to conclude that $J_{N_k}^R(Q_{N_k})\to J^R(Q^*)$ as $k\to\infty$. Furthermore, from Theorem 2.1 of \cite{Braides14}, it follows that $Q^*$ is indeed a minimizer of $J^R(Q)$ over $Q\in{\cal Q}(\nu)$.
\end{proof}

\begin{remark}\label{rm47}
With the help of Theorem \ref{thm:ObjecFuncConv} and Thereom \ref{thm:MainThm}, we actually have established an approximate optimal weak control to the control problem \eqref{eq:control-problem} under strong formulation when $N$ is sufficiently large. More precisely, let $Q^*\in{\cal Q}(\nu)$ be the minimizer of $J^R(Q)$ over $Q\in{\cal Q}(\nu)$, i.e., $J^R(Q^*)=\inf_{Q\in\mathcal{Q}(\nu)}J^R(Q)$. Then, it follows from Proposition \ref{prop:ex-op-relaxed} that
\begin{align}\label{eq:approximJRNrem}
\lim_{N\to\infty}\left|J^R_N(Q^*)-\inf_{\theta\in\mathbb{U}^{\Px,\Fx}}J_N(\theta)\right|=0.
\end{align}
Indeed, we can see from Theorem~\ref{thm:MainThm} that
\begin{align*}
  &\left|J^R_N(Q^*)-\inf_{\theta\in\mathbb{U}^{\Px,\Fx}}J_N(\theta)\right|
  =\left|J^R_N(Q^*)-\inf_{Q\in{\cal Q}(\nu)}J_N^R(Q)\right|\\
  &\qquad\leq \left|J^R_N(Q^*)-J^R(Q^*)\right| +\left|\inf_{Q\in\mathcal{Q}(\nu)}J^R(Q)-\inf_{Q\in{\cal Q}(\nu)}J_N^R(Q)\right|\to 0,~~N\to\infty,
\end{align*}
where the last line converges to 0 thanks to \eqref{eq:ObjecFuncConv} and \eqref{eq:MinConv}.

It is worth noting that Theorem \ref{thm:convergence-empirical} only gives the weak convergence of $\{\mu^N\}_{N\geq1}$, and we can not use the weak limit to construct the mean field strong control problem directly. Moreover, it is difficult to establish the strong convergence result in $\Hx^2$ for the sequence of optimal strong controls based on Lemma \ref{lem:ex-op-strict} when $N\rightarrow\infty$. This is one main reason that we resort to the weak control formulation in the present paper.
\end{remark}

\begin{remark}
We also note here that the weak formulation and the Gamma-convergence arguments may also work for stochastic game problems in \cite{Carmona-F-S15} and \cite{Sun22} when all component banks are allowed to control their own lending and borrowing strategies. Let $Q_N\in{\cal Q}(\nu)$ be the Nash equilibrium of the $N$-player game in the weak formulation. Similar to Theorem $4.1$, one can possibly show that $\{Q_N\}_{N=1}^{\infty}$ (up to a subsequence) converges to some $Q^*\in{\cal Q}(\nu)$, where $Q^*$ is a mean filed equilibrium to the mean field game problem in the weak sense. Similar to Remark \ref{rm47}, it can be shown that $Q^*$ is an approximate Nash equilibrium for the finite player game when $N$ is sufficiently large.
\end{remark}


\section{Conclusions}
Motivated by the role of the central bank as the regulatory authority in stabilizing the interbank network, this paper studies a centralized monetary supply control in systems with $N$ banks and infinitely many banks by employing the weak control approach. First, in the system with finite heterogenous banks affected by a common noise, the existence of the optimal control under weak formulation is derived by the Ekeland's variational principle. As the number $N$ grows large, we further establish the convergence of optimizers from the model with $N$ banks by exercising the Gamma-convergence arguments in our framework and show that the limiting point corresponds to an optimal weak control in the mean field model.

Several future extensions can be conducted based on the current study. First, it is interesting to consider the monetary reserve dynamics of $N$ banks modelled by jump-diffusion processes similar to \cite{BoCapponi15} and generalize the  weak control approach and the Gamma-convergence arguments in the Skorokhod space. Moreover, the weak control formulation allows us to consider possible extensions of the interbank system to include clusters-hierarchy similar to \cite{Capponi-S-Y20} or multiple populations in the mean field model representing different types of banks. It is also appealing to examine strict minimum reserve requirements in the money supply control problem by setting the required reverses as dynamic floor constraints for all banks in a similar formulation of \cite{BoLiaoYu21}, for which we may consider some centralized singular control problems. Beyond the application in the interbank system, one may also modify the singular control problem in \cite{Yu21} as a centralized optimal dividend control by a sizable insurance group in models with $N$ and infinitely many subsidiaries when subsidiaries interact with others by default contagion and some fixed reinsurance rate. It will also be interesting to investigate the convergence error of the sequence of optimal (weak) controls (see, e.g. \cite{MottePham2021} that study the convergence error in the context of the mean-field Markov decision model).

\vspace{1cm}\noindent
{\small
\textbf{Acknowledgements}: The authors are grateful to two anonymous referees for their careful reading of the paper and constructive comments. L. Bo and T. Li are supported by Natural Science Foundation of China under grant no. 11971368 and National Center for Applied Mathematics in Shaanxi (NCAMS). X. Yu is supported by the Hong Kong Polytechnic University research grant under no. P0031417.}

\appendix

\section{Proofs of Auxiliary Results}\label{app:proof1}

\begin{proof}[Proof of Lemma \ref{lem:estimates}]
Let $C>0$ be a generic positive constant depending only on $K$ and $\Theta$, which may change from place to place. Recall the uniform boundedness of $a_i,\sigma_i$ and $\Theta$. For $\varrho>0$ in the assumption {\Asa}, we have that
\begin{equation}\label{eq:Xibound-p1}
\begin{aligned}
  |X_t^{\theta,i}|^{2+\varrho}&\leq C\Bigg\{ |X_0^i|^{2+\varrho}+|a_i|^{ 2+\varrho}\int_0^t\frac{1}{N}\sum_{j=1}^N |X^{\theta,j}_s|^{ 2+\varrho} ds+|a_i|^{ 2+\varrho}\int_0^t|X_s^{\theta,i}|^{2+\varrho}ds+|u_i|^{ 2+\varrho}\int_0^t|\theta_s|^{ 2+\varrho} ds\\
  &~~~~+|\sigma_i|^{ 2+\varrho} |W_t^i|^{ 2+\varrho}+|\sigma_0|^{ 2+\varrho} |W_t^0|^{ 2+\varrho}\Bigg\}\\
  &\leq C\left\{ 1+\int_0^t\frac{1}{N}\sum_{j=1}^N |X^{\theta,j}_s|^{ 2+\varrho} ds+\int_0^t|X_s^{\theta,i}|^{ 2+\varrho}ds+|W_t^i|^{ 2+\varrho}+|W_t^0|^{ 2+\varrho}\right\}.
\end{aligned}
\end{equation}
Summing the above inequality for $i$ from $1$ to $N$ and dividing by $N$ on both sides, we have that
\[\frac{1}{N}\sum_{i=1}^N|X_t^{\theta,i}|^{ 2+\varrho}\leq C\left\{ 1+\int_0^t\frac{2}{N}\sum_{j=1}^N |X^{\theta,j}_s|^{ 2+\varrho} ds+\frac{1}{N}\sum_{i=1}^N|W_t^i|^{ 2+\varrho}+|W_t^0|^{ 2+\varrho}\right\}.\]
Denoting $\overline{Z}_t^{\theta,N}:=\frac{1}{N}\sum_{i=1}^N|X_t^{\theta,i}|^{ 2+\varrho}$, we can deduce from the Doob's maximal inequality that
\[\begin{aligned}
  \Ex\left[\sup_{t\in[0,T]}\overline{Z}_t^{\theta,N}\right]&\leq C\left\{1 +\int_0^T\Ex\left[\sup_{s\in[0,t]}\overline{Z}_s^{\theta,N}\right]dt +\Ex\left[\sup_{t\in[0,T]}|W_t^0|^{ 2+\varrho}\right]\right\}\\
  &\leq C\left\{1 +\int_0^T\Ex\left[\sup_{s\in[0,t]}\overline{Z}_s^{\theta,N}\right]dt \right\}.
\end{aligned}\]
It follows from the Gronwall's inequality that
$\Ex\left[\sup_{t\in[0,T]}\overline{Z}_t^{\theta,N}\right]\leq Ce^{CT}<\infty$. Plugging the above estimate into \eqref{eq:Xibound-p1}, we obtain that
\[\begin{aligned}
  \Ex\left[\sup_{t\in[0,T]}|X_t^{\theta,i}|^{ 2+\varrho}\right]&\leq C\left\{1 +Ce^{CT}T+ \int_0^T\Ex\left[\sup_{s\in[0,t]}|X_s^{\theta,i}|^{ 2+\varrho}\right] +2\Ex\left[\sup_{t\in[0,T]}|W_t^0|^{ 2+\varrho}\right]\right\}\\
  &\leq C\left\{1+\int_0^T\Ex\left[\sup_{s\in[0,t]}|X_s^{\theta,i}|^{ 2+\varrho}\right] \right\}.
\end{aligned}\]
By Grownwall's inequality again, there exists a constant $D_1>0$, independent of $N,i$ and the choice of $\theta$, such that
\[\Ex\left[\sup_{t\in[0,T]}|X_t^{\theta,i}|^{ 2+\varrho}\right]\leq D_1<\infty.\]
Moreover, by the previous arguments and the BDG inequality, we have that
\begin{align*}
\Ex\left[ \sup_{|t-s|\leq\delta}\left|X^{\theta,i}_t-X^{\theta,i}_s\right|^{2} \right]
&\leq C\left\{D_1\delta+\Ex\left[ \sup_{|t-s|\leq\delta} |W_t^i-W_s^i|^{2}+\sup_{|t-s|\leq\delta}|W_t^0-W_s^0|^{2} \right]\right\}\\
&=C\left\{D_1\delta+2\Ex\left[ \sup_{|t-s|\leq\delta}\left|W_t^0-W_s^0\right|^{2} \right]\right\}.
\end{align*}
Note that $W^0=(W_t^0)_{t\in[0,T]}$ is a Brownian motion under $(\Omega,\F,\Px)$. Then, $\lim_{\delta\to0}\sup_{|t-s|\leq\delta}|W_t^0-W_s^0|^{2}=0$, $\Px$-a.s.. Moreover, by Doob's maximal inequality, we have $\Ex[\sup_{t\in[0,T]}|W_t^0|^2]\leq 4T$. Then, the second limit equality in \eqref{eq:x-bound} follows from the dominated convergence theorem. Thus, we complete the proof of the lemma.
\end{proof}

\begin{proof}[Proof of Lemma \ref{lem:wellpo-HJB}]
In view of the assumption $\prod_{i=1}^N\sigma_i\neq0$, the rank of $\Sigma$ is $N$. Moreover, the constant matrix $\Sigma\Sigma^\top$ is strictly positive, and thus there exists an invertible matrix $D$ such that $\Sigma\Sigma^\top=DD^\top$. As a result, for fixed $\mathbf{y}\in[-K,K]^N$, the parameterized Hamiltonian can be written by
\[\begin{aligned}\mathcal{H}(t,\mathbf{x},\mathbf{p},M;\mathbf{y}) &=\inf_{\theta\in\Theta}\left\{\mathbf{b}(\mathbf{x},\theta)^{\top}\mathbf{p} +\frac{1}{2}\mathrm{tr}(\Sigma\Sigma^\top M)+R_N(\mathbf{x},\mathbf{y};\theta)\right\}\\
&=\inf_{\theta\in\Theta}\left\{\mathbf{b}(\mathbf{x},\theta)^{\top}\mathbf{p} +\frac{1}{2}\mathrm{tr}(DD^\top M)+R_N(\mathbf{x},\mathbf{y};\theta)\right\}.\end{aligned}\]
We then have that
\begin{description}
  \item[(i)] The action space $\Theta\subset\R$ is compact.
  \item[(ii)] $\mathbf{b}(\mathbf{x},\theta)=A\mathbf{x}+D(D^{-1}\theta\mathbf{u}) =:\tilde{\mathbf{b}}(\mathbf{x})+D\kappa(\theta)$.
  \item[(iii)] $\tilde{\mathbf{b}}(\mathbf{x})\in C^2(\R^N)$ and $\nabla_{\mathbf{x}}\tilde{\mathbf{b}}$ is bounded; uniformly bounded $D$ and $\kappa$ are independent of $\mathbf{x}$;
  \item[(iv)] $R_N(\mathbf{x},\mathbf{y};\theta)$ and $L_N(\mathbf{x},\mathbf{y};\theta)$ are quadratic functions with respect to $\mathbf{x}$.
\end{description}
By Theorem VI.6.2 (Chapter VI) of \cite{FlemingRishel75}, the HJB equation \eqref{eq:HJB} has a unique classical solution with the parameter $\mathbf{y}\in\R^N$ satisfying the quadratic growth condition.
\end{proof}

\begin{proof}[Proof of Lemma \ref{lem:ex-op-strict}]
We first show that $\Ex[V(0,\mathbf{X}_0;\mathbf{Y})]\leq J_N(\theta)$, for any $\theta\in\mathbb{U}^{\Px,\Fx}$. Recall that $\mathbf{X}^{\theta}$ is the solution to \eqref{eq:finite-x} associated with $\theta$ for fixed $N$ on the original probability space $(\Omega,\F,\Px)$. For $\mathbf{y}\in \R^N$, let $V(t,\mathbf{x};\mathbf{y})$ be the unique classical solution of Eq.~\eqref{eq:HJB} and
\[\tau_m:=\inf\left\{t\in[0,T]:\int_0^t|\nabla_{\mathbf{x}}V(t,\mathbf{X}^{\theta}_s;\mathbf{Y})^\top\Sigma|^2ds >m\right\},\quad m>0,\]
with the convention that $\inf\varnothing=T$. It\^{o}'s formula gives that
\[\begin{aligned}
&V(\tau_m,\mathbf{X}^{\theta}_{\tau_m};\mathbf{Y})
=V(0,\mathbf{X}_0;\mathbf{Y})+\int_0^{\tau_m}\nabla_{\mathbf{x}}V(t,\mathbf{X}^{\theta}_t;\mathbf{Y})^\top\Sigma d\mathbf{W}_t^0\\
&\qquad\quad+\int_0^{\tau_m}\left\{\frac{\partial V}{\partial t}(t,\mathbf{X}^{\theta}_t;\mathbf{Y})+\mathbf{b}(\mathbf{X}^{\theta}_t,\theta_t)^\top \nabla_{\mathbf{x}}V(t,\mathbf{X}^{\theta}_t;\mathbf{Y})+\frac{1}{2}\mathrm{tr}(\Sigma\Sigma^\top \nabla_{\mathbf{x}}^2V(t,\mathbf{X}^{\theta}_t;\mathbf{Y}))\right\}dt.
\end{aligned}\]
Taking the expectation on both sides, we deduce from \eqref{eq:HJB} that
\[\begin{aligned}
\Ex[V(\tau_m,\mathbf{X}^{\theta}_{\tau_m};\mathbf{Y})]
&\geq\Ex[V(0,\mathbf{X}_0;\mathbf{Y})] -\Ex\left[\int_0^{\tau_m}R_N(\mathbf{X}^\theta_t,\mathbf{Y};\theta_t)dt\right]\\
&\geq\Ex[V(0,\mathbf{X}_0;\mathbf{Y})] -\Ex\left[\int_0^TR_N(\mathbf{X}^\theta_t,\mathbf{Y};\theta_t)dt\right],
\end{aligned}\]
where the last inequality holds as $R_N(\mathbf{x},\mathbf{y};\theta)$ defined in \eqref{eq:LNRN} is nonnegative. In view of the quadratic growth condition satisfied by the value function $V$ in Lemma~\ref{lem:wellpo-HJB}, we have that, $\Px$-a.s.,
\begin{align}\label{eq:boundVtaum00}
V(\tau_m,\mathbf{X}^{\theta}_{\tau_m};\mathbf{Y})\leq D_2\left(1+\sup_{t\in[0,T]}|\mathbf{X}_t^{\theta}|^2\right),\quad \forall~m>0,
\end{align}
where the constant $D_2>0$ which is independent of $m$ is given in Lemma~\ref{lem:wellpo-HJB}.
Moreover, it follows from Lemma \ref{lem:estimates} that $\Ex[\sup_{t\in[0,T]}|\mathbf{X}_t^{\theta}|^2]<+\infty$. Thus, in light of DCT, it holds that
\begin{align*}
\Ex[L_N(\mathbf{X}^{\theta}_T,\mathbf{Y})]&=\Ex[V(T,\mathbf{X}^{\theta}_T;\mathbf{Y})] =\lim_{m\to\infty}\Ex[V(\tau_m,\mathbf{X}^{\theta}_{\tau_m};\mathbf{Y})]\\
&\geq\Ex[V(0,\mathbf{X}_0;\mathbf{Y})] -\Ex\left[\int_0^TR_N(\mathbf{X}^\theta_t,\mathbf{Y};\theta_t)dt\right].
\end{align*}
This yields the inequality that
\begin{equation}\label{eq:V-leq}
  \Ex[V(0,\mathbf{X}_0;\mathbf{Y})]\leq J_N(\theta),\quad \forall~\theta\in\mathbb{U}^{\Px,\Fx}.
\end{equation}

On the other hand, it follows from \eqref{eq:optimalMar} and Lemma~\ref{lem:wellpo-HJB} that SDE \eqref{eq:Op-SDE} admits a unique strong solution (see Theorem 2.3.4 (Chapter 2) in \cite{Mao08}). Next, we show that $\Ex[V(0,\mathbf{X}_0;\mathbf{Y})]=J_N(\theta^*)$. Indeed, defining
\[\tau_m^*:=\inf\left\{t\in[0,T]:\int_0^t|\nabla_{\mathbf{x}}V(t,\mathbf{X}^*_s;\mathbf{Y})^\top\Sigma|^2ds >m\right\},\quad m>0,\]
we can deduce from \eqref{eq:optimalMar} that
\begin{align}\label{eq:taumstar001100}
&\Ex[V(\tau_m^*,\mathbf{X}^*_{\tau_m^*};\mathbf{Y})]=\Ex[V(0,\mathbf{X}_0;\mathbf{Y})]\nonumber\\
&\qquad\quad+\Ex\left[\int_0^{\tau_m^*}\bigg\{\frac{\partial V}{\partial t}(t,\mathbf{X}^*_t;\mathbf{Y})+\mathbf{b}(\mathbf{X}^*_t,\theta_t)^\top \nabla_{x}V(t,\mathbf{X}^*_t;\mathbf{Y})+\frac{1}{2}\mathrm{tr}(\Sigma\Sigma^\top \nabla_{x}^2V(t,\mathbf{X}^*_t;\mathbf{Y}))\bigg\}dt\right]\nonumber\\
&\quad=\Ex[V(0,\mathbf{X}_0;\mathbf{Y})] -\Ex\left[\int_0^{\tau_m^*}R_N(\mathbf{X}^*_t,\mathbf{Y};\theta_t^*)dt\right].
\end{align}
Similarly to \eqref{eq:boundVtaum00}, it follows from DCT that $\Ex[V(T,\mathbf{X}^*_{T};\mathbf{Y})]=\lim_{m\to\infty}\Ex[V(\tau_m^*,\mathbf{X}^*_{\tau_m^*};\mathbf{Y})]$. On the other hand, as $R_N(\mathbf{x},\mathbf{y};\theta)$ is nonnegative and satisfies the quadratic growth condition, it follows from Lemma \ref{lem:estimates} that, there exist constants $C_1,C_2>0$ independent of $m$ such that
\begin{align*}
\Ex\left[\int_0^{\tau_m^*}R_N(\mathbf{X}^*_t,\mathbf{Y};\theta_t^*)dt\right]\leq  C_1\Ex\left[1+\sup_{t\in[0,T]}|\mathbf{X}_t^{*}|^2\right]\leq C_2<+\infty.
\end{align*}
We also note that $R_N(\mathbf{x},\mathbf{y};\theta)$ is nonnegative and $m\to\tau_m^*$ is increasing. Then, the monotone convergence theorem (MCT) yields that
\begin{align*}
\Ex\left[\int_0^{T}R_N(\mathbf{X}^*_t,\mathbf{Y};\theta_t^*)dt\right]=\lim_{m\to\infty}\Ex\left[\int_0^{\tau_m^*}R_N(\mathbf{X}^*_t,\mathbf{Y};\theta_t^*)dt\right].
\end{align*}
Thus, the equality \eqref{eq:taumstar001100} gives that
\begin{align*}
\Ex[V(T,\mathbf{X}^*_T;\mathbf{Y})]=\Ex[V(0,\mathbf{X}_0;\mathbf{Y})] -\Ex\left[\int_0^TR_N(\mathbf{X}^*_t,\mathbf{Y};\theta_t^*)dt\right],
\end{align*}
which verifies that $\Ex[V(0,\mathbf{X}_0;\mathbf{Y})]=J_N(\theta^*)$. Combining with \eqref{eq:V-leq}, we have that
\begin{equation}\label{eq:strict-control}
  J_N(\theta^*)=\Ex[V(0,\mathbf{X}_0;\mathbf{Y})]=\inf_{\theta\in\mathbb{U}^{\Px,\Fx}}J_N(\theta).
\end{equation}
Thus, we complete the proof of the lemma.
\end{proof}

\begin{proof}[Proof of Lemma \ref{lem:conv-1}]
Similar to the arguments in \cite{Bo-C-L20}, we first check the tightness of $\{\Qx_\mu^N\}_{N=1}^\infty$. To this end, we show that for any $\varepsilon>0$, there exists a relatively compact set $\mathcal{M}\subset S=C([0,T];\mathcal{P}_2(E))$, such that
$\sup_{N\geq1} \Qx_\mu^N(\mathcal{M}^c)<\varepsilon$. Recall that a subset $\mathcal{M}\subset S$ is relatively compact iff the following two conditions hold: (i) the compact containment, i.e., for any $\rho\in S$ and $t\in[0,T]$, $\rho_t$ is contained in a relatively compact of $\mathcal{P}_2(E)$; (ii) the equicontinuous property, namely
\[\lim_{\delta\to0}\sup_{\rho\in S}\sup_{\substack{|t-s|\leq\delta,\\ 0\leq s,t\leq T}}\mathcal{W}_{E,2}(\rho_t,\rho_s)=0,\]
where $\mathcal{W}_{E,2}$ denotes the quadratic Wasserstein metric on $\mathcal{P}_2(E)$.

For any $M,\delta,\varepsilon>0$, we define
\[\mathcal{M}_1(M):=\left\{ \rho\in S;\sup_{t\in[0,T]}\int_{E} |e|^{ 2+\varrho}\rho_t(de)\leq M \right\},\quad\mathcal{M}_2(\delta,\varepsilon):=\left\{ \rho\in S;\sup_{|t-s|\leq\delta}\mathcal{W}_{E,2}(\rho_t,\rho_s)\leq \varepsilon \right\}.\]
We claim that $\mathcal{M}_1(M)\subset\mathcal{P}_2(E)$ and satisfies (i). Indeed, for any $\rho\in\mathcal{M}_1(M)$, it follows from Chebyshev's inequality that
\[\rho_t(B^c(R))\leq \frac{1}{R^{ 2+\varrho}}\sup_{t\in[0,T]}\int_E |e|^{2+\varrho}\rho_t(de)\leq\frac{M}{R^{ 2+\varrho}}\to0,\quad R\to\infty,\]
where $B(R)$ is the centered closed ball with radius $R$, i.e., $B(R)=\{e;|e|\leq R\}$. In addition, we get that
\[\sup_{\rho\in\mathcal{M}_1(M)}\int_{\{|e|^2\geq R\}}|e|^2\rho_t(de) \leq\frac{1}{R^{ \varrho/2}}\sup_{\rho\in\mathcal{M}_1(M)}\int_E |e|^{2+\varrho}\rho_t(de)\leq \frac{M}{R^{\varrho/2}}\to0,\quad R\to\infty.\]
Therefore, by Theorem 7.12 of \cite{Villani03}, $\mathcal{M}_1(M)$ is indeed relatively compact in $\mathcal{P}_2(E)$. In virtue of \eqref{eq:x-bound}, we have that
\begin{align*}
  \sup_{N\geq 1}\Qx_\mu^N(\mathcal{M}_1^c(M))
  =&\sup_{N\geq 1}Q_N(\mu^N\in\mathcal{M}_1^c(M))
  \leq\frac{1}{M}\sup_{N\geq 1}\Ex^{Q_N}\left[\sup_{t\in[0,T]}\int_{E} |e|^{ 2+\varrho}\,\mu^N_t(de)\right]\\
  \leq&\frac{1}{M}\sup_{N\geq 1}\frac{1}{N}\sum_{i=1}^N \Ex^{Q_N}\left[\sup_{t\in[0,T]}\left|\widehat{X}^{N,i}_t\right|^{ 2+\varrho}\right]
  \leq\frac{D_1}{M}\to0,\quad M\to\infty.
\end{align*}
That is, for any $\hat{\varepsilon}>0$, there exists $M_0>0$ such that $\sup_{N\geq 1}\Qx_\mu^N(\mathcal{M}_1^c(M_0))<\hat{\varepsilon}/2$. Moreover, by the compactness of $\Theta$ and the Doob's maximum inequality, we can deduce from \eqref{eq:x-bound} that
\begin{align*}
  \sup_{N\geq 1}\Qx_\mu^N(\mathcal{M}_2^c(\delta,\varepsilon))
  =&\sup_{N\geq 1}Q_N(\mu^N\in \mathcal{M}_2^c(\delta,\varepsilon))
  \leq\sup_{N\geq 1}\frac{1}{\varepsilon^2}\Ex^{Q_N}\left[ \sup_{|t-s|\leq\delta} \mathcal{W}_{E,2}^2(\mu^N_t,\mu^N_s) \right]\\
  \leq&\frac{1}{\varepsilon^2}\Ex^{Q_N}\left[ \sup_{|t-s|\leq\delta} \frac{1}{N}\sum_{i=1}^N|\widehat{X}^{N,i}_t-\widehat{X}^{N,i}_s|^2 \right]
  \to0,\quad\textrm{as }\delta\to 0.
\end{align*}
Therefore, for the same $\hat{\varepsilon}$ above, there exists a sequence $\{\delta_k\}_{k\geq 1}$ satisfying $\lim_{k\to\infty}\delta_k=0$ such that
\[\sup_{N\geq1}\Qx_\mu^N(\mathcal{M}_2^c(\delta_k,k^{-1}))<\frac{\hat{\varepsilon}}{2^k}.\]
Define $\mathcal{M}:=\mathcal{M}_1(M)\bigcap(\cap_{k\geq1}\mathcal{M}_2(\delta_k,k^{-1}))$, which is a relatively compact subset of $S$, and we have
$\sup_{N\geq1}\Qx_\mu^N(\mathcal{M}^c)<\hat{\varepsilon}$. It follows that $\{\Qx_\mu^N\}_{N\geq1}$ is tight. Furthermore, it can be deduced from the Prokhorov's theorem that $\{\Qx_\mu^N\}_{N\geq 1}$ is relatively compact in $\mathcal{P}(S)$.

To show $\{\Qx_\mu^N\}_{N\geq1}$ is relatively compact in $\mathcal{P}_2(S)$, we need to verify that $\{\Qx_\mu^N\}_{N\geq1}\subset\mathcal{P}_2(S)$. Recall the metric $d_S$ defined by \eqref{eq:metric-S} and define $\delta_0\in{\cal P}_2(E)$ as the constant process $\hat{\rho}=(\hat{\rho}_t)_{t\in[0,T]}\equiv\delta_0\in S$. Thanks to the assumption {\Asa} and \eqref{eq:x-bound}, we get that
\begin{align*}
  &\int_S d_S^2(\rho,\hat{\rho})\Qx_\mu^N(d\rho)=\Ex^{Q_N} \left[\sup_{t\in[0,T]}\mathcal{W}_{E,2}^2(\mu_t^N,\hat{\rho})\right]
  \leq\Ex^{Q_N}\left[\sup_{t\in[0,T]}\int_{E}|e|^2\mu_t^N(de)\right]\\
  &\qquad\leq\frac{1}{N}\sum_{i=1}^N\left\{\Ex^{Q_N}\left[|\xi_i|^2\right] +\Ex^{Q_N}\left[\sup_{t\in[0,T]}|\widehat{X}^{N,i}_t|^2\right]\right\}<\infty.
\end{align*}
Next, by Theorem 7.12 of \cite{Villani03}, it boils down to show the uniform integrability of $\{\Qx_\mu^N\}_{N\geq 1}$ that
\begin{equation}\label{eq:condition-UI}
  \lim_{R\to\infty}\sup_{N\geq1}\int_{\{\rho\in S:~d_S^2(\rho,\hat{\rho})\geq R)\}}d_S^2(\rho,\hat{\rho})\Qx_\mu^N(d\rho)=0.
\end{equation}
Actually, by \eqref{eq:x-bound}, it holds that
\[\sup_{N\geq1}\int_Sd_S^{2+\varrho}(\rho,\hat{\rho})\Qx_\mu^N(d\rho)\leq\sup_{N\geq1}\frac{1}{N} \sum_{i=1}^N\Ex^{Q_N}\left[\sup_{t\in[0,T]}|\widehat{X}^{N,i}_t|^{2+\varrho}\right]<\infty,\]
which yields the desired result \eqref{eq:condition-UI}, and the proof is complete.
\end{proof}

\begin{proof}[Proof of Lemma \ref{lem:linear-wellpo}]
Let us give some notations first. Let $\mathcal{M}(E)$ be the subset of all finite signed Borel measures on $E=\mathcal{O}\times\R^2$, whose first three marginal distributions equal to $\tilde{\mu}_0$ defined in Theorem \ref{thm:convergence-empirical}. That is, if $\vartheta\in\mathcal{M}(E)$, then $\vartheta(A_1\times A_2\times A_3\times A_4\times\R)=\tilde{\mu}_0(A_1\times A_2\times A_3\times A_4\times\R)$, $\forall A_1,A_2,A_3,A_4\in\mathcal{B}(\R)$. In order to show the uniqueness of the solution to \eqref{eq:linear-equation}, we adopt and modify some arguments in \cite{Kotelenez95} and \cite{KurtzXiong99}, in which the key idea is to transform the $\mathcal{M}(E)$-valued process to an $L^2(\R)$-valued process. To this end, for any measure $\vartheta\in\mathcal{M}(E)$ and the function $\psi\in L^2(\R)$, we respectively define
\begin{align*}
T_\delta \vartheta(x):=\int_E G_\delta(x-z)\vartheta(da,du,d\sigma,dy,dz),\quad T_\delta \psi(x):=\int_\R G_\delta(x-z)\psi(z)dz,\quad \delta>0,
\end{align*}
where $G_\delta(x):=\frac{1}{\sqrt{2\pi \delta}}e^{-\frac{x^2}{2\delta}}$ is the heat kernel. With a bit abuse of notations, we also denote
\[T_\delta (\sigma^2\vartheta)(x):=\int_{E} G_\delta(x-z)\sigma^2\vartheta(da,du,d\sigma,dy,dz).\]
Recall the definition of $\mathcal{A}^{m,\theta}$ in \eqref{eq:generator}, it follows from \eqref{eq:linear-equation} that
\begin{align*}
&\quad\langle T_\delta\vartheta_t,\psi \rangle_{L^2}=\langle \vartheta_t,T_\delta\psi \rangle\\
&=\langle \tilde{\mu}_0,T_\delta\psi \rangle+\int_0^t \langle \vartheta_s,g^{\nu_s,\tilde{\theta}}(\cdot)(T_\delta\psi)' \rangle ds+\int_0^t\left\langle \vartheta_s,\frac{\sigma^2+\sigma^2_0}{2}(T_\delta\psi)'' \right\rangle ds+\sigma_0\int_0^t \langle \vartheta_s,(T_\delta\psi)' \rangle d\widetilde{W}_s^0\\
&=\langle T_\delta\tilde{\mu}_0,\psi \rangle_{L^2}-\int_0^t \langle [T_\delta(g^{\nu_s,\tilde{\theta}}\vartheta_s)]',\psi \rangle_{L^2} ds+\int_0^t\left\langle \left[T_\delta\left(\frac{\sigma^2+\sigma_0^2}{2}\vartheta_s\right)\right]'',\psi \right\rangle_{L^2} ds
\\
&~~~~-\sigma_0\int_0^t \langle (T_\delta\vartheta_s)',\psi \rangle_{L^2} d\widetilde{W}_s^0,
\end{align*}
where $\langle f,g \rangle_{L^2}$ denotes the inner product in $L^2(\R)$, i.e., $\langle f,g \rangle_{L^2}=\int_\R f(x)g(x) dx$, and $\|f\|_{L^2}=\langle f,f \rangle_{L^2}^\frac{1}{2}$. It follows from It\^{o}'s lemma that
\begin{align*}
&~~~~\langle T_\delta\vartheta_t,\psi \rangle_{L^2}^2\\
&=\langle T_\delta\tilde{\mu}_0,\psi \rangle_{L^2}^2-2\int_0^t \langle T_\delta\vartheta_s,\psi \rangle_{L^2}\langle [T_\delta(g^{\nu_s,\tilde{\theta}}\vartheta_s)]',\psi \rangle_{L^2} ds+\int_0^t\langle T_\delta\vartheta_s,\psi \rangle_{L^2}\langle (T_\delta[(\sigma^2+\sigma_0^2)\vartheta_s])'',\psi \rangle_{L^2} ds\\
&\quad+\sigma_0^2 \int_0^t \langle (T_\delta\vartheta_s)',\psi \rangle_{L^2}^2 ds-2\sigma_0\int_0^t\langle T_\delta\vartheta_s,\psi \rangle_{L^2} \langle (T_\delta\vartheta_s)',\psi \rangle_{L^2} d\widetilde{W}^0_s.
\end{align*}
Summing over $\psi$ in a complete, orthonormal basis of $L^2(\R)$ and taking expectations on both sides, we have that
\begin{align*}
\widetilde{\Ex}\left[ \|T_\delta\vartheta_t\|_{L^2}^2\right]
&=\widetilde{\Ex}\left[ \|T_\delta\tilde{\mu}_0\|_{L^2}^2\right]-2\widetilde{\Ex}\left[\int_0^t \langle T_\delta\vartheta_s, [T_\delta(g^{\nu_s,\tilde{\theta}}\vartheta_s)]'\rangle_{L^2} ds\right]\\
&\quad+\widetilde{\Ex}\left[\int_0^t\langle T_\delta\vartheta_s,(T_\delta[(\sigma^2+\sigma_0^2)\vartheta_s])'' \rangle_{L^2} ds\right]
+\sigma_0^2\widetilde{\Ex}\left[ \int_0^t \| (T_\delta\vartheta_s)' \|_{L^2}^2 ds\right].
\end{align*}
Let $|\vartheta|$ be the total variation measure of $\vartheta\in\mathcal{M}(E)$. Similarly to the proof of Theorem 3.2 in \cite{KurtzXiong99}, it can be shown that there exists a constant $C>0$ such that
\begin{equation}\label{eq:estimate}
\widetilde{\Ex}\left[ \|T_\delta\vartheta_t\|_{L^2}^2\right]
\leq\widetilde{\Ex}\left[ \|T_\delta\tilde{\mu}_0\|_{L^2}^2\right]+C\int_0^t \widetilde{\Ex}\left[ \|T_\delta(|\vartheta_s|)\|_{L^2}^2 \right]ds.
\end{equation}
Note that if $\vartheta_t\in\mathcal{P}(E)$ and $(\vartheta_t)_{t\in[0,T]}$ satisfies \eqref{eq:linear-equation}, then $T_\delta(|\vartheta_t|)=T_\delta\vartheta_t$ for any $t\in[0,T]$.
Thus, by Grownwall's inequality, we have that
\begin{align}\label{eq:ineTdeltatheta00}
\widetilde{\Ex}\left[ \|T_\delta\vartheta_t\|_{L^2}^2\right]
\leq\widetilde{\Ex}\left[ \|T_\delta\tilde{\mu}_0\|_{L^2}^2\right]e^{CT}.
\end{align}

For any $\vartheta\in\mathcal{M}(E)$, let us use $\|\vartheta\|_{L^2}$ to denote its $L^2$-norm and say $\vartheta\in L^2$ if $\|\vartheta\|_{L^2}<\infty$. Recall that $\vartheta_0=\tilde{\mu}_0$ has an $L^2$-density with respect to Lebesgue measure. Then, for any complete and orthonormal basis $(\psi_j)_{j\geq1}$, we can deduce from the Fatou's lemma and \eqref{eq:ineTdeltatheta00} that any $\mathcal{P}(E)$-valued solution to \eqref{eq:linear-equation} satisfies that
\begin{align*}
\widetilde{\Ex}\left[\sum_{j=1}^\infty\langle\psi_j,\vartheta_t\rangle\right]
&=\widetilde{\Ex}\left[\sum_{j=1}^\infty\lim_{\delta\to0}\langle \psi_j,T_\delta\vartheta_t\rangle\right]
\leq \varliminf_{\delta\to0}\widetilde{\Ex}\left[\sum_{j=1}^\infty\langle \psi_j,T_\delta\vartheta_t\rangle\right]\\
&=\varliminf_{\delta\to0}\widetilde{\Ex}\left[\|T_\delta\vartheta_t\|_{L^2}\right]\leq\varliminf_{\delta\to0}\widetilde{\Ex}\left[ \|T_\delta\vartheta_0\|_{L^2}^2\right]e^{CT}.
\end{align*}
Note that $\|T_{\delta}\vartheta_0\|_{L^2}\leq\|\vartheta_0\|_{L^2}$ by applying Lemma 3.1 in \cite{KurtzXiong99}. It yields that, for all $t\in[0,T]$,
\begin{align*}
\widetilde{\Ex}\left[\sum_{j=1}^\infty\langle\psi_j,\vartheta_t\rangle\right]\leq\varliminf_{\delta\to0}\widetilde{\Ex}\left[ \|T_\delta\vartheta_0\|_{L^2}^2\right]e^{CT}\leq \|\vartheta_0\|_{L^2}^2e^{CT}<+\infty,
\end{align*}
i.e., $\vartheta_t\in L^2$ for any $t\in[0,T]$.

Let $\vartheta^1$ and $\vartheta^2$ be two $\mathcal{P}(E)$-valued solutions of \eqref{eq:linear-equation} with the same initial value $\tilde{\mu}_0$. Denote $\vartheta_t:=\vartheta^1_t-\vartheta^2_t$. It holds that
\[\widetilde{\Ex}\left[ \|T_\delta\vartheta_t\|_{L^2}^2\right]\leq C\int_0^t \widetilde{\Ex}\left[ \|T_\delta(|\vartheta_s|)\|_{L^2}^2 \right]ds.\]
Then, for a complete and orthonormal basis $(\psi_j)_{j\geq1}$, we can derive from Fatou's lemma again that
\begin{align*}
\widetilde{\Ex}\left[ \|\vartheta_t\|_{L^2}^2\right]
&=\widetilde{\Ex}\left[ \sum_j \langle \psi_j,\vartheta_t\rangle^2\right]
=\widetilde{\Ex}\left[ \sum_j \lim_{\delta\to 0}\langle T_\delta\psi_j,\vartheta_t\rangle^2\right]
=\widetilde{\Ex}\left[ \sum_j \lim_{\delta\to 0}\langle T_\delta\vartheta_t,\psi_j \rangle_{L^2}^2\right]\\
&\leq\varliminf_{\delta\to 0}\widetilde{\Ex}\left[ \|T_\delta\vartheta_t\|_{L^2}^2\right]
\leq \varliminf_{\delta\to 0}C\int_0^t \widetilde{\Ex}\left[ \|T_\delta(|\vartheta_s|)\|_{L^2}^2 \right]ds\\
&\leq C\int_0^t \widetilde{\Ex}\left[ \|\,|\vartheta_s|\,\|_{L^2}^2 \right]ds
=C\int_0^t \widetilde{\Ex}\left[ \|\vartheta_s\|_{L^2}^2 \right]ds,
\end{align*}
where the constant $C>0$ is the same to the one in \eqref{eq:estimate}. Accordingly, it follows from the Gronwall's inequality that \eqref{eq:linear-equation} has a unique solution.
\end{proof}

\begin{proof}[Proof of Lemma \ref{lem:ob-continuous}]
For $\theta^{1},~\theta^{2}\in\mathbb{U}^{\Px,\Fx}$, denote by $\mathbf{X}^{1},~\mathbf{X}^{2}$ the associated solutions to \eqref{eq:finite-x} respectively. Let $C>0$ be a generic positive constant depending only on $K$ and $\Theta$ that may change from place to place. It holds that
\begin{align*}
\Ex\left[\sup_{s\in[0,t]}|\mathbf{X}^{1}_s-\mathbf{X}^{2}_s|^2\right]
&\leq C\left(\|\theta^1-\theta^2\|_{\Hx^2}+ \Ex\left[\int_0^t\sup_{u\in[0,s]}|\mathbf{X}^{1}_u-\mathbf{X}^{2}_u|^2ds\right]\right)\leq C\|\theta^1-\theta^2\|_{\Hx^2},
\end{align*}
where the last inequality is due to the Grownwall's inequality. Recalling the definition of $J_N$ given in \eqref{eq:objectfunc}, we can deduce from Lemma \ref{lem:estimates} that
\begin{align*}
|J_N(\theta^1)-J_N(\theta^2)|
\leq C\left(\|\theta^1-\theta^2\|_{\Hx^2}+\Ex\left[\int_0^T |\mathbf{X}^{1}_t-\mathbf{X}^{2}_t|^2dt+|\mathbf{X}^{1}_T-\mathbf{X}^{2}_T|^2\right]\right)
\leq C\|\theta^1-\theta^2\|_{\Hx^2}.
\end{align*}
This shows that the objective functional $J_N$ is continuous on $\mathbb{U}^{\Px,\Fx}$ with respect to $\Hx^2$ norm.
\end{proof}

\begin{proof}[Proof of Lemma~\ref{lem:Ekeland}]
We first transform the constrained optimization problem into an unconstrained one.  Let $\idc_{\mathbb{U}^{\Px,\Fx}}(\theta):\mathbb{H}^2\to [0,+\infty]$ be the indicator function of $\mathbb{U}^{\Px,\Fx}$, namely
\begin{equation*}
  \idc_{\mathbb{U}^{\Px,\Fx}}(\theta):=
  \left\{\begin{array}{cl}
    0,&\text{if}\ \theta\in\mathbb{U}^{\Px,\Fx};\\
    +\infty,&\text{otherwise}.
  \end{array}\right.
\end{equation*}
Define $\widehat{J}_{N,k}:\mathbb{H}^2\to [0,+\infty]$ by
\begin{equation}\label{eq:J-N-hat}
  \widehat{J}_{N,k}(\theta) := J_N(\theta) + \frac{1}{k}\|\theta^k-\theta\|_{\mathbb{H}^2}+ \idc_{\mathbb{U}^{\Px,\Fx}}(\theta).
\end{equation}
Then, for each $k\geq 1$, $\theta^k$ is the global minimizer of $\widehat{J}_{N,k}$, hence it is a substationary point of $\widehat{J}_{N,k}(\theta)$. Accordingly, $\partial\hat{J}_{N,k}(\theta^k)$, the subdifferential of $\widehat{J}_{N,k}(\theta)$ at $\theta^k$, includes zero element, i.e. $0\in \partial\hat{J}_{N,k}(\theta^k)$. Note that this condition is crucial to formulate $\theta^k$. For this purpose, we first claim that
\begin{equation}\label{eq:substationary}
  \partial\widehat{J}_{N,k}(\theta)\subset \eta^\theta+\mathcal{N}_{\mathbb{U}^{\Px,\Fx}}(\theta)+ \frac{1}{k}\partial\|\theta^k-\theta\|_{\Hx^2},
\end{equation}
where $\eta^\theta$ is defined as \eqref{eq:eta} and $\mathcal{N}_{\mathbb{U}^{\Px,\Fx}}(\theta)$ is the normal cone to $\mathbb{U}^{\Px,\Fx}$ at $\theta$, i.e.
\begin{equation}\label{eq:normal-cone-H}
  \mathcal{N}_{\mathbb{U}^{\Px,\Fx}}(\theta)=\{y\in\Hx^2:\langle y,\hat{\theta}-\theta\rangle\leq0,\ \forall~\hat{\theta}\in\mathbb{U}^{\Px,\Fx}\}.
\end{equation}

Define $\widetilde{J}_N(\theta):=J_N(\theta)+\idc_{\mathbb{U}^{\Px,\Fx}}(\theta)$. For $\theta\in \mathbb{U}^{\Px,\Fx}$, we have
\begin{align*}
  \partial\widetilde{J}_N(\theta) = \{z\in\Hx^2: \widetilde{J}_N^{\uparrow}(\theta;y)\geq\langle y,z\rangle, \ \forall~ y\in\Hx^2\},
\end{align*}
where  $\widetilde{J}_N^{\uparrow}(\theta,y)$ is the upper subderivative at $\theta$ with respect to $y$:
{\small
\begin{align*}
  \widetilde{J}_N^{\uparrow}(\theta,y) = \sup_{V\subset\mathcal{N}(y)}\left[\limsup_{\substack {\theta'\to\theta,\alpha'\to\widetilde{J}_N(\theta)\\ \alpha'\geq \widetilde{J}_N(\theta'),t\to 0 }} \inf_{y' \in V}\bigg( \frac{J_N(\theta' + ty')-\alpha' + \idc_{\mathbb{U}^{\Px,\Fx}}(\theta' + ty')}{t}\bigg) \right],
\end{align*}
}
and $\mathcal{N}(y)$ is the set of all neighborhood of $y$. It follows that
\begin{align*}
  \widetilde{J}_N^{\uparrow}(\theta;y) = \lim_{t\rightarrow 0}\frac{J(\theta+ty)-J(\theta)}{t} + \idc^{'}_{\mathbb{U}^{\Px,\Fx}}(\theta,y),
\end{align*}
where
\begin{align*}
  \idc^{'}_{\mathbb{U}^{\Px,\Fx}}(\theta,y)=\left\{
  \begin{array}{cl}
    0,&\textrm{if}\ y\in T_{\mathbb{U}^{\Px,\Fx}}(\theta);\\
    \infty,&\textrm{if}\ y\notin T_{\mathbb{U}^{\Px,\Fx}}(\theta),
  \end{array}\right.
\end{align*}
and $T_{\mathbb{U}^{\Px,\Fx}}(\theta)$ is the (Clarke) tangent cone to $\mathbb{U}^{\Px,\Fx}$ at $\theta$ (see \cite{Rockafellar79}). Recall the definition of $\partial\widetilde{J}_N(\theta)$ and Lemma \ref{lem:G-derivative}, for any $z\in \partial\widetilde{J}_N(\theta)$, we have $\langle \eta^\theta,y \rangle\geq\langle y,z \rangle,$ for $y=\hat{\theta}-\theta$, $\hat{\theta}\in\mathbb{U}^{\Px,\Fx}$. It follows from \eqref{eq:normal-cone-H} that $z\in\eta^\theta+\mathcal{N}_{\mathbb{U}^{\Px,\Fx}}(\theta)$, i.e. $\partial\widetilde{J}_N(\theta)\subset\eta^\theta+\mathcal{N}_{\mathbb{U}^{\Px,\Fx}}(\theta)$. Moreover, thanks to Theorem 2 of \cite{Rockafellar79}, we have that
\begin{align*}
  \partial\widehat{J}_{N,k}(\theta)=\partial\left(\widetilde{J}_N(\theta) +\frac{1}{k}\|\theta_k-\theta\|_{\Hx^2}\right)\subset
  \partial\widetilde{J}_N(\theta)+\frac{1}{k}\partial(\|\theta^k-\theta\|_{\Hx^2}),
\end{align*}
which verifies the claim \eqref{eq:substationary}.

In view that $0\in \partial\widehat{J}_{N,k}(\theta^k)$, we have
\begin{align*}
  0\in\eta^k+\mathcal{N}_{\mathbb{U}^{\Px,\Fx}}(\theta^k) +\frac{1}{k}\partial(\|\theta^k-\theta\|_{\Hx^2})|_{\theta=\theta^k},
\end{align*}
where we have used $\eta^k$ instead of $\eta^{\theta^k}$ for short. It follows that there exists $\varphi^k\in\mathcal{N}_{\mathbb{U}^{\Px,\Fx}}(\theta^k)$ and $\|\chi^k\|_{\Hx^2}\leq 1$ that
\begin{equation}\label{eq:pre-projection}
  \eta^k+\varphi^k+\frac{1}{k}\chi^k=0,\qquad \forall~ k\in\N.
\end{equation}

To characterize $\varphi^k$ more precisely, we claim that
\begin{equation}\label{eq:normal-cone-R}
  \mathcal{N}_{\mathbb{U}^{\Px,\Fx}}(\theta^k)=\{y\in\Hx^2:y_t\in N_\Theta(\theta^k_t),\ dt\otimes d\Px\ \textrm{a.e.}\},
\end{equation}
where $N_\Theta(\theta^k_t)$ is the normal cone to $\Theta$ at $\theta^k_t$. It is straightforward to see that the right-hand side of \eqref{eq:normal-cone-R} belongs to $\mathcal{N}_{\mathbb{U}^{\Px,\Fx}}(\theta^k)$ because $y_t\in N_\Theta(\theta^k_t)$ implies that $y_t(z-\theta^k_t)\leq 0$, for $\forall z\in\Theta$ a.e. on $(0,T)\times\Omega$. On the other hand, for any $y\in \mathcal{N}_{\mathbb{U}^{\Px,\Fx}}(\theta^k)$, $z_t=\Pi_\Theta(\theta^k_t+y_t)$ is well-defined by the compactness and convexity of $\Theta$, which yields that there exists $w^z_t\in N_\Theta(z_t)$ such that $z_t+w^z_t=\theta^k_t+y_t$. Note that $z=(z_t)_{t\in[0,T]}$ defined above is an element of ${\mathbb{U}^{\Px,\Fx}}$, and consequently
\begin{align*}
  0\geq&\langle y,z-\theta^k \rangle=\Ex\left[\int_0^T (z_t-\theta^k_t+w^z_t)(z_t-\theta^k_t) dt\right]\\
  =&\|z-\theta^k\|_{\Hx^2}+\Ex\left[\int_0^T w^z_t(z_t-\theta^k_t) dt\right]
  \geq\|z-\theta^k\|_{\Hx^2}.
\end{align*}
It follows that $z=\theta^k$ a.e. on $(0,T)\times\Omega$, which gives that $w^z=y$, i.e. $y_t\in N_\Theta(\theta_t^k),\ dt\times d\Px$ a.e.

Let $\mathbf{X}^{k}$ be the controlled diffusion with $\theta^k$ and $(\mathbf{p}^k,\mathbf{q}^k)$ as the solution to the associated BSDE \eqref{eq:adjoint-SDE}. In view of \eqref{eq:normal-cone-R}, the equation \eqref{eq:pre-projection} can be rewritten by
\begin{align*}
  \theta^k_t+\frac{1}{2\lambda}\tilde{\varphi}^k_t=-\frac{1}{2\lambda}\sum_{i=1}^N u_ip^{k,i}_t-\frac{1}{2\lambda k}\chi^k_t.
\end{align*}
As $\tilde{\varphi}^k_t\in N_\Theta(\theta^k_t)$, the desired result \eqref{eq:theta-k} is verified.
\end{proof}

\end{document}